\newtheorem{theorem}{Theorem}[section]
\newtheorem{lemma}[theorem]{Lemma} 
\newtheorem{proposition}[theorem]{Proposition} 
\newtheorem{corollary}[theorem]{Corollary} 
\theoremstyle{definition}
\theoremstyle{remark}
\newtheorem{remark}[theorem]{Remark}
\DeclareMathOperator{\Ima}{Im}
\DeclareMathOperator{\Ker}{Ker}
\journal{Journal of Algebra}
\begin{document}
	
	\begin{frontmatter}
		
		\title{Skew Braces and Hopf-Galois Structures\\ of Heisenberg Type}
		
		\author{Kayvan Nejabati Zenouz\footnote{Email: knejabati-zenouz@brookes.ac.uk}}
		
		\address{School of Engineering, Computing and Mathematics, Oxford Brookes University, Oxford, OX33 1HX}
		
		\begin{abstract}
			We classify all skew braces of Heisenberg type for a prime number $ p>3 $. Furthermore, we determine the automorphism group of each one of these skew braces (as well as their socle and annihilator). Hence, by utilising a link between skew braces and Hopf-Galois theory, we can determine all Hopf-Galois structures of Heisenberg type on Galois field extensions of fields of degree $ p^{3} $.
		\end{abstract}	
		\begin{keyword}
			Skew braces; Hopf-Galois structures; Heisenberg group; field extensions; the Yang-Baxter equation	
		\end{keyword}	
	\end{frontmatter}
	\tableofcontents{}
	\section{Introduction}\label{S1}
	Braces were introduced by W. Rump \cite{MR2278047}, as a generalisation of radical rings, in order to study the non-degenerate involutive set-theoretic solutions of the quantum Yang-Baxter equation. He also obtained a correspondence between these solutions and braces. Later, through the efforts of D. Bachiller, F. Ced\'o, E. Jespers, and J. Okni\'nski \cite{MR3177933, MR3527540} the classification of these solutions was reduced to that of braces, and they provided many new classes of these solutions. Recently, skew braces were introduced by L. Guarnieri and L. Vendramin \cite{MR3647970} in order to study the non-degenerate (not necessarily involutive) set-theoretic solutions, and in a subsequent paper the connection of skew braces to ring theory and Hopf-Galois theory was studied by  N. Byott, A. Smoktunowicz, and L. Vendramin \cite{MR3763907}.
	
	On the other hand, in 1969 S. Chase and M. Sweedler \cite{MR0260724} introduced the concept of Hopf-Galois extensions in order to generalise the classical Galois theory. Later, Hopf-Galois theory for separable extensions of fields was studied by C. Greither and B. Pareigis \cite{MR878476}. They showed how to recast the problem of classifying all Hopf-Galois structures on a finite separable extension of fields as a problem in group theory. Many advances relating to the classification of Hopf-Galois structures were made by A. Alabadi, N. Byott \cite{MR1402555, MR2030805, MR2363137, MR3715201}, S. Carnahan, L. Childs \cite{MR1704676}, and T. Kohl \cite{MR1644203}. Recently, some properties of Hopf-Galois structures on a separable field extension of degree $ p^{n} $ were investigated by T. Crespo and M. Salguero \cite{CS}. 
	
	Recently, a fruitful discovery, which was initially noticed by D. Bachiller, revealed a connection between Hopf-Galois theory and skew braces, which linked the classification of Hopf-Galois structures to that of skew braces. 
	
	Despite many efforts both the classification of skew braces and Hopf-Galois structures remain widely open. For example, in \cite{MR2298848} cyclic braces were classified, and in \cite{MR3320237} braces of order $ p^{3} $ were classified. Recently, in \cite{doi:10.1142/S0219498819500336} a method for describing skew braces with non-trivial annihilator was given, and braces of order $ p^{2}q $ have been studied in \cite{CD}. The classification and understanding the structure of skew braces has become more important as they find connections to other areas, for example to concepts in ring theory, see \cite{KSV}, and quantum information \cite{SS}, as well as number theory. Recently, a list of open problems on skew braces has been posed by L. Vendramin \cite{LV}.
	
	To this end, in the author's PhD thesis \cite{KNZ}, an explicit and complete classification of skew braces and Hopf-Galois structures of order $ p^{3} $ for a prime number $ p $ was provided using methods of Hopf-Galois theory. In particular, we independently reproved the results of \cite{MR3320237} on braces of order $ p^{3} $. In this paper, as our main results, we provide a classification for skew braces and Hopf-Galois structures of Heisenberg type for a prime $ p $, which we have chosen to be greater than $ 3 $ for simplicity. However, our methods can be adapted for $ p=2,3 $ as well ($ p=2,3 $ has been treated in the author's PhD thesis). We classify these skew braces and Hopf-Galois structures using some methods of N. Byott \cite{MR2030805} and by conducting a deep study into the holomorph of the Heisenberg group.
	
	Furthermore, we determine the automorphism group of each skew brace that we classify, and as a result we are able to determine the Hopf-Galois structures of Heisenberg type on Galois field extensions of degree $ p^{3} $. In our subsequent two papers we aim to provide our findings relating to the classification of skew braces and Hopf-Galois structures of Extraspecial type (of the type $ C_{p^{2}}\rtimes C_{p} $) in one paper, and skew braces and Hopf-Galois structures of type $ C_{p}^{3} $ in the second paper. These results are currently in the author's PhD thesis \cite{KNZ} Sections $ 4.2 $, $ 4.3 $, and $ 4.5 $. 
	
	We shall begin by providing relevant background information and stating a summary of our main results in the rest of this section. The subsequent sections are devoted to the proofs of our results, and at the end of Section \ref{S4} there is a list of all skew braces classified in this paper. We also determine the \textit{socle} and \textit{annihilator} of these skew braces and show that there are non-trivial skew braces of Heisenberg type with trivial socle and annihilator, so these cannot be described by methods of \cite{doi:10.1142/S0219498819500336}.  
	\subsection{Background}
	A \textit{skew (left) brace} \cite[cf.][]{MR3763907} is a triple $ \left(B,\oplus,\odot\right) $ which consists of a set $ B $ together with two operations $ \oplus $ and $ \odot $ such that $ (B,\oplus) $ and $ (B,\odot) $ are groups (they need not be abelian), and the two operations are related by the \textit{skew brace property}:
	\[a\odot\left(b\oplus c\right)=  \left(a\odot b\right)\ominus a\oplus \left(a\odot c\right) \ \text{for every} \ a,b,c \in B, \]  
	where $ \ominus a $ is the inverse of $ a $ with respect to the operation $ \oplus $. The group $ \left(B,\oplus\right) $ is known as the \textit{additive group} of the skew brace $ \left(B,\oplus,\odot\right) $ and $ \left(B,\odot\right) $ as the \textit{multiplicative group}. A morphism, or a map, between two skew braces  \[ \varphi : \left(B_{1},\oplus_{1},\odot_{1}\right) \longrightarrow  \left(B_{2},\oplus_{2},\odot_{2}\right) \] is a map of sets $ \varphi: B_{1} \longrightarrow B_{2} $ such that the maps 
	\[\varphi : \left(B_{1},\oplus_{1}\right) \longrightarrow  (B_{2},\oplus_{2}) \ \text{and} \ \varphi : \left(B_{1},\odot_{1}\right) \longrightarrow  \left(B_{2},\odot_{2}\right) \]
	are group homomorphisms; the map $ \varphi $ is an isomorphism if it is a bijection. 
	
	We call a skew brace $ \left(B,\oplus,\odot\right) $ such that $ \left(B,\oplus\right) \cong N $ and $ \left(B,\odot\right) \cong G $ a $ G $-skew brace of \textbf{type} $ N $; we refer to the \textit{isomorphism type} of $ \left(B,\odot\right) $ as the \textbf{structure} of the skew brace $ \left(B,\oplus,\odot\right) $.  If $ \oplus $ is abelian, nonabelian respectively, we call $ \left(B,\oplus,\odot\right) $ a skew brace of abelian, nonabelian type respectively. We note that a skew brace of abelian type coincides with the one that was initially defined by W. Rump called a brace (aka a classical brace). Skew braces provide non-degenerate (not necessarily involutive) set-theoretic solutions of the quantum Yang-Baxter equation. The paper of A. Smoktunowicz, and L. Vendramin (also N. Byott)  \cite{MR3763907} provides an excellent introduction to skew braces and their connection to noncommutative algebra, mathematical physics, and other areas. 
	
	Next we recall some definitions and facts relating to Hopf-Galois structures and their connection to skew braces. For $ L/K $ a finite Galois extension of fields with Galois group $ G $, A \textit{Hopf-Galois structure} on $ L/K $ consists of a finite dimensional cocommutative $ K $-\textit{Hopf algebra} $ H $, with an action on $ L $, which makes $ L $ into an $ H $-\textit{Galois extension}, i.e., $ H $ acts on $ L $ in such way that the $ K $-module homomorphism
	\begin{align*}
	j:L\otimes_{K} H \longrightarrow \mathrm{End}_{K}(L) \ \text{given by} \ j(x\otimes y)(z)=xy(z) \ \text{for} \ x,z \in L, y\in H
	\end{align*}
	is an isomorphism. For example, the group algebra $ K[G] $ endows $ L/K $ with the \textit{classical Hopf-Galois structure}. However, in general there can be more than one Hopf-Galois structure on $ L/K $. Hopf-Galois structures have applications in Galois module theory; for example, when studying the freeness of rings of integers of extensions of global or local fields as modules (e.g., see \cite{MR1879021}). In 1987, the classification of Hopf-Galois structures was reduced to a group theoretic problem by C. Greither and B. Pareigis \cite{MR878476} via the following theorem.
	\begin{theorem}[C. Greither and B. Pareigis]\label{T1}
		Hopf-Galois structures on $ L/K $ correspond bijectively to regular subgroups $ N \subseteq \mathrm{Perm}(G) $ which are normalised by the image of $ G $, as left translations, inside $ \mathrm{Perm}(G) $. 
	\end{theorem}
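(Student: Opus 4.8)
The plan is to prove Theorem \ref{T1} by Galois descent, passing from $K$ up to $L$, where the situation becomes ``split'' and cocommutative Hopf algebras turn into group algebras. First I would base change along $L/K$: given a finite-dimensional cocommutative $K$-Hopf algebra $H$ acting on $L$, form $\tilde{H} := L\otimes_K H$, a cocommutative $L$-Hopf algebra carrying the semilinear action of $G=\mathrm{Gal}(L/K)$ on the left tensor factor. By faithfully flat descent, $H\mapsto\tilde{H}$ is an equivalence between $K$-Hopf algebras and $L$-Hopf algebras equipped with a semilinear $G$-action, with quasi-inverse $\tilde{H}\mapsto\tilde{H}^{G}$. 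The action of $H$ on $L$ base changes to an action of $\tilde{H}$ on $L\otimes_K L$, and this extra descent datum must be carried along throughout.

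Next I would use the canonical $L$-algebra isomorphism $L\otimes_K L \xrightarrow{\;\sim\;} \mathrm{Map}(G,L)=:\mathcal{E}$, $x\otimes y\mapsto\bigl(g\mapsto x\,g(y)\bigr)$, which is an isomorphism precisely because $L/K$ is Galois. Under it the Galois action on the left factor becomes the action of $G$ on $\mathcal{E}$ given by $(h\cdot f)(g)=h\bigl(f(h^{-1}g)\bigr)$, and the key observation is that $\mathcal{E}\cong L^{\,|G|}$ as an $L$-algebra, so $\mathrm{Aut}_{L\text{-alg}}(\mathcal{E})=\mathrm{Perm}(G)$. I would then show that an $L$-Hopf--Galois structure on $\mathcal{E}/L$ amounts to exactly a regular subgroup $N\subseteq\mathrm{Perm}(G)$: the grouplike elements of $\tilde{H}$ form a group $N$ acting on $\mathcal{E}$ by $L$-algebra automorphisms, hence $N\hookrightarrow\mathrm{Perm}(G)$; the base-changed Hopf--Galois isomorphism $j$ unwinds to the statement that this action of $N$ on $G$ is simply transitive; and cocommutativity together with a dimension count forces $\tilde{H}=L[N]$.

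It then remains to translate the descent datum. I would check that a semilinear $G$-action on $\tilde{H}=L[N]$ compatible with the Galois action on $\mathcal{E}$ must permute the grouplikes $N$, and that the permutation of $N$ induced by $g\in G$ is exactly conjugation by the left translation $\lambda(g)\in\mathrm{Perm}(G)$, where $\lambda(g)(h)=gh$; conversely, any regular $N$ normalised by $\lambda(G)$ carries such a datum, and $H:=L[N]^{G}$ (fixed points under the twisted semilinear action $g\cdot n=\lambda(g)n\lambda(g)^{-1}$) is a $K$-Hopf algebra giving a Hopf--Galois structure on $L/K$. Finally I would verify that these two constructions are mutually inverse and respect isomorphism, yielding the claimed bijection. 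The main obstacle is this last step: pinning down precisely how the descent datum on $L[N]$ interacts with the action on $\mathcal{E}$ so as to extract the normalisation condition $\lambda(g)N\lambda(g)^{-1}=N$. This requires careful bookkeeping of the two commuting $G$-actions coming from the two tensor factors of $L\otimes_K L$ and of how the grouplikes are permuted, and it is here that essentially all the content of the theorem is concentrated.
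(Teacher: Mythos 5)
The paper does not prove Theorem \ref{T1}; it is quoted from Greither--Pareigis \cite{MR878476}, so your proposal can only be measured against the standard proof in the literature. Your overall strategy is exactly that standard proof: descend along $L/K$, identify $L\otimes_K L$ with $\mathrm{Map}(G,L)$, recognise the Hopf algebra upstairs as a group algebra $L[N]$ with $N$ regular in $\mathrm{Perm}(G)$, and translate the descent datum into the condition that $N$ be normalised by the left translations $\lambda(G)$. The bookkeeping of the two $G$-actions and the identification of the induced permutation of grouplikes with conjugation by $\lambda(g)$ are all correct as sketched.

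There is, however, one genuine gap, at the step you dispose of most quickly: the claim that ``cocommutativity together with a dimension count forces $\tilde{H}=L[N]$.'' Cocommutativity of a finite-dimensional Hopf algebra over a field $L$ that is not algebraically closed (or has positive characteristic) does not imply that it is spanned by grouplikes: the dual $\tilde{H}^{*}$ is then a commutative algebra, but it could be a non-split \'etale algebra (a product of nontrivial separable field extensions of $L$), or non-\'etale in characteristic $p$ (e.g.\ a restricted enveloping algebra is cocommutative of the right dimension but has only one grouplike). So a priori your group $N$ of grouplikes could be too small, and then the assertion that ``$j$ unwinds to simple transitivity of $N$ on $G$'' is circular --- $j$ involves all of $\tilde{H}$, and you can only read off regularity of $N$ after you know $N$ spans $\tilde{H}$. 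The missing ingredient is the Hopf--Galois (torsor) condition itself: dualising $j$ gives an isomorphism $\mathcal{E}\otimes_{L}\mathcal{E}\cong\mathcal{E}\otimes_{L}\tilde{H}^{*}$ of $\mathcal{E}$-algebras, and specialising along the $L$-rational point of $\mathrm{Spec}(\mathcal{E})$ given by evaluation at $1_{G}$ yields $\tilde{H}^{*}\cong\mathcal{E}\cong L^{\lvert G\rvert}$ as $L$-algebras. Only then does it follow that $\tilde{H}$ has exactly $\lvert G\rvert=\dim_{L}\tilde{H}$ grouplikes, hence $\tilde{H}=L[N]$, after which regularity of $N$ on $G$ is the dimension count you invoke. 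With this lemma supplied (it is Greither--Pareigis's key technical step), the rest of your outline goes through.
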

	In particular, every $ K $-Hopf algebra $ H $ which endows $ L/K $ with a Hopf-Galois structure is of the form $ L[N]^{G} $ for some $ N \subseteq \mathrm{Perm}(G) $ a regular subgroup normalised by the image of $ G $, as left translations, inside $ \mathrm{Perm}(G) $. Here $ G $ acts on the group algebra $ L[N] $ through its action on $ L $ as field automorphism and on $ N $ by conjugation inside $ \mathrm{Perm}(G) $. Subsequently, the \textit{isomorphism type} of $ N $ became known as the \textbf{type} of the Hopf-Galois structure, and we shall refer to the cardinality of $ N $, which is the same as the degree of the extension $ L/K $, as the \textbf{order} of the Hopf-Galois structure.
	
	The connection between Hopf-Galois structures and braces was initially noticed by D. Bachiller, later this connection was made more explicit by N. Byott and L. Vendramin in \cite{MR3763907}. For example, one can prove (see Section \ref{S2}) that given a $ G $-skew brace $ \left(B,\oplus,\odot\right) $, the map 
	\begin{align*}
	d: \left(B,\oplus\right)& \longmapsto \mathrm{Perm}\left(B,\odot\right)\\
	a&\longmapsto \left(d_{a}: b\longmapsto a\oplus b\right) \ \text{for all} \ a,b \in B
	\end{align*}  
	is a regular embedding, i.e., $ d $ is an injective map whose image $ \Ima d $ is a regular subgroup. In particular, $ \Ima d $ is normalised by the image of $ \left(B,\odot\right) $ in $ \mathrm{Perm}\left(B,\odot\right) $. This together with Theorem \ref{T1} enables us to obtain a Hopf-Galois structure on $ L/K $. Conversely, one always obtains a skew brace from a Hopf-Galois structure. However, there are more Hopf-Galois structures than skew braces, in particular skew braces parametrise Hopf-Galois structures.     
	
	Finally, we remark that since working with $ \mathrm{Perm}(G) $ can often be difficult, as it becomes rapidly large as size of $ G $ increases, in order to overcome this, N. Byott \cite{MR1402555} proves the following statement -- here L. Childs reformulation cf. \cite[p.~57, (7.3) Theorem (Byott)]{MR1767499} is given.
	\begin{theorem}[N. Byott]\label{THG2}
		Let $ N $ be a group. Then there is a bijection between the sets \[\mathcal{N}\stackrel{\mathrm{def}}{=} \left\{\alpha:N\hookrightarrow \mathrm{Perm}(G)\mid \alpha(N) \ \mathrm{is \ regular\ on}\ G\right\} \  \text{and} \] 
		\[\mathcal{G}\stackrel{\mathrm{def}}{=}\left\{\beta:G \hookrightarrow \mathrm{Perm}(N)\mid  \beta(G) \ \mathrm{is\ regular\ on} \ N \right\}.\]
		Under this bijection, if $ \alpha,\alpha'\in \mathcal{N} $ correspond to $ \beta,\beta' \in \mathcal{G} $, then $ \alpha(N)=\alpha'(N) $ if and only if $ \beta(G) $ and $ \beta'(G) $ are conjugate by an element of $ \mathrm{Aut}(N) $. Furthermore, $ \alpha(N) $ is normalised by the left translation if and only if $ \beta(G) $ is contained in $ \mathrm{Hol}(N) $. 
	\end{theorem}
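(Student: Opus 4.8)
The plan is to exhibit both $ \mathcal{N} $ and $ \mathcal{G} $ as describing the same datum --- a simply transitive action --- seen from two sides, and to move between them by inverting an orbit bijection. Fix identity elements $ e_{G}\in G $, $ e_{N}\in N $, and for any group $ \Gamma $ write $ \lambda_{\Gamma}\colon\Gamma\hookrightarrow\mathrm{Perm}(\Gamma) $ for the left regular representation; recall that $ \mathrm{Hol}(\Gamma) $ is exactly the normaliser of $ \lambda_{\Gamma}(\Gamma) $ in $ \mathrm{Perm}(\Gamma) $, and that the centraliser of $ \lambda_{\Gamma}(\Gamma) $ in $ \mathrm{Perm}(\Gamma) $ is the image of the right regular representation. (If $ |N|\neq|G| $ both sets are empty, so assume $ |N|=|G| $.) Given $ \alpha\in\mathcal{N} $, regularity of $ \alpha(N) $ on $ G $ says precisely that the evaluation map $ \phi_{\alpha}\colon N\to G $, $ n\mapsto\alpha(n)(e_{G}) $, is a bijection, and clearly $ \phi_{\alpha}(e_{N})=e_{G} $. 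A one-line check yields the \emph{key identity} $ \alpha(n)=\phi_{\alpha}\circ\lambda_{N}(n)\circ\phi_{\alpha}^{-1} $ for all $ n\in N $: that is, $ \alpha $ is the $ \phi_{\alpha} $-conjugate of $ \lambda_{N} $, so $ \phi_{\alpha} $ identifies the $ N $-set $ G $ with $ N $ acting on itself by left translation.

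With this in hand I would define $ \Phi\colon\mathcal{N}\to\mathcal{G} $ by $ \Phi(\alpha)=\beta_{\alpha} $, where $ \beta_{\alpha}(g)\stackrel{\mathrm{def}}{=}\phi_{\alpha}^{-1}\circ\lambda_{G}(g)\circ\phi_{\alpha}\in\mathrm{Perm}(N) $. Being a conjugate of $ \lambda_{G} $ by a bijection of the underlying sets, $ \beta_{\alpha} $ is an injective homomorphism whose image $ \beta_{\alpha}(G)=\phi_{\alpha}^{-1}\lambda_{G}(G)\phi_{\alpha} $ is regular on $ N $; hence $ \beta_{\alpha}\in\mathcal{G} $. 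Interchanging the roles of $ G $ and $ N $ gives a map $ \Psi\colon\mathcal{G}\to\mathcal{N} $, $ \Psi(\beta)=\alpha_{\beta} $, with $ \alpha_{\beta}(n)=\phi_{\beta}^{-1}\circ\lambda_{N}(n)\circ\phi_{\beta} $ and $ \phi_{\beta}(g)=\beta(g)(e_{N}) $. Evaluating $ \beta_{\alpha}(g) $ at $ e_{N} $ shows $ \phi_{\beta_{\alpha}}=\phi_{\alpha}^{-1} $, and then the key identity gives $ \alpha_{\beta_{\alpha}}=\alpha $; symmetrically $ \beta_{\alpha_{\beta}}=\beta $. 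So $ \Phi $ and $ \Psi $ are mutually inverse, establishing the bijection.

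It then remains to track the two compatibilities. For the first, $ \alpha(N)=\alpha'(N) $ if and only if $ \alpha'=\alpha\circ\sigma $ for some $ \sigma\in\mathrm{Aut}(N) $; in that case $ \phi_{\alpha'}=\phi_{\alpha}\circ\sigma $, and substituting into the definition gives $ \beta_{\alpha'}(g)=\sigma^{-1}\circ\beta_{\alpha}(g)\circ\sigma $ for every $ g\in G $, i.e. $ \beta_{\alpha'} $ is the conjugate of $ \beta_{\alpha} $ by $ \sigma\in\mathrm{Aut}(N)\subseteq\mathrm{Perm}(N) $. Conversely, if $ \beta_{\alpha'}(g)=\sigma^{-1}\beta_{\alpha}(g)\sigma $ for all $ g $ with $ \sigma\in\mathrm{Aut}(N) $, then comparing with $ \beta_{\alpha'}(g)=\phi_{\alpha'}^{-1}\lambda_{G}(g)\phi_{\alpha'} $ shows that $ (\phi_{\alpha}\circ\sigma)\circ\phi_{\alpha'}^{-1} $ centralises $ \lambda_{G}(G) $ and hence is a right translation; evaluating this permutation at $ e_{G} $ forces it to be the identity, so $ \phi_{\alpha'}=\phi_{\alpha}\circ\sigma $, whence $ \alpha'=\alpha\circ\sigma $ and $ \alpha(N)=\alpha'(N) $. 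For the second, since $ \phi_{\alpha}\lambda_{N}(N)\phi_{\alpha}^{-1}=\alpha(N) $ by the key identity and $ \beta_{\alpha}(G)=\phi_{\alpha}^{-1}\lambda_{G}(G)\phi_{\alpha} $, the subgroup $ \beta_{\alpha}(G) $ normalises $ \lambda_{N}(N) $ --- that is, lies in $ \mathrm{Hol}(N) $ --- if and only if $ \lambda_{G}(G) $ normalises $ \phi_{\alpha}\lambda_{N}(N)\phi_{\alpha}^{-1}=\alpha(N) $, i.e. if and only if $ \alpha(N) $ is normalised by the left translations.

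I do not expect a serious obstacle here: everything reduces to the single observation that a regular subgroup becomes the left regular representation once its orbit map is used to identify the set with the group, together with the standard descriptions of the normaliser and centraliser of that representation. The one point demanding care is the converse in the image statement, where one must exploit evaluation at the identity to eliminate the right-translation ambiguity and obtain the genuine equality $ \alpha(N)=\alpha'(N) $ rather than merely conjugacy by $ \mathrm{Aut}(G) $; beyond that, the only real work is keeping the directions of all the conjugations straight.
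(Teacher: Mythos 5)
The paper offers no proof of this statement --- it is quoted from Byott via Childs' reformulation --- so your proposal has to stand on its own. Most of it does: the key identity $\alpha(n)=\phi_{\alpha}\circ\lambda_{N}(n)\circ\phi_{\alpha}^{-1}$, the construction of $\Phi$ and $\Psi$ and the check $\phi_{\beta_{\alpha}}=\phi_{\alpha}^{-1}$ showing they are mutually inverse, the forward implication of the conjugacy statement, and the holomorph criterion (using that $\mathrm{Hol}(N)$ is the normaliser of $\lambda_{N}(N)$ in $\mathrm{Perm}(N)$) are all correct and are the standard argument.

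The genuine gap is in the converse of the conjugacy statement, exactly the point you flagged as delicate. Your hypothesis there is ``$\beta_{\alpha'}(g)=\sigma^{-1}\beta_{\alpha}(g)\sigma$ for all $g$'', i.e.\ that the \emph{embeddings} are conjugate by $\sigma\in\mathrm{Aut}(N)$; but the theorem only assumes that the \emph{images} are conjugate as subgroups, $\beta_{\alpha'}(G)=\sigma^{-1}\beta_{\alpha}(G)\sigma$. The latter gives only $\beta_{\alpha'}(g)=\sigma^{-1}\beta_{\alpha}(\theta(g))\sigma$ for some $\theta\in\mathrm{Aut}(G)$, hence $\phi_{\beta_{\alpha'}}=\sigma^{-1}\circ\phi_{\beta_{\alpha}}\circ\theta$, and running your key identity yields $\alpha'(N)=\theta^{-1}\alpha(N)\theta$ --- conjugacy by $\mathrm{Aut}(G)$ inside $\mathrm{Perm}(G)$, not equality. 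The evaluation-at-$e_{G}$ trick kills the right-translation ambiguity but cannot kill $\theta$, and this residue is not removable by any rearrangement of the argument: already with $\sigma=1$, two embeddings with the \emph{same} image pull back to $\alpha$'s with different images. For instance, with $G=N=C_{5}$ and $\beta(1)=(0\,1\,2\,4\,3)$, $\beta'(1)=(0\,1\,2\,4\,3)^{2}$, one computes $\phi_{\beta}=(3\,4)$, $\phi_{\beta'}=(1\,2\,3)$, so the corresponding $\alpha,\alpha'$ have images $\left\langle(0\,1\,2\,4\,3)\right\rangle$ and $\left\langle(0\,3\,1\,2\,4)\right\rangle$, which are distinct. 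What your argument genuinely establishes --- and what every application in this paper actually uses (the counting formula and Proposition \ref{P2} work with $\mathrm{Aut}(G)$-orbits of the $\alpha(N)$ against $\mathrm{Aut}(N)$-orbits of the $\beta(G)$) --- is the embedding-level statement: $\alpha(N)=\alpha'(N)$ if and only if $\beta'=C_{\sigma}\circ\beta$ for some $\sigma\in\mathrm{Aut}(N)$. You should either prove that version or strengthen the hypothesis in the converse; as written, the subgroup-level ``if'' direction does not follow from your argument.
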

	Using Theorem \ref{THG2}, N. Byott shows that if $ e'(G,N) $ is the number of regular subgroups of $ \mathrm{Hol}(N) $ isomorphic to $ G $, then the number of Hopf-Galois structures on $ L/K $ of type $ N $ is given by 
	\begin{align}\label{E106}
	e(G,N)=\frac{\left\lvert\mathrm{Aut}(G)\right\rvert}{\left\lvert\mathrm{Aut}(N)\right\rvert}e'(G,N).
	\end{align}
	In the author's thesis \cite{KNZ} we used formula (\ref{E106}) to find the number of Hopf-Galois structures, but in the current paper we parametrise Hopf-Galois structures along skew braces and count them using the orbit stabiliser theorem (we obtain the same results, but in the process we additionally find the automorphism groups of our skew braces too).  
 	  
	\subsection{Summary of the main results}
	We give a summary of our main results in this subsection. For the rest of this paper we shall assume $ p>3 $ is a prime number. We shall denote by $ C_{p^{r}} $ the cyclic group of order $ p^{r} $ for any natural number $ r $. 
	
	Recall there are two nonabelian groups of order $ p^{3} $: the exponent $ p $ nonabelian group of order $ p^{3} $, or otherwise known as the Heisenberg group,
	\[   M_{1} \stackrel{\mathrm{def}}{=}\left\langle \rho,\sigma, \tau \mid \rho^{p}=\sigma^{p}=\tau^{p}=1,  \ \sigma\rho=\rho\sigma , \ \tau\rho=\rho\tau, \ \tau\sigma=\rho\sigma\tau \right\rangle \cong C_{p}^{2}\rtimes C_{p}, \] and the exponent $ p^{2} $ nonabelian group of order $ p^{3} $, or otherwise known as the Extraspecial group of order $ p^{3} $, 
	\[   M_{2} \stackrel{\mathrm{def}}{=}\left\langle \sigma, \tau \mid \sigma^{p^{2}}=\tau^{p}=1, \ \tau\sigma=\sigma^{p+1}\tau\right\rangle\cong C_{p^{2}}\rtimes C_{p}. \]   
	In this paper we are concerned with $ M_{1} $. We fix as our type the group $ M_{1} $ and find all skew braces and Hopf-Galois structures of type $ M_{1} $.  The main results of this paper can be summarised as follows.
	\begin{theorem}\label{T2}
		The skew braces of $ M_{1} $ type are precisely  \[ 2p^{2}-p+3 \] $ M_{1} $-braces and \[  2p+1 \]  $ C_{p}^{3} $-braces.
	\end{theorem}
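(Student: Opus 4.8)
The plan is to recast Theorem~\ref{T2} as a classification of regular subgroups of $\mathrm{Hol}(M_{1})$ and then to prove that classification by a hands-on dissection of this holomorph. By the $d$-embedding recalled just before Theorem~\ref{THG2}, together with the skew brace / holomorph correspondence set up in Section~\ref{S2}, the isomorphism classes of skew braces $(B,\oplus,\odot)$ with $(B,\oplus)\cong M_{1}$ are in bijection with the $\mathrm{Aut}(M_{1})$-conjugacy classes of regular subgroups $G\subseteq\mathrm{Hol}(M_{1})$, and under this bijection the \emph{structure} of the skew brace (the isomorphism type of $(B,\odot)$) coincides with the abstract isomorphism type of $G$. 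So it suffices to list the $\mathrm{Aut}(M_{1})$-classes of regular subgroups of $\mathrm{Hol}(M_{1})$ and to sort them by isomorphism type.

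First I would compute $\mathrm{Aut}(M_{1})$ explicitly. The central series $1\subseteq\langle\rho\rangle\subseteq M_{1}$ gives a split short exact sequence $1\to\mathrm{Hom}(M_{1}/\langle\rho\rangle,\langle\rho\rangle)\to\mathrm{Aut}(M_{1})\to\mathrm{GL}_{2}(\mathbb{F}_{p})\to 1$ in which $\mathrm{GL}_{2}(\mathbb{F}_{p})$ acts on the centre $\langle\rho\rangle$ through the determinant (since $\rho=[\tau,\sigma]$); hence $\lvert\mathrm{Aut}(M_{1})\rvert=p^{3}(p-1)^{2}(p+1)$ and $\lvert\mathrm{Hol}(M_{1})\rvert=p^{6}(p-1)^{2}(p+1)$, and a Sylow $p$-subgroup $S$ of $\mathrm{Aut}(M_{1})$ — the preimage of a unipotent $C_{p}\subseteq\mathrm{GL}_{2}(\mathbb{F}_{p})$ — is itself $\cong M_{1}$, so has exponent $p$. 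Every regular subgroup $G$ has order $p^{3}$, hence is a $p$-group, so after $\mathrm{Hol}(M_{1})$-conjugation we may take $G$ inside a fixed Sylow $p$-subgroup $P$ of $\mathrm{Hol}(M_{1})$, of order $p^{6}$. The task then splits into: (i) enumerate the subgroups $G\subseteq P$ of order $p^{3}$ that are regular on $M_{1}$, equivalently those with $G\cap\mathrm{Aut}(M_{1})=1$; and (ii) decide which of the resulting subgroups are $\mathrm{Aut}(M_{1})$-conjugate. Describing $P$, its relevant subgroup lattice and the $\mathrm{Aut}(M_{1})$-action on it is the ``deep study of the holomorph of the Heisenberg group'' in which the bulk of the work lies.

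For (i) I would stratify a regular $G$ by the $\lambda$-map $\lambda\colon G\to\mathrm{Aut}(M_{1})$ obtained by restricting $\mathrm{Hol}(M_{1})\to\mathrm{Aut}(M_{1})$, whose kernel is $K:=G\cap M_{1}\trianglelefteq G$. If $K=M_{1}$ then $G=M_{1}$, the trivial skew brace of structure $M_{1}$; otherwise $\lvert K\rvert\in\{1,p,p^{2}\}$, and in each case one writes $G=\langle(x_{1},\phi_{1}),(x_{2},\phi_{2}),(x_{3},\phi_{3})\rangle$, puts the triple $(\phi_{1},\phi_{2},\phi_{3})$ into a normal form inside $P\cap\mathrm{Aut}(M_{1})$, and solves the closure and regularity conditions for the $x_{i}\in M_{1}$. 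Because $S\cong M_{1}$ has exponent $p$, the power formula $(x,\phi)^{p}=\bigl(x\,\phi(x)\cdots\phi^{p-1}(x),\,\phi^{p}\bigr)$ shows every $p$-element of $\mathrm{Hol}(M_{1})$ has order at most $p^{2}$, which already rules out $G\cong C_{p^{3}}$; and carrying through the closure and regularity conditions one finds that the admissible triples force $G$ to have exponent $p$, which further excludes $C_{p^{2}}\times C_{p}$ and $M_{2}$. Hence the only structures that occur are $M_{1}$ and $C_{p}^{3}$, which is exactly why Theorem~\ref{T2} mentions only $M_{1}$-braces and $C_{p}^{3}$-braces.

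Finally, for (ii) and the count I would let $\mathrm{Aut}(M_{1})$ act on each family of solutions produced in step (i) and apply the orbit--stabiliser theorem, organising the orbits according to how $\lambda(G)$ sits relative to $\mathrm{Inn}(M_{1})\cong C_{p}^{2}$ and to the $\mathrm{GL}_{2}(\mathbb{F}_{p})$-conjugacy type of the image of $\lambda(G)$ in $\mathrm{Aut}(M_{1})/\mathrm{Inn}(M_{1})$; the residual fusion not already visible inside $P$ is controlled by $N_{\mathrm{Hol}(M_{1})}(P)/P$. Summing the orbit counts over all families yields exactly $2p^{2}-p+3$ classes of structure $M_{1}$ and $2p+1$ of structure $C_{p}^{3}$, proving the theorem. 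As by-products, the point stabilisers obtained in this step are the automorphism groups of the skew braces, and the kernels $\ker\lambda$ (with their multiplicative counterparts) yield the socles and annihilators needed for the remarks at the end of Section~\ref{S4}. I expect steps (i)--(ii) to be the genuine obstacle: keeping simultaneous control of the closure and regularity equations and of the $\mathrm{Aut}(M_{1})$-orbits, so that no isomorphism class is omitted or double-counted, is delicate, whereas computing $\mathrm{Aut}(M_{1})$ and reducing to a Sylow subgroup are routine.
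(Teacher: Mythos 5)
Your strategy is exactly the one the paper follows: pass from skew braces of type $M_{1}$ to regular subgroups of $\mathrm{Hol}(M_{1})$ up to $\mathrm{Aut}(M_{1})$-conjugacy (Proposition \ref{P2} — note this is the $m$-embedding into $\mathrm{Hol}(B,\oplus)$, not the $d$-embedding into $\mathrm{Perm}(B,\odot)$, which serves the Hopf--Galois side), compute $\mathrm{Aut}(M_{1})\cong C_{p}^{2}\rtimes\mathrm{GL}_{2}(\mathbb{F}_{p})$ of order $(p^{2}-1)(p-1)p^{3}$, observe that a regular subgroup may be moved by an element of $\mathrm{GL}_{2}(\mathbb{F}_{p})\subseteq\mathrm{Aut}(M_{1})$ into the fixed Sylow $p$-subgroup $M_{1}\rtimes\langle\alpha_{1},\alpha_{2},\alpha_{3}\rangle$, and stratify by $\lvert\varTheta(G)\rvert$, equivalently by $\lvert G\cap M_{1}\rvert$. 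Your exclusion of types other than $M_{1}$ and $C_{p}^{3}$ is also essentially the paper's: the relevant fact is identity (\ref{E110}), that every element of the Sylow $p$-subgroup of $\mathrm{Hol}(M_{1})$ has order dividing $p$ when $p>3$ (your two-stage argument, first bounding orders by $p^{2}$ and then invoking unspecified closure conditions, is weaker than needed and should be replaced by this direct exponent-$p$ computation, which is where the hypothesis $p>3$ enters).

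The genuine gap is that the proof stops where the theorem begins. The numbers $2p^{2}-p+3$ and $2p+1$ are asserted as the outcome of ``solving the closure and regularity conditions'' and ``summing the orbit counts,'' but none of that is carried out: there is no determination of which pairs $(G\cap M_{1},\varTheta(G))$ admit regular completions, no solution for the correction terms $v_{i}$, and no identification of the $\mathrm{Aut}(M_{1})$-orbits. This is the entire content of Lemmas \ref{L16}, \ref{L17} and \ref{L18} — for instance, the facts that for $\lvert\varTheta(G)\rvert=p$ one may reduce to $G\cap M_{1}=\langle\rho,\tau\rangle$ and obtains exactly $2(p-1)$ classes of structure $M_{1}$ and $2$ of structure $C_{p}^{3}$, that for $\lvert\varTheta(G)\rvert=p^{2}$ one is forced to $G\cap M_{1}=\langle\rho\rangle$ with two subcases yielding $(2p-3)p$ and $2p-1$ classes, and that for $\lvert\varTheta(G)\rvert=p^{3}$ exactly four classes survive. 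None of these conclusions is predictable from the framework alone, and your proposed bookkeeping via $N_{\mathrm{Hol}(M_{1})}(P)/P$ is not the right fusion control in any case: the equivalence on regular subgroups is conjugacy by $\mathrm{Aut}(M_{1})$, not by $\mathrm{Hol}(M_{1})$ or by the normaliser of $P$, and elements of $\mathrm{Aut}(M_{1})$ identifying two subgroups of $P$ need not normalise $P$. So the plan is sound and coincides with the paper's, but as a proof of the stated counts it is incomplete.
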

\begin{proof}
	Follows from adding the numbers found in Lemmas \ref{L16}, \ref{L17}, \ref{L18} of Section \ref{S4}, see Proposition \ref{P5}.
\end{proof}
	\begin{theorem}\label{T3}
		Let $ L/K $ be an $ M_{1} $ extension of fields. Then there are \[(2p^{3}-3p+1)p^{2}\] Hopf-Galois structures of $ M_{1} $ type. Let $ L/K $ be a $ C_{p}^{3} $ extension of fields. Then there are \[(p^{3}-1)(p^{2}+p-1)p^{2}\] Hopf-Galois structures of $ M_{1} $ type.
	\end{theorem}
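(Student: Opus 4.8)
The plan is to count Hopf-Galois structures by passing, via Theorem~\ref{THG2} (equivalently formula~(\ref{E106})), from regular subgroups of $\mathrm{Perm}(G)$ to regular subgroups of $\mathrm{Hol}(M_1)$, and then to organise the latter into $\mathrm{Aut}(M_1)$-orbits, which are exactly the isomorphism classes of skew braces of $M_1$ type, and apply the orbit--stabiliser theorem. The first step is to record the dictionary precisely: a skew brace $(B,\oplus,\odot)$ with $(B,\oplus)\cong M_1$ corresponds to a regular subgroup $G'\le\mathrm{Hol}(M_1)=M_1\rtimes\mathrm{Aut}(M_1)$ with $G'\cong(B,\odot)$; conjugation by $\mathrm{Aut}(M_1)$ sends regular subgroups to regular subgroups; two regular subgroups lie in the same orbit if and only if the associated skew braces are isomorphic; and the stabiliser of $G'$ is $N_{\mathrm{Aut}(M_1)}(G')$, which one checks equals $\mathrm{Aut}(B)$. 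The last point is the only non-formal ingredient of the framework: if $\phi\in\mathrm{Aut}(M_1)$ normalises $G'$ then, writing $g'_n$ for the unique element of $G'$ carrying the identity to $n$, one has $\phi g'_n\phi^{-1}=g'_{\phi(n)}$, so $\phi$ is simultaneously an automorphism of $(B,\oplus)$ and of $(B,\odot)$, and conversely every skew brace automorphism arises this way. Writing $e'(G,M_1)$ for the number of regular subgroups of $\mathrm{Hol}(M_1)$ isomorphic to $G$, the orbit--stabiliser theorem then gives
\[ e'(G,M_1)=\sum_{B}\frac{|\mathrm{Aut}(M_1)|}{|\mathrm{Aut}(B)|}, \]
the sum over isomorphism classes $B$ of skew braces with additive group $M_1$ and multiplicative group $G$; substituting into~(\ref{E106}) yields the master formula
\[ e(G,M_1)=|\mathrm{Aut}(G)|\sum_{B}\frac{1}{|\mathrm{Aut}(B)|}. \]

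Next I would specialise to the two cases of the statement. For $G=M_1$ the relevant $B$ are the $2p^2-p+3$ skew braces of $M_1$ type with multiplicative group $M_1$ furnished by Theorem~\ref{T2}, and for $G=C_p^3$ they are the $2p+1$ skew braces of $M_1$ type with multiplicative group $C_p^3$; in either case the classification of Section~\ref{S4} (Lemmas~\ref{L16}, \ref{L17}, \ref{L18}), together with the determination of the automorphism group of each such skew brace carried out there, supplies every order $|\mathrm{Aut}(B)|$ entering the sum. One also records the standard values $|\mathrm{Aut}(M_1)|=p^3(p-1)^2(p+1)$ and $|\mathrm{Aut}(C_p^3)|=|\mathrm{GL}_3(\mathbb{F}_p)|=p^3(p-1)^3(p+1)(p^2+p+1)$.

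It then remains to do the arithmetic. For $G=M_1$ one checks from the table of automorphism-group orders that $\sum_{B}1/|\mathrm{Aut}(B)|=(2p^2+2p-1)/(p^3-p)$, whence $e(M_1,M_1)=p^3(p-1)^2(p+1)\cdot\frac{2p^2+2p-1}{p^3-p}=p^2(p-1)(2p^2+2p-1)=(2p^3-3p+1)p^2$. For $G=C_p^3$ one checks that $\sum_{B}1/|\mathrm{Aut}(B)|=(p^2+p-1)/\big(p(p-1)^2(p+1)\big)$, whence $e(C_p^3,M_1)=|\mathrm{GL}_3(\mathbb{F}_p)|\cdot\frac{p^2+p-1}{p(p-1)^2(p+1)}=p^2(p-1)(p^2+p+1)(p^2+p-1)=(p^3-1)(p^2+p-1)p^2$, using $(p-1)(p^2+p+1)=p^3-1$.

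I expect the genuine difficulty to lie entirely upstream of this theorem. It is not the orbit--stabiliser bookkeeping but the explicit determination of $\mathrm{Aut}(B)$ for each of the $\approx 2p^2$ skew braces — equivalently, for each regular subgroup $G'\le\mathrm{Hol}(M_1)$, pinning down exactly which automorphisms of $M_1$ normalise $G'$ — that is the hard part, and this is precisely what the detailed analysis of the holomorph of the Heisenberg group in the preceding sections is for. Once that table of automorphism-group orders is available, Theorem~\ref{T3} follows in one line from the master formula, and the two displayed counts are a finite simplification of a rational function of $p$.
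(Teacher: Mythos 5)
Your proposal is correct and follows essentially the same route as the paper: the master formula $e(G,M_1)=\sum_{B}\lvert\mathrm{Aut}(G)\rvert/\lvert\mathrm{Aut}_{\mathcal{B}r}(B)\rvert$ is exactly the paper's formula (\ref{E1}), your identification of the stabiliser of a regular subgroup of $\mathrm{Hol}(M_1)$ with the skew brace automorphism group is the paper's (\ref{E2}), and the theorem is then reduced, as in the paper, to the classification and automorphism-group computations of Lemmas \ref{L16H}, \ref{L17H}, \ref{L18} plus arithmetic (which your closing simplifications carry out correctly). The only cosmetic difference is that you reach (\ref{E1}) via Byott's translation formula (\ref{E106}) rather than directly from the Greither--Pareigis picture; both derivations appear in Section \ref{S2}.
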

\begin{proof}
	Follows from adding the numbers found in Lemmas \ref{L16H}, \ref{L17H}, \ref{L18} of Section \ref{S4} see Proposition \ref{P5}.
\end{proof}
\section{Preliminaries}\label{S2}
In this section we provide some preliminaries and describe our strategy for classifying skew braces and Hopf-Galois structures. Unless otherwise stated we shall always assume $ G $ and $ N $ are finite groups.
\subsection{Skew braces and Hopf-Galois structures}\label{SB1}
The following proposition provides and explicit connection between Hopf-Galois structures and skew braces (where ideas of the proof are similar to \cite[Proposition A.3]{MR3763907}).
\begin{proposition}[Skew braces and Hopf-Galois structures correspondence]\label{P1}
	There exists a bijective correspondence between isomorphism classes of $ G $-skew braces and classes of Hopf-Galois structures on an extension $ L/K $ with Galois group $ G $, where we identify two Hopf algebras $ L[N_{1}]^{G} $ and $ L[N_{2}]^{G} $ giving Hopf-Galois structures (as in Theorem \ref{T1}) on $ L/K $ if $ N_{2}=\alpha N_{1}\alpha^{-1} $ for some $ \alpha \in \mathrm{Aut}(G) $.
\end{proposition}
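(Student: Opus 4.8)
The plan is to deduce the statement from the Greither--Pareigis correspondence (Theorem~\ref{T1}) once a clean dictionary is set up between $G$-skew braces up to isomorphism and $\mathrm{Aut}(G)$-conjugacy classes of regular subgroups of $\mathrm{Perm}(G)$ normalised by the subgroup $\lambda(G)\subseteq\mathrm{Perm}(G)$ of left translations $\lambda_{g}\colon x\mapsto g\odot x$. Since transport of structure along any group isomorphism $(B,\odot)\cong G$ turns an arbitrary $G$-skew brace $(B,\oplus,\odot)$ into an isomorphic one whose multiplicative group $(G,\odot)$ is literally $G$, it suffices to treat skew braces of the form $(G,\oplus,\odot)$. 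As a preliminary I would record the elementary fact that in every skew brace the additive identity coincides with the multiplicative identity, which lets me use the common identity $e$ of $G$ as a canonical base point throughout.

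For the dictionary, in one direction the regular embedding $d$ of Section~\ref{S2} attaches to an operation $\oplus$ for which $(G,\oplus,\odot)$ is a skew brace the regular subgroup $\Ima d\subseteq\mathrm{Perm}(G)$, which by the discussion in the excerpt is normalised by $\lambda(G)$ and satisfies $\Ima d\cong(G,\oplus)$; hence the associated Hopf-Galois structure has type $(B,\oplus)$. In the other direction, given a regular subgroup $N$ normalised by $\lambda(G)$, regularity provides for each $a\in G$ the unique $\eta_{a}\in N$ with $\eta_{a}(e)=a$, and I would set $a\oplus b\stackrel{\mathrm{def}}{=}\eta_{a}(b)$, check that $a\mapsto\eta_{a}$ is a group isomorphism $(G,\oplus)\to N$, and verify that the normality of $N$ under conjugation by $\lambda(G)$ is precisely the skew brace identity for $(G,\oplus,\odot)$ --- the key computation being to evaluate $\lambda_{g}\,\eta_{a}\,\lambda_{g}^{-1}\in N$ at $e$ and then specialise ($b=e$) to pin down the resulting element. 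These two constructions are manifestly mutually inverse, so operations $\oplus$ making $(G,\oplus,\odot)$ a skew brace correspond bijectively to regular subgroups of $\mathrm{Perm}(G)$ normalised by $\lambda(G)$.

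Next I would track isomorphisms. By the definition of skew brace morphism, an isomorphism $(G,\oplus_{1},\odot)\to(G,\oplus_{2},\odot)$ is exactly a bijection $\varphi\in\mathrm{Aut}(G)$ (a group automorphism of $(G,\odot)$) with $\varphi(a\oplus_{1}b)=\varphi(a)\oplus_{2}\varphi(b)$ for all $a,b$. Writing $N_{i}$ and $d^{(i)}$ for the subgroup and embedding attached to $\oplus_{i}$, a short computation shows that under this hypothesis $\varphi\,d^{(1)}_{a}\,\varphi^{-1}=d^{(2)}_{\varphi(a)}$ for every $a$, so $\varphi N_{1}\varphi^{-1}=N_{2}$; conversely, if $\varphi\in\mathrm{Aut}(G)$ conjugates $N_{1}$ onto $N_{2}$, then evaluating $\varphi\,d^{(1)}_{a}\,\varphi^{-1}\in N_{2}$ at $e$ --- using $\varphi(e)=e$ and the coincidence of the two identities --- recovers $\varphi\,d^{(1)}_{a}\,\varphi^{-1}=d^{(2)}_{\varphi(a)}$, so $\varphi$ is a skew brace isomorphism. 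Since $\mathrm{Aut}(G)$ normalises $\lambda(G)$ and sends regular subgroups to regular subgroups, passing to orbits identifies isomorphism classes of $G$-skew braces with $\mathrm{Aut}(G)$-conjugacy classes of regular subgroups of $\mathrm{Perm}(G)$ normalised by $\lambda(G)$. Combining with Theorem~\ref{T1}, under which $L[N_{1}]^{G}$ and $L[N_{2}]^{G}$ are identified exactly when $N_{2}=\alpha N_{1}\alpha^{-1}$ for some $\alpha\in\mathrm{Aut}(G)$, yields the asserted bijection.

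The obstacle I anticipate is organisational rather than conceptual: carrying out cleanly, in both directions, the verification that conjugation of $\Ima d$ by $\lambda(G)$ is equivalent to the skew brace property, and keeping every evaluation anchored at the common identity $e=e_{\oplus}=e_{\odot}$ so that the correspondences are exact before one quotients by $\mathrm{Aut}(G)$, rather than holding only up to some residual ambiguity.
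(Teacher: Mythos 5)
Your proposal is correct and follows essentially the same route as the paper: the paper also uses the regular embedding $d$ in one direction, and in the other direction defines $\oplus$ via the bijection $\eta\mapsto\eta(1_{G})$ (its $g_{1}\oplus g_{2}=\phi(\phi^{-1}(g_{1})\phi^{-1}(g_{2}))$ is exactly your $\eta_{g_1}(g_2)$), verifying the skew brace identity from the normalisation relation $gn=f_{g,n}g$ and tracking isomorphisms through the same conjugation diagram. No substantive difference.
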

\begin{proof}
	Let $ \left(B,\oplus, \odot\right) $ be a $ G $-skew brace i.e., $ \left(B, \odot\right)\cong G $, we can assume $ \left(B, \odot\right)= G $. Then the map 
	\begin{align*}
	d: \left(B,\oplus\right)& \longrightarrow \mathrm{Perm}\left(B,\odot\right)\\
	a&\longmapsto \left(d_{a}: b\longmapsto a\oplus b\right) \ \text{for all} \ a,b \in B
	\end{align*}  
	is a regular embedding. Now, for any $ a \in \left(B,\oplus\right)  $ and $ b,c \in \left(B,\odot\right)  $, using the skew brace property, we have  
	\[b\odot \left(d_{a}\left(b^{-1}\odot c\right)\right)=b\odot \left(a\oplus\left(b^{-1}\odot c\right)\right)= \left(\left(b\odot a\right) \ominus b\right) \oplus c=d_{\left(b\odot a\right) \ominus b}(c),  \]
	where $ b^{-1} $ is the inverse of $ b $ with respect to $ \odot $. This shows that the image of $ \left(B,\oplus\right) $ is normalised by the image of $ \left(B, \odot\right) $ inside $ \mathrm{Perm}\left(B, \odot\right) $ as left translations. We also find an action of $ \left(B, \odot\right) $ on $ \left(B, \oplus\right) $ by $ b\cdot a= \left(b\odot a\right) \ominus b $ for $ b \in \left(B, \odot\right)  $ and $ a \in \left(B, \oplus\right) $. Now for  \[ \alpha: \left(B,\oplus_{1},\odot\right)\longrightarrow   \left(B,\oplus_{2},\odot\right)  \] an isomorphism of skew braces, we have a commutative diagram
	\[ 
	\begin{tikzcd}[row sep=2.5em , column sep=2.5em]
	\left(B,\oplus_{1}\right) \arrow[hook]{r}{d_{1}} \arrow{d}{\alpha}[swap]{\wr} & \mathrm{Perm}\left(B,\odot\right)  \arrow[]{d}{C_{\alpha}}[swap]{\wr}  \\ \left(B,\oplus_{2}\right)  \arrow[hook]{r}{d_{2}}[swap]{} & \mathrm{Perm}\left(B,\odot\right) , 
	\end{tikzcd} \]
	where $ C_{\alpha} $ is conjugation by $ \alpha \in \mathrm{Aut}\left(B,\odot\right) $ inside $ \mathrm{Perm}\left(B,\odot\right) $. Furthermore, if we fix a Galois extension of fields $ L/K $ with Galois group $ \left(B, \odot\right) $, then $ L[\left(B, \oplus\right)]^{\left(B, \odot\right)} $ endows $ L/K $ with a Hopf-Galois structure corresponding to the skew brace $ \left(B,\oplus, \odot\right) $ and when two skew braces with the same multiplication group are isomorphic then the corresponding Hopf-Galois structures can be identified.  
	
	Conversely, suppose we have a Hopf-Galois structure on $ L/K $ which can always be given by $ L[N]^{G} $ for some regular subgroup $ N \subseteq \mathrm{Perm}(G) $ which is normalised by the image of $ G $ as left translations inside $ \mathrm{Perm}(G) $. The fact that $ N $ is a regular subgroup implies that the map 
	\begin{align*}
	\mathrm{Perm}(G)&\longrightarrow G \\
	\eta&\longmapsto \eta\cdot 1_{G}.
	\end{align*}
	induces a bijection $ \phi:N\longrightarrow G $ as subgroups of $ \mathrm{Perm}(G) $. Now we can define a skew brace $ B $ by setting $ (B,\odot)\stackrel{\mathrm{def}}{=}G $, considered as a subgroup of $ \mathrm{Perm}(G) $ via the left translations, and defining 
	\[g_{1}\oplus g_{2} \stackrel{\mathrm{def}}{=}\phi \left(\phi^{-1}(g_{1})\phi^{-1}(g_{2})\right) \ \text{for} \ g_{1},g_{2} \in G. \]
	The fact that $ N \subseteq \mathrm{Perm}(G) $ is normalised by $ G $ implies that for all $ g \in G $ and $ n \in N $ we have $ gn=f_{g,n}g $ for some $ f_{g,n} \in N $. Therefore, for $ g_{1}=\phi(n_{1}),g_{2}=\phi(n_{2}),g_{3}=\phi(n_{3}) \in G $, we aim to show
	\[ g_{1}\odot \left(g_{2}\oplus g_{3}\right)= (g_{1}\odot g_{2})\ominus g_{1}\oplus( g_{1}\odot g_{3}).\]
	By definitions above we have 
	\[g_{1}\odot \left(g_{2}\oplus g_{3}\right)=\phi(n_{1})\odot \left(\phi(n_{2})\oplus \phi(n_{3})\right)= \phi(n_{1})\phi(n_{2}n_{3}).\]
	Now consider the element $ \phi(n_{1})n_{2}n_{3} \in \mathrm{Perm}(G) $. Using the relation $ gn=f_{g,n}g $, we have 
	\[\phi(n_{1})n_{2}n_{3}=f_{\phi(n_{1}),n_{2}n_{3}}\phi(n_{1}) \]
	for some $ f_{\phi(n_{1}),n_{2}n_{3}}\in N $. Now applying $ \phi $ to both side we get the relation 
	\[\phi(n_{1})\phi(n_{2}n_{3})=f_{\phi(n_{1}),n_{2}n_{3}}(\phi(n_{1}))\]
	in $ G $. Note $ f_{\phi(n_{1}),n_{2}n_{3}}(\phi(n_{1}))=\phi\left(f_{\phi(n_{1}), n_{2}n_{3}}n_{1}\right) $ in $ G $. Therefore, we find
	\begin{align*}
	g_{1}\odot \left(g_{2}\oplus g_{3}\right)&=\phi\left(f_{\phi(n_{1}), n_{2}n_{3}}n_{1}\right)=\phi\left(f_{\phi(n_{1}), n_{2}}f_{\phi(n_{1}), n_{3}}n_{1}\right)\\
	&=\phi\left(\phi^{-1}\phi \left(f_{\phi(n_{1}), n_{2}}n_{1}\right)n_{1}^{-1}\phi^{-1}\phi\left(f_{\phi(n_{1}), n_{3}}n_{1}\right)\right)\\
	&=\phi\left(\phi^{-1}\left(\phi(n_{1})\phi(n_{2})\right)n_{1}^{-1}\phi^{-1}\left(\phi(n_{1})\phi(n_{3})\right)\right)\\
	&=\phi\left(\phi^{-1}\left(g_{1}g_{2}\right)\left(\phi^{-1}(g_{1})\right)^{-1}\phi^{-1}\left(g_{1}g_{3}\right)\right)\\
	&=(g_{1}\odot g_{2})\ominus g_{1}\oplus( g_{1}\odot g_{3});
	\end{align*} 
	thus we have a skew brace $ (B,\oplus, \odot) $ which is a $ G $-skew brace of type $ N $. In particular, if $  N_{1} \subseteq \mathrm{Perm}(G)  $ is a regular subgroups whose image is normalised by $ G $ and $ \alpha \in \mathrm{Aut}(G) $, then $ N_{2}\stackrel{\mathrm{def}}{=} \alpha N_{1}\alpha^{-1} $ is a regular subgroup whose image is normalised by $ G $ and the skew braces corresponding to $ N_{1} $ and $ N_{2} $ are isomorphic by $ \alpha $.
\end{proof}
\begin{remark}
	Note in fact Proposition \ref{P1} above is implied by Theorem \ref{THG2} and \cite[Proposition A.3]{MR3763907}. We shall state \cite[Proposition A.3]{MR3763907} later (see Proposition \ref{P2}). However, we decided to include the calculations for a direct proof of Proposition \ref{P1} for completeness, which leads to an explicit relationship between the Hopf-Galois structures and skew braces. The question relating to the explicit relationship between the Hopf-Galois structures and skew braces was first asked from the author by Prof Agata Smoktunowicz. The answer can be reached by unravelling Theorem \ref{THG2} and \cite[Proposition A.3]{MR3763907} which is what has been done in Proposition \ref{P1}.
\end{remark}

The above proposition also helps us to understand the automorphism groups of skew braces. 
\begin{corollary}[Automorphism groups of skew braces]
	Let $ \left(B,\oplus,\odot\right) $ be a skew brace. Then there exists a natural identification 
	\[ \mathrm{Aut}_{\mathcal{B}r}\left(B,\oplus,\odot\right)\cong \left\{ \alpha \in \mathrm{Aut}\left(B,\odot\right)\mid \alpha \left(\Ima d\right) \alpha ^{-1} \subseteq \Ima d \right\}. \] 
\end{corollary}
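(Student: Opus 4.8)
The plan is to obtain both inclusions from the explicit formula for $d$ together with the commutative square already established in the proof of Proposition~\ref{P1}. First I would record the standard fact that in a skew brace the additive and multiplicative identities coincide: feeding the multiplicative identity $e$ into the skew brace property $e\odot(b\oplus c)=(e\odot b)\ominus e\oplus(e\odot c)$ gives $b\oplus c=b\ominus e\oplus c$, and taking $b,c$ to be the additive identity forces $e$ to be the additive identity too. Since $d_a(e)=a\oplus e=a$, evaluation at $e$ is a section of $\eta\mapsto\eta(e)$ on $\Ima d$, and this is the device that converts statements about conjugates of the permutations $d_a$ into statements about the map $a\mapsto\alpha(a)$.

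For the forward inclusion: if $\alpha\in\mathrm{Aut}_{\mathcal{B}r}(B,\oplus,\odot)$ then in particular $\alpha\in\mathrm{Aut}(B,\odot)$, and the commutative square of Proposition~\ref{P1}, specialised to $\oplus_1=\oplus_2=\oplus$ (so $d_1=d_2=d$), reads $\alpha\circ d_a\circ\alpha^{-1}=d_{\alpha(a)}$ for every $a\in B$. As $a$ ranges over $B$ so does $\alpha(a)$, so $\alpha(\Ima d)\alpha^{-1}=\Ima d$, and $\alpha$ lies in the right-hand set. (If one prefers a self-contained check, $\alpha d_a\alpha^{-1}=d_{\alpha(a)}$ follows by applying both sides to $\alpha(b)$ and using that $\alpha$ is $\oplus$-additive.)

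For the reverse inclusion: suppose $\alpha\in\mathrm{Aut}(B,\odot)$ with $\alpha(\Ima d)\alpha^{-1}\subseteq\Ima d$. Since $\Ima d$ is finite of cardinality $|B|$ and conjugation by $\alpha$ is injective, the inclusion is an equality, so for each $a$ there is a unique $a'\in B$ with $\alpha\circ d_a\circ\alpha^{-1}=d_{a'}$. Evaluating at $e$ and using $\alpha(e)=e$ gives $a'=\alpha\!\left(d_a(\alpha^{-1}(e))\right)=\alpha(a\oplus e)=\alpha(a)$, i.e.\ $\alpha\circ d_a\circ\alpha^{-1}=d_{\alpha(a)}$. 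Hence for all $a,b\in B$,
\[
\alpha(a\oplus b)=\alpha(d_a(b))=(\alpha\circ d_a\circ\alpha^{-1})(\alpha(b))=d_{\alpha(a)}(\alpha(b))=\alpha(a)\oplus\alpha(b),
\]
so $\alpha$ is an automorphism of $(B,\oplus)$ as well, hence a skew brace automorphism. Finally the assignment $\alpha\mapsto\alpha$ is a bijection between the two sets and clearly respects composition, giving the asserted identification as groups.

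I expect the only point requiring care — the ``hard part'', in an otherwise formal argument — to be the step upgrading $\alpha(\Ima d)\alpha^{-1}\subseteq\Ima d$ to an equality and then identifying the conjugate of $d_a$ as precisely $d_{\alpha(a)}$ via evaluation at $e$; once that is in place, additivity of $\alpha$ is immediate and the rest is bookkeeping. (Strictly, finiteness of $B$ is used here; for infinite $B$ one would instead need the hypothesis $\alpha(\Ima d)\alpha^{-1}=\Ima d$, but in the present paper all groups are finite.)
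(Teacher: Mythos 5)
Your argument is correct and follows essentially the same route as the paper: the forward inclusion via the commutative square $\alpha\circ d_a\circ\alpha^{-1}=d_{\alpha(a)}$ from Proposition~\ref{P1}, and the reverse inclusion by showing such an $\alpha$ is automatically $\oplus$-additive. You simply supply the details the paper leaves implicit in the reverse direction (upgrading the inclusion to equality by finiteness and pinning down the conjugate as $d_{\alpha(a)}$ by evaluating at the common identity), and your remark that an infinite $B$ would require equality rather than containment in the hypothesis is a fair observation consistent with the paper's standing finiteness assumption.
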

\begin{proof}
Note that if $ \left(B,\oplus,\odot\right) $ is a skew brace and  \[ \alpha: \left(B,\oplus,\odot\right)\longrightarrow   \left(B,\oplus,\odot\right)  \] an automorphism of skew braces, we have a commutative diagram
\[ 
\begin{tikzcd}[row sep=2.5em , column sep=2.5em]
\left(B,\oplus\right) \arrow[hook]{r}{d} \arrow{d}{\alpha}[swap]{\wr} & \mathrm{Perm}\left(B,\odot\right)  \arrow[]{d}{C_{\alpha}}[swap]{\wr}  \\ \left(B,\oplus\right)  \arrow[hook]{r}{d}[swap]{} & \mathrm{Perm}\left(B,\odot\right), 
\end{tikzcd} \]
implying that $ \alpha\left(\Ima d \right)\alpha^{-1} \subseteq \Ima d $. On the other hand, if $ \alpha\left(\Ima d \right)\alpha^{-1} \subseteq \Ima d $ for some $ \alpha \in  \mathrm{Aut}\left(B,\odot\right) $, then $ \alpha $ gives automorphism of $ \left(B,\oplus,\odot\right) $. From this observation one can see that 
\[ \mathrm{Aut}_{\mathcal{B}r}\left(B,\oplus,\odot\right)\cong \left\{ \alpha \in \mathrm{Aut}\left(B,\odot\right)\mid \alpha \left(\Ima d\right) \alpha ^{-1} \subseteq \Ima d \right\}. \] 	
\end{proof}
Next corollary shows how to obtain the number of Hopf-Galois structures using skew braces. Let $ e(G,N) $ be the number of Hopf-Galois structures of type $ N $ on the field extension $ L/K $ whose Galois group is $ G $. Denote by $ G_{N} $ the isomorphism class of a $ G $-skew brace of type $ N $. For later use we also set $  \widetilde{e}(G,N)  $ to be the number of isomorphism classes of $ G $-skew braces of type $ N $.
\begin{corollary}[Number of Hopf-Galois structures parametrised by skew braces]
	We have 
	\begin{align}\label{E1}
	e(G,N)= \sum_{G_{N}}\dfrac{\left\lvert \mathrm{Aut}(G)\right\rvert} {\left\lvert\mathrm{Aut}_{\mathcal{B}r}(G_{N})\right\rvert}.
	\end{align}
\end{corollary}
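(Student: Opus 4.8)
The plan is to realise $e(G,N)$ as a sum of orbit sizes for the conjugation action of $\mathrm{Aut}(G)$ on a suitable set of regular subgroups of $\mathrm{Perm}(G)$, and then to apply the orbit--stabiliser theorem together with Proposition \ref{P1} and the preceding corollary. First I would recall, via Theorem \ref{T1}, that $e(G,N)$ is the number of regular subgroups $N'\subseteq\mathrm{Perm}(G)$ with $N'\cong N$ that are normalised by the image $\lambda(G)$ of $G$ under left translation inside $\mathrm{Perm}(G)$; write $\mathcal{R}_{N}$ for this set. Since $\mathrm{Aut}(G)\subseteq\mathrm{Perm}(G)$ and $\alpha\lambda_{g}\alpha^{-1}=\lambda_{\alpha(g)}$ for $\alpha\in\mathrm{Aut}(G)$, conjugation by $\alpha$ fixes $\lambda(G)$ and therefore preserves regularity, the isomorphism type, and the property of being normalised by $\lambda(G)$. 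Hence $\mathrm{Aut}(G)$ acts on $\mathcal{R}_{N}$ by conjugation, and
\[ e(G,N)=\left\lvert\mathcal{R}_{N}\right\rvert=\sum_{\mathcal{O}}\left\lvert\mathcal{O}\right\rvert, \]
where the sum runs over the $\mathrm{Aut}(G)$-orbits $\mathcal{O}$ in $\mathcal{R}_{N}$.

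Next I would invoke Proposition \ref{P1}: after identifying $\left(B,\odot\right)$ with $G$, it attaches to the isomorphism class of a $G$-skew brace $\left(B,\oplus,\odot\right)$ of type $N$ the $\mathrm{Aut}(G)$-orbit of $\Ima d\in\mathcal{R}_{N}$, and this is a bijection between isomorphism classes of $G$-skew braces of type $N$ and the orbits $\mathcal{O}$. So the outer sum in (\ref{E1}) may equivalently be indexed by the orbits, with $G_{N}$ the skew brace class attached to a given $\mathcal{O}$. It then remains to evaluate $\left\lvert\mathcal{O}\right\rvert$. Choosing a representative $N'=\Ima d\in\mathcal{O}$, the orbit--stabiliser theorem gives $\left\lvert\mathcal{O}\right\rvert=\left\lvert\mathrm{Aut}(G)\right\rvert/\left\lvert\mathrm{Stab}(N')\right\rvert$, where $\mathrm{Stab}(N')=\{\alpha\in\mathrm{Aut}(G)\mid\alpha N'\alpha^{-1}=N'\}$. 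By the Corollary on automorphism groups of skew braces, $\mathrm{Aut}_{\mathcal{B}r}(G_{N})\cong\{\alpha\in\mathrm{Aut}(G)\mid\alpha N'\alpha^{-1}\subseteq N'\}$; since $N'$ is finite and each $\alpha$ is bijective, this inclusion forces equality, so the set on the right is exactly $\mathrm{Stab}(N')$. Therefore $\left\lvert\mathcal{O}\right\rvert=\left\lvert\mathrm{Aut}(G)\right\rvert/\left\lvert\mathrm{Aut}_{\mathcal{B}r}(G_{N})\right\rvert$, and substituting into the displayed identity yields (\ref{E1}).

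There is no serious obstacle here --- the statement is essentially the orbit--stabiliser theorem applied to data already assembled --- and the only care needed is bookkeeping when passing between the three indexings (regular subgroups in $\mathcal{R}_{N}$, their $\mathrm{Aut}(G)$-orbits, and isomorphism classes of $G$-skew braces of type $N$). Concretely, one should check that the orbit attached to a skew brace class in Proposition \ref{P1} is independent of the representative $N'=\Ima d$, which is exactly the assertion established there that isomorphic skew braces produce $\mathrm{Aut}(G)$-conjugate regular subgroups, and that $\left\lvert\mathrm{Aut}_{\mathcal{B}r}(G_{N})\right\rvert$ computed from any such $N'$ really equals $\left\lvert\mathrm{Stab}(N')\right\rvert$; the finiteness trick upgrading $\subseteq$ to $=$ is what makes the latter clean.
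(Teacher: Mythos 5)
Your proof is correct and follows essentially the same route as the paper: letting $\mathrm{Aut}(G)$ act by conjugation on the set of regular subgroups of $\mathrm{Perm}(G)$ isomorphic to $N$ and normalised by $G$, invoking Proposition \ref{P1} to match orbits with isomorphism classes of skew braces, and applying the orbit--stabiliser theorem with $\mathrm{Stab}(N')$ identified as $\mathrm{Aut}_{\mathcal{B}r}(G_{N})$. Your extra remarks (that conjugation preserves the relevant properties, and that finiteness upgrades $\subseteq$ to $=$ in the stabiliser identification) are details the paper leaves implicit, but the argument is the same.
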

\begin{proof}
	Fix $ G $ and let \[ \mathcal{S}(G,N) = \left\{ M \subseteq \mathrm{Perm}(G) \mid M \cong N \ \text{and} \ M\ \text{is regular normalised by}\ G \right\}. \] 
	Firstly, note that $ \mathrm{Aut}(G) $ acts on $ \mathcal{S}(G,N) $, induced by conjugation in $ \mathrm{Perm}(G) $, and a set of orbit representatives, say $ \left\{ N_{1},...,N_{s} \right\} $, give a list of non-isomorphic skew braces according to Proposition \ref{P1}. Secondly, by Theorem \ref{T1} we find $ e(G,N)=\left\lvert \mathcal{S}(G,N) \right \rvert $, and so we have
	\begin{align*}
	e(G,N)=\sum_{i=1}^{s}\left\lvert \mathrm{Orb}(N_{i}) \right \rvert = \sum_{i=1}^{s} \dfrac{\left\lvert \mathrm{Aut}(G)\right\rvert}{\left\lvert\mathrm{Stab}(N_{i})\right\rvert}= \sum_{G_{N}}\dfrac{\left\lvert \mathrm{Aut}(G)\right\rvert} {\left\lvert\mathrm{Aut}_{\mathcal{B}r}(G_{N})\right\rvert}.
	\end{align*}
\end{proof}
Therefore, to find skew braces and Hopf-Galois structures of order $ n $, one can find the regular subgroups $ N \subseteq \mathrm{Perm}(G) $ for every group $ G $ of size $ n $. However, in many cases $ \mathrm{Perm}(G) $ can be too large to handle. Fortunately, by somehow reversing the role of $ G $ and $ N $, instead of studying the regular subgroups of $ \mathrm{Perm}(G) $, one can study regular subgroups of a smaller group, the \textit{holomorph} of $ N $: \[ \mathrm{Hol}(N)\stackrel{\mathrm{def}}{=}N\rtimes \mathrm{Aut}(N)=\left\{\eta\alpha  \mid \eta \in N, \ \alpha \in \mathrm{Aut}(N) \right\},\]
also we can organise these objects in a nice way. These ideas in Hopf-Galois theory were initially developed by N. Byott \cite{MR1402555, MR2030805}. 

For skew braces we observe the following.  Let $  \left(B,\oplus,\odot\right) $ be a skew brace. Then the group $ \left(B,\odot\right) $ acts on $ \left(B,\oplus\right) $ by $ (a,b)\longmapsto a\odot b $, and we obtain a map
\begin{align*}
m: \left(B,\odot\right) &\longrightarrow \mathrm{Hol}\left(B,\oplus\right)\\
a&\longmapsto \left(m_{a} : b \longmapsto a\odot b\right)
\end{align*} 
which is a regular embedding. To see this one needs to check that the map 
\begin{align*}
\lambda_{a}: \left(B,\oplus\right) &\longrightarrow \left(B,\oplus\right)\\
b&\longmapsto \ominus a \oplus (a\odot b)
\end{align*} 
is an automorphism, and that the map
\begin{align*}
\lambda: \left(B,\odot\right)&\longmapsto  \mathrm{Aut}\left(B,\oplus\right)\\
a&\longmapsto \lambda_{a}
\end{align*} 
is a group homomorphism. Then one has $ m_{a}=a\lambda_{a}\in \mathrm{Hol}\left(B,\oplus\right) $ for all $ a \in B $. Additionally, for  $ \alpha: \left(B,\oplus,\odot_{1}\right)\longrightarrow   \left(B,\oplus,\odot_{2}\right)  $ an isomorphism of skew braces, we have
\[ 
\begin{tikzcd}[row sep=2.5em , column sep=2.5em]
\left(B,\odot_{1}\right) \arrow[hook]{r}{m_{1}} \arrow{d}{\alpha}[swap]{\wr} & \mathrm{Hol}\left(B,\oplus\right)  \arrow[]{d}{C_{\alpha}}[swap]{\wr} \\ \left(B,\odot_{2}\right)  \arrow[hook]{r}{m_{2}}[swap]{} & \mathrm{Hol}\left(B,\oplus\right),
\end{tikzcd} \]
where $ C_{\alpha} $ is conjugation by $ \alpha \in \mathrm{Aut}\left(B,\oplus\right) $ considered naturally as an element of $ \mathrm{Hol}\left(B,\oplus\right) $. This with similar procedure as used to prove Proposition \ref{P1} gives the following proposition of \cite{MR3763907}. 
\begin{proposition}\label{P2}
	There exists a bijective correspondence between isomorphism classes of skew braces of type $ N $ and classes of regular subgroups of $ \mathrm{Hol}(N) $ under conjugation by elements
	of $ \mathrm{Aut}(N) $.
\end{proposition}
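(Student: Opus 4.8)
The plan is to imitate the argument of Proposition~\ref{P1}, replacing $\mathrm{Perm}(G)$ by $\mathrm{Hol}(N)$ and the regular embedding $d$ by the regular embedding $m$ described just above the statement. First, starting from a skew brace $\left(B,\oplus,\odot\right)$ with $\left(B,\oplus\right)\cong N$, I would identify $\left(B,\oplus\right)=N$ and establish the two facts flagged in the text: that each $\lambda_{a}\colon b\mapsto \ominus a\oplus\left(a\odot b\right)$ is an automorphism of $\left(B,\oplus\right)$ — additivity is a direct rewriting of the skew brace property, and bijectivity holds because $\lambda_{a}$ is the composite of left $\odot$-multiplication by $a$ (a bijection of the underlying set) with the $\oplus$-translation by $\ominus a$ — and that $\lambda\colon\left(B,\odot\right)\to\mathrm{Aut}\left(B,\oplus\right)$ is a group homomorphism, which is a one-line computation using the skew brace identity together with the additivity of the $\lambda_{a}$. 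Given these, $m_{a}=a\lambda_{a}\in\mathrm{Hol}\left(B,\oplus\right)$ because $a\oplus\lambda_{a}(b)=a\odot b$; then $m$ is a homomorphism into $\mathrm{Hol}\left(B,\oplus\right)$, it is injective since $m_{a}(1)=a\odot 1=a$ (using the standard fact, itself immediate from the skew brace property, that the $\oplus$- and $\odot$-identities coincide), and for the same reason $\Ima m$ is transitive, hence regular. So each skew brace of type $N$ produces a regular subgroup $\Ima m\subseteq\mathrm{Hol}(N)$.

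Conversely, given a regular subgroup $M\subseteq\mathrm{Hol}(N)$, the orbit map $\mu\mapsto\mu(1_{N})$ restricts to a bijection $\phi\colon M\to N$. I would set $\left(B,\oplus\right)=N$ and transport the group structure of $M$ across $\phi$, defining $b_{1}\odot b_{2}=\phi\!\left(\phi^{-1}(b_{1})\phi^{-1}(b_{2})\right)$, so that $\left(B,\odot\right)\cong M$ by construction. Writing each $\mu\in M$ uniquely as $\mu=\eta_{\mu}\alpha_{\mu}$ with $\eta_{\mu}\in N$ and $\alpha_{\mu}\in\mathrm{Aut}(N)$, one checks $\phi(\mu)=\eta_{\mu}$ and $\phi(\mu\nu)=\eta_{\mu}\oplus\alpha_{\mu}(\eta_{\nu})$; substituting these into the definition of $\odot$ verifies the skew brace property by essentially the same chain of equalities used in the proof of Proposition~\ref{P1}. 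This yields a skew brace $\left(B,\oplus,\odot\right)$ of type $N$, and the two constructions are mutually inverse up to isomorphism.

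Finally, I would check that the correspondence descends to the stated equivalence classes. If $\alpha\colon\left(B,\oplus,\odot_{1}\right)\to\left(B,\oplus,\odot_{2}\right)$ is an isomorphism of skew braces, then $\alpha\in\mathrm{Aut}\left(B,\oplus\right)\subseteq\mathrm{Hol}\left(B,\oplus\right)$, and the commutative square displayed just before the statement shows $\alpha\left(\Ima m_{1}\right)\alpha^{-1}=\Ima m_{2}$. Conversely, conjugation by $\alpha\in\mathrm{Aut}(N)$ carries a regular subgroup of $\mathrm{Hol}(N)$ to another regular subgroup, and unwinding the construction of the previous paragraph shows the two associated skew braces are isomorphic via $\alpha$. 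Together with the two preceding paragraphs this gives the asserted bijection.

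The only genuinely non-formal points are the two preliminary verifications in the first paragraph — that each $\lambda_{a}$ is an automorphism and that $\lambda$ is multiplicative — and the skew brace identity in the second paragraph; all three are short manipulations of the defining identity, and once they are in place the remainder is bookkeeping with the orbit map $\phi$ and the commutative diagrams. I expect the main (and modest) obstacle to be making the bijection genuinely inverse to the construction: one must confirm that transporting $\odot$ back through the orbit map of $\Ima m$ recovers the original $\odot$, and conversely, which is exactly the place where keeping track of the identification $\left(B,\oplus\right)=N$ matters.
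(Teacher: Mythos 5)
Your proposal is correct and follows essentially the same route as the paper, which defers the statement to \cite[Proposition A.3]{MR3763907} after setting up exactly the machinery you use: the regular embedding $m_{a}=a\lambda_{a}$ into $\mathrm{Hol}\left(B,\oplus\right)$, the transport of structure along the orbit map for the converse, and the commutative square showing that skew brace isomorphisms correspond to conjugation by elements of $\mathrm{Aut}(N)$. Your write-up simply carries out in full the ``similar procedure as used to prove Proposition~\ref{P1}'' that the paper invokes, and the details (additivity and bijectivity of $\lambda_{a}$, multiplicativity of $\lambda$, the skew brace identity for the transported operation, and the mutual inverseness of the two constructions) all check out.
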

\begin{proof}
	\cite[Proposition A.3]{MR3763907}.
\end{proof}
In particular, we find another way of computing automorphism groups of skew braces:
\begin{align}\label{E2}
 \mathrm{Aut}_{\mathcal{B}r}\left(B,\oplus,\odot\right)\cong \left\{ \alpha \in \mathrm{Aut}\left(B,\oplus\right)\mid \alpha \left(\Ima m\right) \alpha ^{-1} \subseteq \Ima m \right\}. 
\end{align}

Therefore, in this way to find the set of non-isomorphic $ G $-skew braces of type $ N $, it suffices to find the set of regular subgroups of $ \mathrm{Hol}(N) $ which are isomorphic to $ G $, and then extract a maximal subset whose elements are not conjugate by any element of $ \mathrm{Aut}(N) $. In particular, (cf. \cite{MR2030805}) one can organise these regular subgroups, and hence the corresponding skew braces and Hopf-Galois structures, according to the size of their image under the natural projection 
\begin{align}\label{E4}
\varTheta : \mathrm{Hol}(N) &\longrightarrow \mathrm{Aut}(N)\nonumber\\
\eta\alpha&\longmapsto \alpha.
\end{align}
In other words, if $ \widetilde{\mathcal{S}}(G,N,r) $ is the set of regular subgroups of $ \mathrm{Hol}(N) $ isomorphic to $ G $ whose image under the natural projection $ \varTheta $ has size $ r $, then the set of regular subgroups of $ \mathrm{Hol}(N) $ isomorphic to $ G $ is a finite disjoint union
\[\widetilde{\mathcal{S}}(G,N)= \coprod_{r}\widetilde{\mathcal{S}}(G,N,r). \]
Furthermore, $ \mathrm{Aut}(N) $ acts on each $ \widetilde{\mathcal{S}}(G,N,r) $ via conjugation inside $ \mathrm{Hol}(N) $, and a set of orbit representatives provides a set of isomorphism classes of $ G $-skew brace of type $ N $, whose size upon embedding in $ \mathrm{Hol}(N) $ and projecting to $ \mathrm{Aut}(N) $ is $ r $, which we shall denote by $ G_{N}(r) $. In order to find the number of Hopf-Galois structures of type $ N $ it suffices to find the automorphism group of each $ G $-skew braces of type $ N $ using (\ref{E2}) and use the formula given in (\ref{E1}). We shall set $ e'(G,N,r)=\lvert \widetilde{\mathcal{S}}(G,N,r) \rvert $ and denote by $ \widetilde{e}(G,N,r) $ the number of isomorphism classes of skew braces $ G_{N}(r) $.

\subsection{Regular subgroups of holomorphs}\label{SB2} 
In this subsection we outline our strategy for finding regular subgroups contained in $ \mathrm{Hol}(N) $. Let us denote by \[ \varTheta: \mathrm{Hol}(N) \longrightarrow \mathrm{Aut}(N),\]
the natural projection with kernel $ N $. Then the first step is to organise the regular subgroups of $ \mathrm{Hol}(N) $ according to the size of their image under the map $ \varTheta $.

Now suppose we want to parametrise subgroups $ H\subseteq \mathrm{Hol}(N) $ with $ \left\lvert \varTheta(H)\right\rvert=m $, where $ m $ divides $\left\lvert N \right\rvert $. In order to do this, we first take a subgroup of order $ m $ of $ \mathrm{Aut}(N) $, which may be generated by some elements $ \alpha_{1},...,\alpha_{s} \in \mathrm{Aut}(N) $, say \[ H_{2}\stackrel{\mathrm{def}}{=} \left\langle  \alpha_{1},...,\alpha_{s} \right\rangle \subseteq \mathrm{Aut}(N).\] Next, we take a subgroup of order $ \frac{\left\lvert N \right\rvert}{m} $ of $ N $, which may be generated by $ \eta_{1},...,\eta_{r} \in N $, say \[ H_{1}\stackrel{\mathrm{def}}{=}\left\langle \eta_{1},...,\eta_{r} \right\rangle \subseteq N.\] We also take `general elements' $ v_{1},...,v_{s} \in N $, and we consider a subgroup of $ \mathrm{Hol}(N) $ of the form 
\[H=\left\langle \eta_{1},...,\eta_{r},v_{1}\alpha_{1},...,v_{s}\alpha_{s} \right\rangle. \]
Now we need to classify the constraints on $ v_{1},...,v_{s} $ such that $ H $ is regular, i.e., $ H $ has the same size as $ N $ and acts freely on $ N $. It is easy to see that there are many restrictions on $ v_{1},...,v_{s} $ and in many cases no choice of $ v_{1},...,v_{s} $ will result is a regular subgroup.

Notice that $ \left\lvert H \right\rvert  \geq\left\lvert N \right\rvert $ since we have the following commutative diagram 
\[ 
\begin{tikzcd}[row sep=2.5em , column sep=2.5em]
 H_{1} \arrow[hook]{r}{} \arrow[hook]{d}{}[swap]{} & H \arrow[two heads]{r}{\varTheta} \arrow[hook]{d}{}[swap]{} & H_{2} \arrow[hook]{d}{}[swap]{} \\ N  \arrow[hook]{r}{}[swap]{} &  \mathrm{Hol}(N) \arrow[two heads]{r}{\varTheta} & \mathrm{Aut}(N) ,
\end{tikzcd} \]
where the hook arrows are natural inclusion, and the second row is exact, but the first row is not necessarily exact. One of our goals is to select $ v_{1},...,v_{s} $ such that the first row is exact, which would implies that $ \left\lvert H \right\rvert = \left\lvert N \right\rvert $. In particular, we need $ H\cap N=H_{1} $.  That is for example, if there is a relation say $ \alpha^{a_{1}}=1 $ in $ H_{2} $, then we need to ensure that $ \left(v_{1}a_{1} \right)^{a_{1}}=v_{1}v_{1}^{\alpha_{1}}\cdots v_{1}^{\alpha_{1}^{a_{1}-1}}\in H_{1} $. Furthermore, we need to ensure that $ H $ acts freely on $ N $, and so for example, if $ v_{i} \in H_{1} $ for some $ i $, then $ H $ will not be acting freely.  

More generally we require the following. For $ H $ to have the same size as $ N $, we require for every relation $ R\left(\alpha_{1},...,\alpha_{s}\right)=1 $ on $ H_{2} $ to have \[R\left(u_{1}\left(v_{1}\alpha_{1}\right)w_{1},...,u_{s}\left(v_{s}\alpha_{s}\right)w_{s}\right)\in H_{1}, \] 
for every $ u_{1},w_{1},...,u_{s},w_{s}\in H_{1} $. For $ H $ to act freely on $ N $, it is necessary that for every word $ W\left(\alpha_{1},...,\alpha_{s}\right)\neq1 $ on $ H_{2} $ we require 
\[W(u_{1}\left(v_{1}\alpha_{1})w_{1},...,u_{s}(v_{s}\alpha_{s})w_{s}\right)W\left(\alpha_{1},...,\alpha_{s}\right)^{-1} \notin H_{1}, \] 
for every $ u_{1},w_{1},...,u_{s},w_{s}\in H_{1} $; so in fact we must have \[\left\langle \eta_{1},...,\eta_{r},v_{1},...,v_{s} \right\rangle = N.  \] 
However, in general there may be other conditions on $ v_{i} $ that need to  be taken into account -- for example, some elements of $ H $ need to satisfy relations between generators of a group of order $ \left\lvert N \right\rvert $. Therefore, as already mentioned, it can happen that desirable $ v_{i} $ cannot be found. To find all regular subgroups we repeat this process for every $ m $, every subgroup of order $ m $ of $ \mathrm{Aut}(N) $, and every subgroup of order $ \frac{\left\lvert N \right\rvert}{m} $ of $ N $.

Finally, in order to find non-isomorphic skew braces, we need to check which of these regular subgroups are conjugate to one another by elements of $ \mathrm{Aut}(N) $. Note, if $ H $ and $ \widetilde{H} $ are regular subgroups of $ \mathrm{Hol}(N) $ with $ \lvert\varTheta(H)\rvert=\lvert\varTheta( \widetilde{H})\rvert=m $, then $ H $ and $ \widetilde{H} $ are conjugate by an element of $ \beta \in \mathrm{Aut}(N) $ if 
\[\beta(H_{1})\subseteq \widetilde{H}_{1} \ \text{and} \ \beta H_{2}\beta^{-1}\subseteq \widetilde{H}_{2}, \] 
i.e., when $ H=\left\langle \eta_{1},...,\eta_{r},v_{1}\alpha_{1},...,v_{s}\alpha_{s} \right\rangle $, we need 
\[ \left\langle \eta_{1}^{\beta},...,\eta_{r}^{\beta},v_{1}^{\beta}\beta\alpha_{1}\beta^{-1},...,v_{s}^{\beta}\beta\alpha_{s}\beta^{-1} \right\rangle \subseteq \widetilde{H}.\]

Our starting point is studying the Heisenberg group of order $ p^{3} $ and its automorphism group. 
\section{The Heisenberg group $ M_{1} $}\label{S3}
For $ p>2 $ the exponent $ p $ nonabelian group of order $ p^{3} $, or otherwise known as the Heisenberg group, which we denote by $ M_{1} $,  has a presentation \[   M_{1} \stackrel{\mathrm{def}}{=}\left\langle \rho,\sigma, \tau \mid \rho^{p}=\sigma^{p}=\tau^{p}=1, \ \sigma\rho=\rho\sigma,  \ \tau\rho=\rho\tau, \ \tau\sigma=\rho\sigma\tau\right\rangle \cong C_{p}^{2}\rtimes C_{p}. \]
Note, the above relations imply that for positive integers $ a_{1},a_{2},a_{3},a_{4} $, we have \[\sigma^{a_{1}}\tau^{a_{2}}\sigma^{a_{3}}\tau^{a_{4}}=\rho^{a_{2}a_{3}}\sigma^{a_{1}+a_{3}}\tau^{a_{2}+a_{4}}\]
from which we also obtain the relation
\begin{align}\label{GE1}
(\sigma^{a_{1}}\tau^{a_{2}})^{n}=\rho^{\frac{1}{2}a_{1}a_{2}n(n-1)}\sigma^{na_{1}}\tau^{na_{2}}.
\end{align}

We note that the group $ M_{1} $ contains $ p^{3}-1 $ elements of order $ p $, thus $ p^{2}+p+1 $ subgroups of order $ p $, which are of the form \[ \left\langle\rho\right\rangle,\left\langle\rho^{a}\sigma\right\rangle, \left\langle\rho^{b}\sigma^{c}\tau\right\rangle \ \text{for} \ a,b,c=0,...,p-1. \] Also $ M_{1} $ contains $ p+1 $ subgroups of order $ p^{2} $, which are all isomorphic to $ C_{p}^{2} $, of the form \[ \left\langle\rho,\tau\right\rangle, \left\langle\rho,\sigma\tau^{d}\right\rangle \ \text{for} \ d=0,...,p-1.\]
The next proposition determines the automorphism group of $ M_{1} $. For the analogous result over $ \mathbb{Z} $ see \cite{DVO}. I am grateful to the referee for drawing my attention to this reference.
\begin{proposition}\label{P3}
	We have $ \left\lvert \mathrm{Aut}(M_{1})\right\rvert =(p^{2}-1)(p-1)p^{3} $ and 
	\[\mathrm{Aut}(M_{1})\cong C_{p}^{2}\rtimes \mathrm{GL}_{2}(\mathbb{F}_{p}),  \]
	where $ C_{p}^{2} $ in the semi-direct product above is generated by the automorphisms $ \beta, \gamma \in \mathrm{Aut}(M_{1}) $ defined by
	\begin{align*}
	\sigma^{\beta}&=\sigma, \ \tau^{\beta}=\rho\tau \ \text{and} \\
	\sigma^{\gamma}&=\rho\sigma, \ \tau^{\gamma}=\tau.
	\end{align*} 
	
	The (left) action of $ \mathrm{GL}_{2}(\mathbb{F}_{p}) $ on $ C_{p}^{2}=\left\langle \beta,\gamma \right\rangle $, in the semi-direct product, is given by \[\begin{pmatrix} a_{1} & a_{2} \\ a_{3} & a_{4} \end{pmatrix}\cdot\beta= \beta^{a_{1}}\gamma^{-a_{3}} \ \text{and} \ \begin{pmatrix} a_{1} & a_{2} \\ a_{3} & a_{4} \end{pmatrix}\cdot\gamma= \beta^{-a_{2}}\gamma^{a_{4}}. \]  
	where $ \begin{pmatrix} a_{1} & a_{2} \\ a_{3} & a_{4} \end{pmatrix} \in \mathrm{GL}_{2}(\mathbb{F}_{p}) $.
\end{proposition}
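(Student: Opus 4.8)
\section*{Proof proposal}

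The plan is to exploit the two canonical characteristic subgroups of $M_1$. Being nonabelian of order $p^{3}$, the group $M_1$ satisfies $Z(M_1)=[M_1,M_1]=\langle\rho\rangle=:Z$ (both equalities are immediate from the relations), so $Z$ is characteristic, $M_1/Z\cong C_p^2\cong\mathbb{F}_p^{2}$, and passing to the quotient gives a homomorphism $\theta:\mathrm{Aut}(M_1)\to\mathrm{Aut}(M_1/Z)=\mathrm{GL}_2(\mathbb{F}_p)$. Moreover the commutator map $M_1/Z\times M_1/Z\to Z$ is a nondegenerate alternating $\mathbb{F}_p$-bilinear form, hence transforms under $\mathrm{GL}_2$ by the determinant; consequently any $\phi\in\mathrm{Aut}(M_1)$ acts on $Z$ by the scalar $\det\theta(\phi)$. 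I would first determine $\ker\theta$: an automorphism inducing the identity on $M_1/Z$ must have the shape $\sigma\mapsto\sigma\rho^{m}$, $\tau\mapsto\tau\rho^{n}$, and fixes $\rho$ (being trivial on the commutator pairing); conversely every such assignment respects all the defining relations of $M_1$ — this is where $p>2$ enters, so that $(\sigma\rho^{m})^{p}=1$ etc.\ via formula (\ref{GE1}). Hence $\ker\theta\cong\mathrm{Hom}(M_1/Z,Z)\cong C_p^{2}$, with $\beta$ and $\gamma$ the elements corresponding to $(m,n)=(0,1)$ and $(1,0)$.

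Next I would show $\theta$ is surjective by writing down, for each $A=\begin{pmatrix}a_1&a_2\\a_3&a_4\end{pmatrix}\in\mathrm{GL}_2(\mathbb{F}_p)$, the endomorphism $\phi_A$ given on generators by $\sigma\mapsto\sigma^{a_1}\tau^{a_3}$, $\tau\mapsto\sigma^{a_2}\tau^{a_4}$, $\rho\mapsto\rho^{\det A}$; a short computation with (\ref{GE1}) (again using that $p$ is odd, so $\binom{p}{2}\equiv 0\pmod p$) shows the relations are preserved, and $\phi_A$ is bijective because its image is all of $M_1$ (it contains generators of $M_1$ modulo $Z$ together with a generator of $Z$). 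Combined with the previous step this yields $|\mathrm{Aut}(M_1)|=|\ker\theta|\cdot|\mathrm{GL}_2(\mathbb{F}_p)|=p^{2}\cdot p(p-1)(p^{2}-1)=(p^{2}-1)(p-1)p^{3}$, which is the order formula, and exhibits $\mathrm{Aut}(M_1)$ as an extension $1\to C_p^{2}\to\mathrm{Aut}(M_1)\xrightarrow{\theta}\mathrm{GL}_2(\mathbb{F}_p)\to 1$.

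The one genuinely non-routine point is that this extension splits. The obvious section $A\mapsto\phi_A$ is \emph{not} a homomorphism: composing two of the $\phi_A$ produces a central error term coming from the ``upper-triangular'' $2$-cocycle built into the chosen presentation. To get around this I would pass to the symmetric model of the Heisenberg group, realising $M_1\cong(\mathbb{F}_p^{2}\oplus\mathbb{F}_p,\ast)$ with $(v,t)\ast(v',t')=(v+v',\,t+t'+\tfrac12\omega(v,v'))$ for a fixed symplectic form $\omega$ on $\mathbb{F}_p^{2}$ — the $\tfrac12$ uses $p$ odd once more — and checking that $\sigma\mapsto(e_1,0)$, $\tau\mapsto(e_2,0)$, $\rho\mapsto(0,1)$ defines such an isomorphism. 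In this model, for $A\in\mathrm{GL}_2(\mathbb{F}_p)$ the map $\Phi_A(v,t):=(Av,(\det A)t)$ is an automorphism (because $\omega(Av,Av')=(\det A)\,\omega(v,v')$), and $A\mapsto\Phi_A$ is now visibly an injective homomorphism with image meeting $\ker\theta$ trivially and of order $|\mathrm{GL}_2(\mathbb{F}_p)|$; so it is a complement to $\ker\theta$, and $\mathrm{Aut}(M_1)\cong C_p^{2}\rtimes\mathrm{GL}_2(\mathbb{F}_p)$.

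It remains to read off the action. Under the canonical identification $\ker\theta\cong\mathrm{Hom}(M_1/Z,Z)=(M_1/Z)^{\ast}\otimes Z$ one has $\beta=\bar\tau^{\ast}\otimes\rho$ and $\gamma=\bar\sigma^{\ast}\otimes\rho$; since $\Phi_A$ acts on $M_1/Z$ by $A$ (in the basis $\bar\sigma,\bar\tau$) and on $Z$ by $\det A$, it acts on $(M_1/Z)^{\ast}\otimes Z$ by $(\det A)(A^{-1})^{\mathrm T}=\begin{pmatrix}a_4&-a_3\\-a_2&a_1\end{pmatrix}$ in the basis $(\gamma,\beta)$, which unwinds to exactly $A\cdot\beta=\beta^{a_1}\gamma^{-a_3}$ and $A\cdot\gamma=\beta^{-a_2}\gamma^{a_4}$. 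The main obstacle, as flagged above, is obtaining the splitting (equivalently, trivialising the cocycle attached to the given presentation); everything else is bookkeeping with the relations and formula (\ref{GE1}).
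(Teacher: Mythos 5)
Your proof is correct, and it rests on the same decomposition as the paper's: both pass to the characteristic subgroup $Z=\langle\rho\rangle$, obtain the exact sequence $1\to C_{p}^{2}\to\mathrm{Aut}(M_{1})\to\mathrm{GL}_{2}(\mathbb{F}_{p})\to 1$ with kernel $\langle\beta,\gamma\rangle$, and then split it. The difference is in how the two delicate points are discharged. For the splitting, the paper stays inside the presentation and writes down the corrected section $\alpha_{A}$ with the central entries $\tfrac{ac}{2},\tfrac{bd}{2}$, leaving it as ``easy to check'' that $A\mapsto\alpha_{A}$ is a homomorphism; you instead trivialise the same cocycle by changing coordinates to the symmetric model $(\mathbb{F}_{p}^{2}\oplus\mathbb{F}_{p},\ast)$, where $\Phi_{A}(v,t)=(Av,(\det A)\,t)$ is visibly a homomorphic section. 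These are the same splitting in different clothes --- transporting $\Phi_{A}$ back through your isomorphism reproduces exactly the paper's $\tfrac{ac}{2},\tfrac{bd}{2}$ corrections --- but your route makes the homomorphism property transparent rather than a computation the reader must trust. Similarly, where the paper says ``one can check'' the action formulas, your identification of the kernel with $(M_{1}/Z)^{\ast}\otimes Z$, on which $\Phi_{A}$ acts by $(\det A)\,(A^{-1})^{\mathrm{T}}$, derives $A\cdot\beta=\beta^{a_{1}}\gamma^{-a_{3}}$ and $A\cdot\gamma=\beta^{-a_{2}}\gamma^{a_{4}}$ conceptually, and I have verified these unwind correctly. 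The steps you defer to routine verification (that $\phi_{A}$ preserves the relations, and that $\sigma\mapsto(e_{1},0)$, $\tau\mapsto(e_{2},0)$, $\rho\mapsto(0,1)$ is an isomorphism, which fixes the normalisation of $\omega$) really are routine, and both uses of $p$ being odd are correctly flagged.
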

\begin{proof}
	Let $ \alpha \in  \mathrm{Aut}(M_{1}) $. Then we have
	\begin{align*}
	\sigma^{\alpha}&=\rho^{b_{1}}\sigma^{a_{1}}\tau^{a_{3}} \\
	\tau^{\alpha}&=\rho^{b_{2}}\sigma^{a_{2}}\tau^{a_{4}}
	\end{align*}
	for some $ a_{1},a_{2},a_{3},a_{4},b_{1},b_{2} \in \mathbb{Z} /p\mathbb{Z} $. Note, $ \rho^{\alpha} $ is determined by above and we find \[\rho^{\alpha}=\tau^{\alpha}\sigma^{\alpha}\left(\sigma^{\alpha}\tau^{\alpha}\right)^{-1}=\rho^{a_{1}a_{4}-a_{2}a_{3}},\] so $ \alpha $ is bijective if and only if $ a_{1}a_{4}-a_{2}a_{3}\not\equiv 0 \ \mathrm{mod} \ p $. We shall write \[ \begin{bmatrix} a_{1}a_{4}-a_{2}a_{3} & b_{1} & b_{2} \\ 0 & a_{1} & a_{2} \\ 0 & a_{3} & a_{4} \end{bmatrix}\ \text{or} \ \begin{bmatrix} \mathrm{det}(A) & b_{1} & b_{2} \\ 0 & \ \ \ \ \ A \end{bmatrix} \] to represent $ \alpha $. This is only a representation, and not a matrix, so composition of automorphisms does not in general correspond to matrix multiplication. In fact composition of automorphisms yields the following. 
	\begin{align*}
	&\begin{bmatrix} \mathrm{det}(A) & b_{1} & b_{2} \\ 0 & A \end{bmatrix}\circ \begin{bmatrix} \mathrm{det}(A') & b_{1}' & b_{2}' \\ 0 & \ \ \ \ \ A' \end{bmatrix} \\
	&=\begin{bmatrix} \begin{pmatrix} \mathrm{det}(A) & b_{1} & b_{2} \\ 0 & \ \ \ \ \ A \end{pmatrix} \begin{pmatrix} \mathrm{det}(A') & b_{1}' & b_{2}' \\ 0 & \ \ \ \ \ A' \end{pmatrix} +  \begin{pmatrix} 0 & C_{1} & C_{2} \\ 0 & \ \ \ \ \ 0 \end{pmatrix} \end{bmatrix}
	\end{align*}
	for 
	\begin{align*}
	C_{1}&=\frac{1}{2}a_{1}a_{3}a_{1}'(a_{1}'-1)+\frac{1}{2}a_{2}a_{4}a_{3}'(a_{3}'-1)+a_{3}a_{1}'a_{2}a_{3}'\\
	C_{2}&=\frac{1}{2}a_{1}a_{3}a_{2}'(a_{2}'-1)+\frac{1}{2}a_{2}a_{4}a_{4}'(a_{4}'-1)+a_{3}a_{2}'a_{2}a_{4}'.
	\end{align*}
	
	The group $ M_{1} $ has centre $  Z= \left\langle\rho\right\rangle $ of order $ p $ and \[ M_{1}/Z= \left\langle \overline{\sigma}, \overline{\tau}\right\rangle \cong C_{p}^{2},\] where $ \overline{\sigma},\overline{\tau} \in M_{1}/Z $ are the images of $ \sigma,\tau \in M_{1} $. Thus we obtain a natural homomorphism \[ \varPsi: \mathrm{Aut}(M_{1})\longrightarrow \mathrm{Aut}(M_{1}/Z) \cong \mathrm{GL}_{2}(\mathbb{F}_{p}). \] Since $ M_{1}/Z\cong C_{p}^{2} $ is abelian, we see that the set of inner automorphisms of $ M_{1} $ is contained in the kernel of $ \varPsi $ i.e., $ \mathrm{Inn}(M_{1}) \subseteq \Ker \varPsi $. Note $ \mathrm{Inn}(M_{1}) \cong M_{1}/Z  $. Now if $ \alpha \in \Ker \varPsi $, then we must have $ \tau^{\alpha}\tau^{-1}\in Z $ and $ \sigma^{\alpha}\sigma^{-1}\in Z $ i.e., 
	\begin{align*}
	\sigma^{\alpha}&=\rho^{r_{1}}\sigma \\ 
	\tau^{\alpha}&=\rho^{r_{2}}\tau
	\end{align*}
	for some integers $ r_{1}, r_{2} = 0,...,p-1 $, which implies that $ \rho^{\alpha}=\rho $. There can be at most $ p^{2} $ choices for such $ \alpha $, which implies that $ \mathrm{Inn}(M_{1}) = \Ker \varPsi $. We further find $ \Ker \varPsi=\left\langle\beta,\gamma\right\rangle $ where 
	\[ \beta\stackrel{\mathrm{def}}{=}\begin{bmatrix} 1 & 0 & 1 \\ 0 & 1 & 0 \\ 0 & 0 & 1 \end{bmatrix},\ \gamma\stackrel{\mathrm{def}}{=}\begin{bmatrix} 1 & 1 & 0 \\ 0 & 1 & 0 \\ 0 & 0 & 1 \end{bmatrix}.  \]  
	
	To show that the map $ \varPsi $ is surjective, for any element \[ A\stackrel{\mathrm{def}}{=}\begin{pmatrix} a & b \\ c & d \end{pmatrix} \in \mathrm{GL}_{2}(\mathbb{F}_{p}) \] define a map \[ \alpha_{A} : M_{1} \longrightarrow M_{1} \ \text{given by} \  \alpha_{A}\stackrel{\mathrm{def}}{=}\begin{bmatrix} ad-bc & \frac{ac}{2} & \frac{bd}{2} \\ 0 & a & b \\ 0 & c & d \end{bmatrix}.\]  
	It is also easy to check that $ A\longmapsto \alpha_{A} $ is a group homomorphism. Therefore, we find a split exact sequence  \[ 
	\begin{tikzcd}[row sep=1.1em , column sep=1.1em]
	1 \arrow[]{r}[]{} & C_{p}^{2}  \arrow{r}[]{}  & \mathrm{Aut}(M_{1}) \arrow{r}[]{}   & \mathrm{GL}_{2}(\mathbb{F}_{p}) \arrow{r}[]{} & 1.  
	\end{tikzcd} \] 
	One can check that the left action of $ \mathrm{GL}_{2}(\mathbb{F}_{p}) $ on $ C_{p}^{2} $ is given by \[\begin{pmatrix} a_{1} & a_{2} \\ a_{3} & a_{4} \end{pmatrix}\cdot\beta= \beta^{a_{1}}\gamma^{-a_{3}} \  \text{and} \ \begin{pmatrix} a_{1} & a_{2} \\ a_{3} & a_{4} \end{pmatrix}\cdot\gamma= \beta^{-a_{2}}\gamma^{a_{4}}. \]
	Note the above corresponds \[\alpha_{A}\beta= \beta^{a_{1}}\gamma^{-a_{3}}\alpha_{A} \  \text{and} \ \alpha_{A}\gamma= \beta^{-a_{2}}\gamma^{a_{4}}\alpha_{A}. \]  
\end{proof} 
\section{Skew braces of $ M_{1} $ type}\label{S4}
In this section we classify the skew braces and Hopf-Galois structures of $ M_{1} $ type. The main result of this section is the following (which is a proof of Theorems \ref{T2} and \ref{T3}). Recall, $ \widetilde{e}(G,N) $ is the number of $ G $-skew braces of type $ N $ and $ e(G,N) $ is the number of Hopf-Galois structures on a Galois extension with Galois group $ G $ of type $ N $.
\begin{proposition}\label{P5}
	We have
	\begin{align*}
	\widetilde{e}(M_{1},M_{1})&=2p^{2}-p+3,\\
	\widetilde{e}(C_{p}^{3},M_{1})&=2p+1, 
	\end{align*}
	and $ \widetilde{e}(G ,M_{1})=0 $ for  $ G\ncong M_{1} $ or $ C_{p}^{3} $.
	
	Furthermore, we have 
	\begin{align*}
	e(M_{1},M_{1})&=(2p^{3}-3p+1)p^{2},\\
	e(C_{p}^{3},M_{1})&=(p^{3}-1)(p^{2}+p-1)p^{2}, 
	\end{align*}
	and $ e(G ,M_{1})=0 $ for  $ G\ncong M_{1} $ or $ C_{p}^{3} $. 
\end{proposition}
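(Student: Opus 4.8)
The approach is exactly the programme set up in Section~\ref{S2}. By Proposition~\ref{P2}, the isomorphism classes of $G$-skew braces of type $M_1$ are in bijection with the $\mathrm{Aut}(M_1)$-conjugacy classes of regular subgroups $H\subseteq\mathrm{Hol}(M_1)$ with $H\cong G$; by \eqref{E2} the invariant $\mathrm{Aut}_{\mathcal{B}r}$ of such a brace is the stabiliser of $H$ in $\mathrm{Aut}(M_1)$ under this conjugation action, so once all regular subgroups of $\mathrm{Hol}(M_1)$ are located, $\widetilde e(G,M_1)$ is a count of orbits and $e(G,M_1)$ follows from \eqref{E1} together with $\lvert\mathrm{Aut}(M_1)\rvert=(p^2-1)(p-1)p^3$ (Proposition~\ref{P3}) and $\lvert\mathrm{Aut}(C_p^3)\rvert=\lvert\mathrm{GL}_3(\mathbb{F}_p)\rvert=(p^3-1)(p^3-p)(p^3-p^2)$. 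Following Section~\ref{SB2}, I would stratify the regular subgroups according to $r=\lvert\varTheta(H)\rvert$, where $\varTheta\colon\mathrm{Hol}(M_1)\to\mathrm{Aut}(M_1)$: since $\varTheta(H)\cong H/(H\cap M_1)$ is a $p$-group of order dividing $\lvert M_1\rvert=p^3$ and a Sylow $p$-subgroup of $\mathrm{Aut}(M_1)$ has order $p^3$ (as $p\nmid(p^2-1)(p-1)$, by Proposition~\ref{P3}), one has $r\in\{1,p,p^2,p^3\}$.

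Next I would pin down the possible isomorphism types. A regular $H$ is a $p$-group of order $p^3$, hence lies in a Sylow $p$-subgroup of $\mathrm{Hol}(M_1)$; a direct computation with the group law \eqref{GE1} and the composition rule of Proposition~\ref{P3} (this is a place where the hypothesis $p>3$ is used) shows that such a Sylow subgroup has exponent $p$, so every regular $H$ has exponent $p$, and therefore $H\cong M_1$ or $H\cong C_p^3$. Consequently $\widetilde e(G,M_1)=e(G,M_1)=0$ whenever $G\ncong M_1,C_p^3$, in particular for $C_{p^3}$, $C_{p^2}\times C_p$ and $M_2$. The stratum $r=1$ is immediate: there $H\subseteq\Ker\varTheta=M_1$, hence $H=M_1$, the single ``trivial'' brace with $\oplus=\odot$, an $M_1$-brace of type $M_1$.

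The core of the argument is the strata $r\in\{p,p^2,p^3\}$, handled by the parametrisation described in Section~\ref{SB2}. For each such $r$ I would first list, up to $\mathrm{Aut}(M_1)$-conjugacy, the subgroups $H_2\subseteq\mathrm{Aut}(M_1)$ of order $r$; by Proposition~\ref{P3} these are few (inside $C_p^2\rtimes\mathrm{GL}_2(\mathbb{F}_p)$: the line subgroups of the inner part $\langle\beta,\gamma\rangle$, a transvection subgroup $\langle\alpha_A\rangle$ of the $\mathrm{GL}_2$-part, and products of these), and similarly the $\mathrm{Aut}(M_1)$-classes of subgroups $H_1\subseteq M_1$ of order $p^3/r$ are few. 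For each pair $(H_1,H_2)$, writing $H_2=\langle\alpha_1,\dots,\alpha_s\rangle$, I would form $H=\langle H_1,v_1\alpha_1,\dots,v_s\alpha_s\rangle$ with ``general'' $v_i\in M_1$ and impose the two kinds of conditions from Section~\ref{SB2}: that $\langle H_1,v_1,\dots,v_s\rangle=M_1$ (necessary for transitivity and freeness), and that every defining relator of $H_2$ pulls back into $H_1$ (necessary for $\lvert H\rvert=p^3$); both become explicit systems of congruences via \eqref{GE1} and the composition rule of Proposition~\ref{P3}. Solving these cuts the $v_i$ down to finitely many parametrised families; for each family I would read off the isomorphism type ($M_1$ or $C_p^3$), then use the $\mathrm{Aut}(M_1)$-conjugation rule of Proposition~\ref{P3} to group the subgroups into $\mathrm{Aut}(M_1)$-orbits and to compute the stabiliser $\mathrm{Aut}_{\mathcal{B}r}$ and the size of each orbit. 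Summing orbit counts over $r\in\{1,p,p^2,p^3\}$ and over the two types should yield $\widetilde e(M_1,M_1)=2p^2-p+3$ and $\widetilde e(C_p^3,M_1)=2p+1$; summing $\lvert\mathrm{Aut}(G)\rvert/\lvert\mathrm{Aut}_{\mathcal{B}r}\rvert$ over the same orbits (equivalently, $\frac{\lvert\mathrm{Aut}(G)\rvert}{\lvert\mathrm{Aut}(M_1)\rvert}$ times the total number of regular subgroups of $\mathrm{Hol}(M_1)$ isomorphic to $G$) should yield $e(M_1,M_1)=(2p^3-3p+1)p^2$ and $e(C_p^3,M_1)=(p^3-1)(p^2+p-1)p^2$.

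The main obstacle is the bookkeeping in the larger strata, especially $r=p^2$ (and $r=p^3$): there the general $v_i$ carry several $\mathbb{F}_p$-parameters, the regularity requirements form a coupled system of mod-$p$ congruences coming from \eqref{GE1}, and one must simultaneously (a) solve the system, (b) decide when two parameter choices produce the same subgroup $H$, (c) partition the surviving $H$ into $\mathrm{Aut}(M_1)$-orbits, and (d) determine each stabiliser precisely, since any miscount at this stage feeds directly into both $\widetilde e$ and $e$. I expect essentially all of the labour to lie here; the reduction at the start and the final summation are routine once the strata have been resolved, and the closed-form identities in the statement provide a convenient internal consistency check (for instance, the trivial brace should account for exactly one Hopf--Galois structure, namely the classical one).
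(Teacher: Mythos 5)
Your plan is a faithful description of the paper's own strategy: Proposition \ref{P5} is proved there exactly by stratifying the regular subgroups $H\subseteq\mathrm{Hol}(M_{1})$ according to $\lvert\varTheta(H)\rvert\in\{1,p,p^{2},p^{3}\}$, parametrising each stratum as in Section \ref{SB2}, extracting $\mathrm{Aut}(M_{1})$-orbit representatives, computing stabilisers, and summing via (\ref{E1}). Your preliminary reductions are also correct and match the paper: the identity (\ref{E110}) (valid because $p>3$) shows every element of a Sylow $p$-subgroup of $\mathrm{Hol}(M_{1})$ has order dividing $p$, so every regular subgroup has exponent $p$ and is isomorphic to $M_{1}$ or $C_{p}^{3}$, giving $\widetilde{e}(G,M_{1})=e(G,M_{1})=0$ otherwise; and the stratum $r=1$ contributes exactly the trivial brace.

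However, what you have written is a programme, not a proof. The entire quantitative content of the proposition --- the lists of regular subgroups in the strata $r=p,p^{2},p^{3}$, the resolution of which parameter choices give conjugate subgroups, the identification of the $1+2(p-1)+(2p-3)p+4$ classes isomorphic to $M_{1}$ and the $2+(2p-1)$ classes isomorphic to $C_{p}^{3}$, and the explicit stabiliser computations that feed into $e(M_{1},M_{1})$ and $e(C_{p}^{3},M_{1})$ --- is deferred with the phrase that the congruence systems ``should yield'' the stated totals. In the paper this is the content of Lemmas \ref{L16}, \ref{L16H}, \ref{L17}, \ref{L17H} and \ref{L18}, and it is where all the difficulty lives: for instance, in the stratum $r=p^{2}$ one must separate Case I ($\varTheta(G)=\langle\alpha_{1},\alpha_{3}\rangle$) from Case II ($\varTheta(G)=\langle\alpha_{1},\alpha_{2}\alpha_{3}^{a}\rangle$), show that $G\cap M_{1}$ is forced to be $\langle\rho\rangle$, reduce the orbit problem to a congruence classification of $2\times 2$ matrices under $\beta\mapsto b\beta\beta_{0}\beta^{T}$, and in the Hopf--Galois count distinguish the three subcases according to whether $u_{2}^{2}-4u_{3}$ is zero, a nonzero square, or a nonsquare. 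None of this is present in your proposal, so the stated formulas are not established. You have correctly identified the route; you now need to walk it.
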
 
\begin{proof}
	This follows from the calculation in the rest of this section, particularly the first part follows by adding the relevant numbers from Lemmas \ref{L16}, \ref{L17}, and \ref{L18}
	\begin{align*}
	\widetilde{e}(M_{1},M_{1})&=1+2(p-1)+(2p-3)p+4=2p^{2}-p+3,\\
	\widetilde{e}(C_{p}^{3},M_{1})&=2+2p-1=2p+1, 
	\end{align*}
	and the second part follows by adding relevant numbers from \ref{L16H}, \ref{L17H}, and \ref{L18}
		\begin{align*}
	e(M_{1},M_{1})&=1+(p^{3}-p^{2}-1)(p+1)+(p^{4}-p^{3}-2p^{2}+2p+1)p+(p^{2}-1)p^{3}\\
	&=(2p^{3}-3p+1)p^{2},\\
	e(C_{p}^{3},M_{1})&=(p^{3}-1)(p+1)p^{2}+(p^{3}-1)(p^{2}-2)p^{2}=(p^{3}-1)(p^{2}+p-1)p^{2}.
	\end{align*}
\end{proof}
We note that at the end of lemmas \ref{L16},  \ref{L17}, and \ref{L18} there are lists of non-isomorphic skew braces together with a description of their automorphism groups. 

Before we begin to prove Lemmas \ref{L16}, \ref{L16H},  \ref{L17}, \ref{L17H}, and \ref{L18}, we need to set up some notations. Let us denote by \[ \alpha_{1}\stackrel{\mathrm{def}}{=}\begin{bmatrix} 1 & 1 & 0 \\ 0 & 1 & 0 \\ 0 & 0 & 1 \end{bmatrix},\ \alpha_{2}\stackrel{\mathrm{def}}{=}\begin{bmatrix} 1 & 0 & 0 \\ 0 & 1 & 0 \\ 0 & 1 & 1 \end{bmatrix}, \ \alpha_{3}\stackrel{\mathrm{def}}{=}\begin{bmatrix} 1 & 0 & 1 \\ 0 & 1 & 0 \\ 0 & 0 & 1  \end{bmatrix}.\] 
Note in Proposition \ref{P3}, we had $ \alpha_{1}=\gamma $ and $ \alpha_{3}=\beta $. Furthermore, we showed that $ \mathrm{Aut}(M_{1}) $ can be written as
\[\mathrm{Aut}(M_{1})\cong C_{p}^{2}\rtimes \mathrm{GL}_{2}(\mathbb{F}_{p}),  \]
where the factor $ C_{p}^{2} $ is generated by automorphisms $ \alpha_{1},\alpha_{3} \in \mathrm{Aut}(M_{1}) $. The (left) action of $ \mathrm{GL}_{2}(\mathbb{F}_{p}) $ on $ C_{p}^{2} $ is given by
\begin{align}\label{E95}
\begin{pmatrix} a_{1} & a_{2} \\ a_{3} & a_{4} \end{pmatrix}\cdot\alpha_{1}= \alpha_{1}^{a_{4}}\alpha_{3}^{-a_{2}}, \ \begin{pmatrix} a_{1} & a_{2} \\ a_{3} & a_{4} \end{pmatrix}\cdot\alpha_{3}= \alpha_{1}^{-a_{3}}\alpha_{3}^{a_{1}}.  
\end{align}

Therefore, the holomorph of $ M_{1} $ can be identified with \[ \mathrm{Hol}(M_{1})\cong M_{1}\rtimes (C_{p}^{2}\rtimes\mathrm{GL}_{2}(\mathbb{F}_{p}) ). \] 
Now the image in $ \mathrm{GL}_{2}(\mathbb{F}_{p}) $ of a subgroup $ G \subseteq \mathrm{Hol}(M_{1}) $ of order $ p^{3} $  under the composition of projections \[\varTheta : \mathrm{Hol}(M_{1})\longrightarrow \mathrm{Aut}(M_{1}) \ \text{and} \ \varPsi :  \mathrm{Aut}(M_{1})\longrightarrow \mathrm{GL}_{2}(\mathbb{F}_{p}) \] must lie in one of the $ p+1 $ Sylow $ p $-subgroup of $ \mathrm{GL}_{2}(\mathbb{F}_{p}) $, which are conjugate to the subgroup generated by $ \beta_{1}\stackrel{\mathrm{def}}{=}\begin{psmallmatrix} 1 & 0 \\ 1 & 1  \end{psmallmatrix} $; thus we have \[ \varTheta(G) \subseteq  \mathrm{A}_{\beta}(M_{1})\stackrel{\mathrm{def}}{=} C_{p}^{2}\rtimes\left\langle\beta\beta_{1}\beta^{-1}\right\rangle \cong M_{1} \ \text{for some} \ \beta \in \mathrm{GL}_{2}(\mathbb{F}_{p}), \] and so any subgroup of $ \mathrm{Hol}(M_{1}) $ of order $ p^{3} $ lies in a subgroup of the form \[ M_{1}\rtimes  \mathrm{A}_{\beta}(M_{1})  \ \text{for some} \ \beta \in \mathrm{GL}_{2}(\mathbb{F}_{p}).\] 
Note, the elements $ \alpha_{1}, \alpha_{2}, \alpha_{3} \in \mathrm{Aut}(M_{1}) $ have order $ p $, and they satisfy 
\begin{align}\label{E109}
\alpha_{2}\alpha_{1}=\alpha_{1}\alpha_{2}, \ \alpha_{3}\alpha_{1}=\alpha_{1}\alpha_{3}, \ \alpha_{3}\alpha_{2}=\alpha_{1}\alpha_{2}\alpha_{3}.
\end{align} 
Thus, we have that $ \left\langle\alpha_{1},\alpha_{2},\alpha_{3}\right\rangle \cong M_{1}  $ is one of the $ p+1 $ Sylow $ p $-subgroups of $ \mathrm{Aut}(M_{1}) $, which is the one we can, and shall, without loss of generality, work with. First, note that for $ \left\lvert\varTheta(G) \right\rvert=1 $, we have \begin{align*}
e(M_{1},M_{1},1)&=\widetilde{e}(M_{1},M_{1},1)=1  \ \text{and}\\
e(G,M_{1},1)&=\widetilde{e}(G,M_{1},1)=0 \ \text{if} \ G\neq M_{1}.
\end{align*} 
We shall deal with the cases $ \left\lvert\varTheta(G) \right\rvert=p,p^{2},p^{3} $ in the following lemmas.

It will be useful for our calculations to derive the explicit formula for $ \left(v\alpha_{1}^{a_{1}}\alpha_{2}^{ a_{2}}\alpha_{3}^{a_{3}}\right)^{r} $ for natural numbers $ r, a_{i} $ and an element $ v=\rho^{v_{1}}\sigma^{v_{2}}\tau^{v_{3}} \in M_{1} $. For this we first note that we have 
\begin{align}\label{E66}
\alpha_{1}^{a_{1}}\alpha_{2}^{a_{2}}\alpha_{3}^{a_{3}}\cdot v&=\begin{bmatrix} 1 & a_{1} & a_{3}\\ 0 & 1 & 0 \\ 0 & a_{2} & 1 \end{bmatrix}\cdot v \nonumber \\
&=\rho^{a_{1}v_{2}+\frac{1}{2}a_{2}v_{2}\left(v_{2}-1\right)+a_{3}v_{3}}v\tau^{a_{2}v_{2}}.
\end{align}
Now by using (\ref{E109}) and (\ref{E66}) we find
\begin{align}\label{E92}
\left(v\alpha_{1}^{a_{1}}\alpha_{2}^{ a_{2}}\alpha_{3}^{a_{3}}\right)^{r}&=\left(\prod_{j=0}^{r-1}\rho^{k_{j}}v\tau^{ a_{2}v_{2}j}\right)\left(\alpha_{1}^{a_{1}}\alpha_{2}^{ a_{2}}\alpha_{3}^{a_{3}}\right)^{r}\nonumber\\
&=\rho^{l_{1}}v^{r}\tau^{l_{2}a_{2}v_{2}}\left(\alpha_{1}^{a_{1}}\alpha_{2}^{ a_{2}}\alpha_{3}^{a_{3}}\right)^{r},
\end{align}
(note order of the product matters and is in increasing $ j $) with \[ k_{j}\stackrel{\mathrm{def}}{=}\left(a_{1}v_{2}j+\frac{1}{2} a_{2}a_{3}v_{2}j\left(j-1\right)+\frac{1}{2} a_{2}v_{2}\left(v_{2}-1\right)j+a_{3}v_{3}j\right), \] 
for $ j=0,...,r-1 $,
\begin{align*}
l_{1}&=l_{1}(r)\stackrel{\mathrm{def}}{=}\sum_{j=1}^{r-1}k_{j}+\frac{a_{2}v_{2}^{2}}{2}\sum_{j=1}^{r-2}j\left(j+1\right)\ \text{and}\\
l_{2}&=l_{2}(r)\stackrel{\mathrm{def}}{=}\sum_{j=1}^{r-1}j. 
\end{align*}
The second summation in $ l_{1} $ arises by moving the $ \tau^{a_{2}v_{2}j} $ terms to gather them in one place using the relation $ \tau\sigma=\rho\sigma\tau $. Note, here $ l_{1} $ and $ l_{2} $ are divisible by $ r $ for $ r>3 $ a prime number, so we find
\begin{align}\label{E110}
\left(v\alpha_{1}^{a_{1}}\alpha_{2}^{ a_{2}}\alpha_{3}^{a_{3}}\right)^{p}=1 
\end{align} for every $ v \in M_{1} $ since $ p>3 $. Note further that in (\ref{E92}), when $ a_{2}=0 $, we have
\begin{align}\label{E93}
\left(v\alpha_{1}^{a_{1}}\alpha_{3}^{a_{3}}\right)^{r}\in v^{r}\alpha_{1}^{ra_{1}}\alpha_{3}^{ra_{3}} \left\langle\rho\right\rangle,
\end{align}   
where $  \left\langle\rho\right\rangle $ is a normal subgroup of $ \mathrm{Hol}(M_{1}) $ since it is a characteristic subgroup of $ M_{1} $.

It will further be useful, when finding the non-isomorphic braces, to derive the explicit formula for a term of the form $ \alpha\left(v\alpha_{1}^{a_{1}}\alpha_{2}^{a_{2}}\alpha_{3}^{a_{3}}\right)\alpha^{-1} $ for an automorphism $ \alpha \in \mathrm{Aut}(M_{1}) $. Now if \[ \alpha=\gamma\beta \in \mathrm{Aut}(M_{1})\cong C_{p}^{2}\rtimes\mathrm{GL}_{2}(\mathbb{F}_{p}) \ \text{where} \]\[ \gamma\stackrel{\mathrm{def}}{=} \alpha_{1}^{r_{1}}\alpha_{3}^{r_{3}} \in C_{p}^{2}, \ \beta \stackrel{\mathrm{def}}{=} \begin{pmatrix} b_{1} & b_{2} \\ b_{3} & b_{4} \end{pmatrix}  \in \mathrm{GL}_{2}(\mathbb{F}_{p}),\] then, using (\ref{E95}), we have   \[\alpha\left(v\alpha_{1}^{a_{1}}\alpha_{2}^{ a_{2}}\alpha_{3}^{a_{3}}\right)\alpha^{-1}=\left(\alpha\cdot v\right)\alpha_{1}^{r_{1}}\alpha_{3}^{r_{3}}\alpha_{1}^{\left(a_{1}-a_{2} a_{3}\right)b_{4}-a_{3}b_{3}}\alpha_{3}^{-\left(a_{1}-a_{2} a_{3}\right)b_{2}+a_{3}b_{1}}\beta\alpha_{2}^{ a_{2}}\beta^{-1}\alpha_{1}^{-r_{1}}\alpha_{3}^{-r_{3}},\]
where using the section of the exact sequence in Proposition \ref{P3}, we have \[ \beta \cdot v =  \rho^{\mathrm{det}(\beta)v_{1}+\frac{1}{2}\left(b_{1}b_{3}v_{2}+b_{2}b_{4}v_{3}\right)}\left(\sigma^{b_{1}}\tau^{b_{3}}\right)^{v_{2}}\left(\sigma^{b_{2}}\tau^{b_{4}}\right)^{v_{3}}, \] 
which gives 
\begin{align}\label{E112}
\alpha\cdot &v=\rho^{\widetilde{v}_{1}}\sigma^{b_{1}v_{2}+b_{2}v_{3}}\tau^{b_{3}v_{2}+b_{4}v_{3}}, \ \text{where}\\
\widetilde{v}_{1}&\stackrel{\mathrm{def}}{=}\mathrm{det}(\beta)v_{1}+\frac{1}{2}\left(b_{3}b_{1}v_{2}^{2}+b_{4}b_{2}v_{3}^{2}\right)+b_{2}b_{3}v_{2}v_{3}+r_{1}\left(b_{1}v_{2}+b_{2}v_{3}\right)+r_{3}\left(b_{3}v_{2}+b_{4}v_{3}\right)\nonumber
\end{align}

The above implies that, when $ a_{2}=0 $, we have 
\begin{align}\label{E15}
\alpha\left(v\alpha_{1}^{a_{1}}\alpha_{3}^{a_{3}}\right)\alpha^{-1}=\left(\alpha\cdot v\right)\alpha_{1}^{a_{1}b_{4}-a_{3}b_{3}}\alpha_{3}^{a_{3}b_{1}-a_{1}b_{2}},
\end{align}
with $ \alpha\cdot v $ as given in (\ref{E112}), and when $  a_{2}\neq0 $, we can set $ b_{2}=0 $, since we want to remain within  $ \left\langle\alpha_{1},\alpha_{2},\alpha_{3}\right\rangle $, and in this case since we have  
\[\beta\alpha_{2}^{a_{2}}\beta^{-1}=\alpha_{1}^{\frac{1}{2}a_{2}b_{4}\left(b_{1}^{-1}-1\right)}\alpha_{2}^{a_{2}b_{1}^{-1}b_{4}},  \]
so (when $ b_{2}=0 $) we get 
\begin{align}\label{E16}
\alpha\left(v\alpha_{1}^{a_{1}}\alpha_{2}^{a_{2}}\alpha_{3}^{a_{3}}\right)\alpha^{-1}=\left(\alpha\cdot v\right)\alpha_{1}^{a_{1}b_{4}-a_{3}b_{3}+r_{3}a_{2}b_{1}^{-1}b_{4}+\frac{1}{2}a_{2}b_{4}\left(b_{1}^{-1}-1\right)}\alpha_{2}^{a_{2}b_{1}^{-1}b_{4}}\alpha_{3}^{a_{3}b_{1}},
\end{align}
where $ \alpha\cdot v $ can be calculated using (\ref{E112}).
\begin{lemma}\label{L16}
	For $ \left\lvert\varTheta(G) \right\rvert=p $ there are exactly $ 2(p-1) $  $ M_{1} $-skew braces of $ M_{1} $ type and two  $ C_{p}^{3} $-skew braces of $ M_{1} $ type.
\end{lemma}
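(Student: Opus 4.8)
The plan is to follow the strategy of Section~\ref{SB2}: parametrise the regular subgroups $G\subseteq\mathrm{Hol}(M_1)$ of order $p^{3}$ with $\lvert\varTheta(G)\rvert=p$, sort them under $\mathrm{Aut}(M_1)$-conjugacy, and read off the isomorphism type of each. Since, as noted above, we may assume $\varTheta(G)$ lies in the Sylow $p$-subgroup $\langle\alpha_1,\alpha_2,\alpha_3\rangle$, write $G=\langle\eta_1,\eta_2,v\alpha\rangle$ with $H_1=\langle\eta_1,\eta_2\rangle$ one of the $p+1$ order-$p^{2}$ subgroups of $M_1$, with $\alpha=\alpha_1^{a_1}\alpha_2^{a_2}\alpha_3^{a_3}$, $(a_1,a_2,a_3)\neq 0$, a generator of $\varTheta(G)$, and with $v=\rho^{v_1}\sigma^{v_2}\tau^{v_3}\in M_1$; note $\lvert\varTheta(G)\rvert=p$ automatically here.

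First I would establish a regularity criterion. Using that every order-$p^{2}$ subgroup of $M_1$ contains the centre $\langle\rho\rangle=[M_1,M_1]$ (which is characteristic), and that an order-$p$ automorphism preserving $H_1$ acts trivially on the $C_p$-quotient $M_1/H_1$, one shows that $G$ is regular if and only if $H_1$ is $\alpha$-invariant and $v\notin H_1$; the relation $(v\alpha)^{p}=1$ needed for $\lvert G\rvert=p^{3}$ comes for free from (\ref{E110}). In that situation $H_1\trianglelefteq G$, $G/H_1\cong C_p$, and $G$ is completely determined by the pair $(H_1,\overline G)$, where $\overline G=G/H_1$ is a subgroup of $\langle M_1,\alpha\rangle/H_1\cong C_p\times C_p$ that must be one of the $p-1$ lines transversal both to $M_1/H_1$ and to $\langle\bar\alpha\rangle$.

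Next I would enumerate this data. The $p+1$ subgroups $H_1$ are $\langle\rho,\tau\rangle$ and $\langle\rho,\sigma\tau^{d}\rangle$ for $d=0,\dots,p-1$; from the action computed in (\ref{E66}), namely $\sigma^{\alpha}=\rho^{a_1}\sigma\tau^{a_2}$ and $\tau^{\alpha}=\rho^{a_3}\tau$, the subgroup $\langle\rho,\tau\rangle$ is invariant under every element of $\langle\alpha_1,\alpha_2,\alpha_3\rangle$, while $\langle\rho,\sigma\tau^{d}\rangle$ is $\alpha$-invariant precisely when $a_2=0$, i.e. $\alpha\in\langle\alpha_1,\alpha_3\rangle$. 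Counting triples $(H_1,\langle\alpha\rangle,\overline G)$ then gives $(p^{2}+p+1)(p-1)$ regular subgroups with $H_1=\langle\rho,\tau\rangle$ and $p(p+1)(p-1)$ with $H_1$ of the other kind, a raw total of $(p-1)(2p^{2}+2p+1)$. The isomorphism type is pinned down by the remark that, $G/H_1$ being cyclic, $G\cong C_p^{3}$ exactly when $v\alpha$ centralises $H_1$ and $G\cong M_1$ otherwise; since $C_{M_1}(H_1)=H_1$, the centralising condition reduces (via $[\tau,\sigma]=\rho$ and (\ref{E66})) to a single linear congruence on $(a_1,a_2,a_3,v_1,v_2,v_3)$ — for $H_1=\langle\rho,\tau\rangle$ it is simply $v_2=a_3$ — which one checks is unchanged under rescaling the generator $v\alpha$, hence descends to a genuine condition on $G$.

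Finally I would carry out the $\mathrm{Aut}(M_1)$-reduction and count orbits, recording the stabilisers so as to obtain the automorphism groups via (\ref{E2}). Using that $\mathrm{Aut}(M_1)$ acts transitively on the $p+1$ subgroups $H_1$ (through the action of $\mathrm{GL}_2(\mathbb{F}_p)$ on the lines of $M_1/Z$), reduce to $H_1=\langle\rho,\tau\rangle$, then merge the remaining pairs $(\langle\alpha\rangle,\overline G)$ into orbits using the explicit conjugation formulas (\ref{E112}), (\ref{E15}), (\ref{E16}). Here lies the main obstacle: the Sylow $p$-subgroup $\langle\alpha_1,\alpha_2,\alpha_3\rangle$ of $\mathrm{Aut}(M_1)$ is not normal, so one must track which normalisation is in force — when $a_2\neq 0$ the group $\langle\alpha\rangle$ lies in a unique Sylow, so a conjugating automorphism must be taken with $b_2=0$ (equivalently one works inside $N_{\mathrm{Aut}(M_1)}(\langle\alpha_1,\alpha_2,\alpha_3\rangle)$), whereas when $a_2=0$ one has $\langle\alpha\rangle\subseteq C_p^{2}$, which lies in every Sylow, so the full group $\mathrm{Aut}(M_1)$ may be used; combined with the cocycle-type quadratic corrections in (\ref{E92}) and (\ref{E112}), this makes the orbit bookkeeping the bulk of the work. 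Sorting the outcome yields $2(p-1)$ conjugacy classes with $G\cong M_1$ and $2$ with $G\cong C_p^{3}$, which is the assertion of the lemma.
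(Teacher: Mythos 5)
Your structural setup is sound and in fact runs parallel to the paper's own argument: reduce to $\varTheta(G)\subseteq\langle\alpha_1,\alpha_2,\alpha_3\rangle$, observe that $G\cap M_1$ is one of the $p+1$ maximal subgroups and must be $\varTheta(G)$-invariant (forcing $a_2=0$ unless $G\cap M_1=\langle\rho,\tau\rangle$), use the transitivity of $\mathrm{Aut}(M_1)$ on these maximal subgroups to normalise to $H_1=\langle\rho,\tau\rangle$, note $(v\alpha)^p=1$ from (\ref{E110}), and decide the isomorphism type by whether $v\alpha$ centralises $H_1$ — your congruence $v_2=a_3$ is exactly the paper's criterion ``$G$ abelian iff $a_3=1$'' once $v$ is normalised to $\sigma$. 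The ``lines in $C_p\times C_p$'' packaging of the enumeration is a clean gloss, and your raw count $(p-1)(2p^2+2p+1)$ is correct \emph{for the chosen Sylow} (it equals $(p^3-1)+p(p^2-1)$, matching the paper's (\ref{E50})--(\ref{E51}) together with the $\langle\rho,\sigma\tau^d\rangle$ families); just be aware it is not the total number of regular subgroups with $\lvert\varTheta(G)\rvert=p$, so it cannot be checked directly against the Hopf--Galois counts of Lemma \ref{L16H}.

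The genuine gap is the last step. The entire content of the lemma — why the answer is $2(p-1)$ and $2$ rather than some other function of $p$ — lies in the $\mathrm{Aut}(M_1)$-orbit computation, and you assert its outcome without performing it. A complete proof must (i) exhibit explicit orbit representatives, e.g.\ $\langle\rho,\tau,\sigma\alpha_1\rangle$, $\langle\rho,\tau,\sigma\alpha_2\rangle$, $\langle\rho,\tau,\sigma\alpha_3^{c}\rangle$ and $\langle\rho,\tau,\sigma\alpha_2\alpha_3^{c}\rangle$ for $c=1,\dots,p-1$; (ii) show every regular subgroup in your list is conjugate to one of these — this uses the explicit formula (\ref{E16}) restricted to automorphisms with $b_2=0$ (those preserving $\langle\rho,\tau\rangle$) to kill the $\alpha_1$-exponent by a suitable lower-triangular matrix and to normalise the $\alpha_2$-exponent to $0$ or $1$; and (iii) prove the representatives are pairwise non-conjugate, e.g.\ that no automorphism fixing $\langle\rho,\tau\rangle$ can change the exponent $c$ of $\alpha_3$ or create/destroy an $\alpha_2$-component. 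None of (i)--(iii) is a formality: for instance, distinguishing $\langle\rho,\tau,\sigma\alpha_1\rangle$ from $\langle\rho,\tau,\sigma\alpha_3^{c}\rangle$ with $c\neq 1$ (both of type $M_1$) requires tracking how conjugation mixes $\alpha_1$ and $\alpha_3$ via the $\mathrm{GL}_2(\mathbb{F}_p)$-action (\ref{E95}), and the quadratic correction terms in (\ref{E16}) are what prevent the $\alpha_2$-families from collapsing further. As written, the proposal establishes the parametrisation of the regular subgroups but not the classification up to isomorphism of skew braces, which is the statement being proved.
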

\begin{proof}
	If $ G \subseteq \mathrm{Hol}(M_{1}) $ with $ \left\lvert\varTheta(G) \right\rvert=p $ is a regular subgroup, then we can assume, without loss of generality, that $ \varTheta(G) \subseteq \left\langle\alpha_{1},\alpha_{2},\alpha_{3}\right\rangle $ is a subgroup of order $ p $. We also have $ G\cap M_{1}  $ is a subgroup of order $ p^{2} $. Therefore, $ \varTheta(G) $ is one of
	\begin{align*}\label{E48}
	\left\langle\alpha_{1}^{a_{1}}\alpha_{2}^{a_{2}}\alpha_{3}^{a_{3}}\right\rangle \ \text{for} \ a_{1},a_{2},a_{3}=0,...,p-1 \ \text{with} \ (a_{1},a_{2},a_{3})\neq (0,0,0),
	\end{align*}
	(each occurring $ p-1 $ times) and $ G\cap M_{1}  $ is one of \[ \left\langle \rho,\tau \right\rangle, \left\langle \rho, \sigma\tau^{d}\right\rangle \ \text{for} \ d=0,...,p-1.\]
	Suppose we consider subgroups of the form 
	\[G=\left\langle \rho, \sigma\tau^{d},h \right\rangle \ \text{where} \ h\stackrel{\mathrm{def}}{=}\tau\alpha_{1}^{a_{1}}\alpha_{2}^{a_{2}}\alpha_{3}^{a_{3}}. \]
	Note, using (\ref{E66}), we must have \[h\left(\sigma\tau^{d}\right)h^{-1}=\tau\left(\alpha_{1}^{a_{1}}\alpha_{2}^{a_{2}}\alpha_{3}^{a_{3}}\cdot\left(\sigma\tau^{d}\right)\right)\tau^{-1}=\rho^{a_{3}d+a_{1}+1}\sigma\tau^{a_{2}+d} \in \left\langle \rho, \sigma\tau^{d}\right\rangle ,\]
	and since for a natural number $ r $  we have \[\left(\sigma\tau^{d}\right)^{r}=\rho^{\frac{1}{2}dr\left(r-1\right)}\sigma^{r}\tau^{rd},\]
	the pairing is possible, when $ a_{2}=0 $. Therefore, we consider subgroups of the form 
	\[ G=\left\langle \rho, \sigma\tau^{d},h  \right\rangle\ \text{where} \ h\stackrel{\mathrm{def}}{=}\tau\alpha_{1}^{a_{1}}\alpha_{3}^{a_{3}}.\]
	But now since the automorphism of $ M_{1} $ corresponding to $ \begin{psmallmatrix} d & -1 \\ 1-d & 1 \end{psmallmatrix} \in \mathrm{GL}_{2}(\mathbb{F}_{p}) $ maps the subgroup $ \left\langle \rho, \sigma\tau^{d}\right\rangle $ to $ \left\langle \rho, \tau\right\rangle $, we can assume every one of these skew braces is isomorphic to one containing the subgroup $ \left\langle \rho, \tau\right\rangle $.
	
	Hence, up to conjugation, we must have \[G=\left\langle \rho, \tau, g \right\rangle \ \text{where} \ g\stackrel{\mathrm{def}}{=}\sigma\alpha_{1}^{a_{1}}\alpha_{2}^{a_{2}}\alpha_{3}^{a_{3}}.\]
	Note, using (\ref{E66}), we have 
	\begin{align*}
	g\tau g^{-1}&=\sigma\left(\alpha_{1}^{a_{1}}\alpha_{2}^{a_{2}}\alpha_{3}^{a_{3}}\cdot\tau\right) \sigma^{-1}=\rho^{\left(a_{3}-1\right)}\tau \in \left\langle \rho,\tau \right\rangle \ \text{and} \\
	g\rho g^{-1}&=\sigma\left(\alpha_{1}^{a_{1}}\alpha_{2}^{a_{2}}\alpha_{3}^{a_{3}}\cdot\rho\right) \sigma^{-1}=\rho \in \left\langle \rho,\tau \right\rangle,
	\end{align*}
	so the pairing is possible. Further, it follows from (\ref{E110}) that $ g^{p}=1 $. Now, for $ r\neq 0 $, using (\ref{E66}), we have
	\begin{align}\label{E49}
	g\tau^{r}=\left(\sigma\alpha_{1}^{a_{1}}\alpha_{2}^{a_{2}}\alpha_{3}^{a_{3}}\right)\tau^{r} =\rho^{ra_{3}}\sigma\tau^{r}\alpha_{1}^{a_{1}}\alpha_{2}^{a_{2}}\alpha_{3}^{a_{3}}=\rho^{r\left(a_{3}-1\right)}\tau^{r}g,
	\end{align}
	so $ G $ is abelian if and only if $ a_{3}=1 $. Furthermore, all these subgroups are regular since they have order $ p^{3} $ and $ \left\langle \rho,\tau \right\rangle \cup \{\sigma\} \subseteq \mathrm{Orb}(1) $, i.e., since $ \lvert\mathrm{Orb}(1)\rvert>p^{2} $, their action on $ M_{1} $ is transitive. 
	
	Therefore, for $ a_{3}=1 $ we find regular subgroups isomorphic to $ C_{p}^{3} $ of the form
	\begin{align}\label{E50}
	&\left\langle \rho,\tau, \sigma\alpha_{1}^{a}\alpha_{3} \right\rangle, \left\langle \rho,\tau, \sigma\alpha_{1}^{a}\alpha_{2}^{b}\alpha_{3} \right\rangle \cong C_{p}^{3}\nonumber \\ 
	&\text{for} \ a=0,...,p-1, \ b=1,...,p-1,
	\end{align}
	and for $ a_{3}\neq 1 $, setting $ r=\left(1-a_{3}\right)^{-1} $ in (\ref{E49}), we find regular subgroups isomorphic to $ M_{1} $ of the form  
	\begin{align}\label{E51}
	&\left\langle \rho,\tau, \sigma\alpha_{1}^{b} \right\rangle,\left\langle \rho,\tau, \sigma\alpha_{1}^{a}\alpha_{2}^{b} \right\rangle, \left\langle \rho,\tau, \sigma\alpha_{1}^{a}\alpha_{3}^{c} \right\rangle, \left\langle \rho,\tau, \sigma\alpha_{1}^{a}\alpha_{2}^{b}\alpha_{3}^{c} \right\rangle \cong M_{1} \nonumber\\
	&\text{for} \ a=0,...,p-1, \ b,c=1,...,p-1\ \text{with} \ c\neq 1.
	\end{align}
	
	To find the non-isomorphic skew braces corresponding to the above regular subgroups, we let \[ \alpha=\gamma\beta \in \mathrm{Aut}(M_{1})\cong C_{p}^{2}\rtimes\mathrm{GL}_{2}(\mathbb{F}_{p}) \ \text{where} \]\[ \gamma\stackrel{\mathrm{def}}{=} \alpha_{1}^{r_{1}}\alpha_{3}^{r_{3}} \in C_{p}^{2}, \ \beta \stackrel{\mathrm{def}}{=} \begin{pmatrix} b_{1} & b_{2} \\ b_{3} & b_{4} \end{pmatrix}  \in \mathrm{GL}_{2}(\mathbb{F}_{p}),\] 
	and we work with automorphisms which fix the subgroup $ \left\langle \rho, \tau\right\rangle $, i.e., when $ b_{2}=0 $. In such case, using (\ref{E16}), we have 
	\[\alpha\left(\sigma\alpha_{1}^{a_{1}}\alpha_{2}^{a_{2}}\alpha_{3}^{a_{3}}\right)\alpha^{-1}=\left(\alpha\cdot \sigma\right)\alpha_{1}^{a_{1}b_{4}-a_{3}b_{3}+r_{3}a_{2}b_{1}^{-1}b_{4}+\frac{1}{2}a_{2}b_{4}\left(b_{1}^{-1}-1\right)}\alpha_{2}^{a_{2}b_{1}^{-1}b_{4}}\alpha_{3}^{a_{3}b_{1}},\]
	where using (\ref{E112})
	\[\alpha\cdot \sigma=\rho^{\frac{1}{2}b_{1}b_{3}-r_{1}b_{1}+r_{3}b_{3}}\sigma^{b_{1}}\tau^{b_{3}}.\]
	Now since
	\[\alpha\left(\sigma\alpha_{1}^{a}\alpha_{3}^{c}\right)\alpha^{-1}=\left(\alpha\cdot \sigma\right)\alpha_{1}^{ab_{4}-cb_{3}}\alpha_{3}^{cb_{1}}\in \sigma^{b_{1}}\alpha_{1}^{ab_{4}-cb_{3}}\alpha_{3}^{cb_{1}}\left\langle \rho, \tau\right\rangle,\]
	we have 
	\[\alpha\left(\sigma\alpha_{1}^{a}\alpha_{3}^{c}\right)^{b_{1}^{-1}}\alpha^{-1}\in \sigma\alpha_{1}^{ab_{1}^{-1}b_{4}-cb_{1}^{-1}b_{3}}\alpha_{3}^{c}\left\langle \rho, \tau\right\rangle.\]
	Thus if we conjugate the subgroup $ \left\langle \rho,\tau, \sigma\alpha_{3}^{c} \right\rangle $ with the automorphism corresponding to $ \begin{psmallmatrix} 1 & 0 \\ -ac^{-1} & 1 \end{psmallmatrix} $ we get $ \left\langle \rho,\tau, \sigma\alpha_{1}^{a}\alpha_{3}^{c} \right\rangle $, and now the subgroups $   \left\langle \rho,\tau, \sigma\alpha_{3}^{c} \right\rangle  $ for different values of $ c $ cannot be conjugate to each other.
	
	Next, working similar to above, we have 
	\[\alpha\left(\sigma\alpha_{1}^{a}\alpha_{2}^{b}\alpha_{3}^{c}\right)^{b_{1}^{-1}}\alpha^{-1}\in\sigma\alpha_{1}^{ab_{1}^{-1}b_{4}-cb_{1}^{-1}b_{3}+r_{3}bb_{1}^{-2}b_{4}+\frac{1}{2}bb_{1}^{-1}b_{4}\left(b_{1}^{-1}-1\right)\left(c+1\right)}\alpha_{2}^{bb_{1}^{-2}b_{4}}\alpha_{3}^{c}\left\langle \rho, \tau\right\rangle.\]
	Thus, if we conjugate the subgroup $ \left\langle \rho,\tau, \sigma\alpha_{2}\alpha_{3}^{c} \right\rangle $ with the automorphism corresponding to $  \begin{psmallmatrix} 1 & 0 \\ -ac^{-1} & b \end{psmallmatrix}  $, we get $ \left\langle \rho,\tau, \sigma\alpha_{1}^{a}\alpha_{2}^{b}\alpha_{3}^{c} \right\rangle $, and now again the subgroups $ \left\langle \rho,\tau, \sigma\alpha_{2}\alpha_{3}^{c} \right\rangle $ for different values of $ c $ cannot be conjugate. Finally, we note that 
	\[\alpha\left(\sigma\alpha_{1}^{a}\alpha_{2}^{b}\right)^{b_{1}^{-1}}\alpha^{-1}\in  \sigma\alpha_{1}^{ab_{1}^{-1}b_{4}+r_{3}bb_{1}^{-2}b_{4}+\frac{1}{2}bb_{1}^{-1}b_{4}\left(b_{1}^{-1}-1\right)}\alpha_{2}^{bb_{1}^{-2}b_{4}}\left\langle \rho, \tau\right\rangle,\]
	so 
	\[\alpha\left(\sigma\alpha_{1}^{a}\right)^{b_{1}^{-1}}\alpha^{-1}\in  \sigma\alpha_{1}^{ab_{1}^{-1}b_{4}}\left\langle \rho, \tau\right\rangle,\]
	which implies that conjugating the subgroup $ \left\langle \rho,\tau, \sigma\alpha_{1} \right\rangle $ with the automorphism corresponding to $  \begin{psmallmatrix} 1 & 0 \\ 0 & b \end{psmallmatrix}  $, we get $ \left\langle \rho,\tau, \sigma\alpha_{1}^{b} \right\rangle $, and conjugating the subgroup $ \left\langle \rho,\tau, \sigma\alpha_{2} \right\rangle $ with the automorphism corresponding to $  \alpha_{3}^{ab^{-1}}\begin{psmallmatrix} 1 & 0 \\ 0 & b \end{psmallmatrix}  $, we get $ \left\langle \rho,\tau, \sigma\alpha_{1}^{a}\alpha_{2}^{b} \right\rangle $.
	
	Therefore, we have non-isomorphic skew braces
	\begin{align}\label{E54}
	&\left\langle \rho,\tau, \sigma\alpha_{3} \right\rangle, \left\langle\rho, \tau, \sigma\alpha_{2}\alpha_{3}  \right\rangle   \cong C_{p}^{3}; \\ & \left\langle  \rho, \tau, \sigma\alpha_{1}  \right\rangle, \left\langle \rho,\tau, \sigma\alpha_{2} \right\rangle,\nonumber
	\left\langle \rho,\tau, \sigma\alpha_{3}^{c} \right\rangle, \left\langle\rho, \tau, \sigma\alpha_{2}\alpha_{3}^{c}  \right\rangle  \cong M_{1} \ \text{for} \ c=2,...,p-1, 
	\end{align}
	and counting them we find that there are $ 2(p-1) $ $ M_{1} $-skew braces of $ M_{1} $ type and two $ C_{p}^{3} $-skew braces of $ M_{1} $ type.	
\end{proof}
\begin{lemma}\label{L16H}
	There are \[ (p^{3}-p^{2}-1)(p+1) \] Hopf-Galois structures of $ M_{1} $ type on Galois extensions of fields with Galois group $ G\cong M_{1} $ and $ \left\lvert\varTheta(G) \right\rvert=p $, and exactly \[(p^{3}-1)(p+1)p^{2}\] Hopf-Galois structures of $ M_{1} $ type on Galois extensions of fields with Galois group $ G\cong C_{p}^{3} $ and $ \left\lvert\varTheta(G) \right\rvert=p $. 
\end{lemma}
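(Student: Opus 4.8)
The plan is to feed the classification from Lemma~\ref{L16} into the orbit--stabiliser formula~(\ref{E1}). By Lemma~\ref{L16} and list~(\ref{E54}), the $M_{1}$-type skew braces with $\lvert\varTheta(G)\rvert=p$ comprise the two $C_{p}^{3}$-classes $\langle\rho,\tau,\sigma\alpha_{3}\rangle$, $\langle\rho,\tau,\sigma\alpha_{2}\alpha_{3}\rangle$ and the $M_{1}$-classes $\langle\rho,\tau,\sigma\alpha_{1}\rangle$, $\langle\rho,\tau,\sigma\alpha_{2}\rangle$, $\langle\rho,\tau,\sigma\alpha_{3}^{c}\rangle$ and $\langle\rho,\tau,\sigma\alpha_{2}\alpha_{3}^{c}\rangle$ for $c=2,\dots,p-1$. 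For each of these six types I would compute $\lvert\mathrm{Aut}_{\mathcal{B}r}\rvert$ via~(\ref{E2}), i.e., the order of the stabiliser $\{\alpha\in\mathrm{Aut}(M_{1}):\alpha G\alpha^{-1}=G\}$. Using $\lvert\mathrm{Aut}(M_{1})\rvert=(p^{2}-1)(p-1)p^{3}$ (Proposition~\ref{P3}) and $\lvert\mathrm{Aut}(C_{p}^{3})\rvert=\lvert\mathrm{GL}_{3}(\mathbb{F}_{p})\rvert=(p^{3}-1)(p^{2}-1)(p-1)p^{3}$, so that $\lvert\mathrm{Aut}(C_{p}^{3})\rvert/\lvert\mathrm{Aut}(M_{1})\rvert=p^{3}-1$, formula~(\ref{E1}) gives
\[ e(M_{1},M_{1},p)=\sum_{G_{M_{1}}(p)}\frac{\lvert\mathrm{Aut}(M_{1})\rvert}{\lvert\mathrm{Aut}_{\mathcal{B}r}\rvert}\qquad\text{and}\qquad e(C_{p}^{3},M_{1},p)=(p^{3}-1)\sum_{(C_{p}^{3})_{M_{1}}(p)}\frac{\lvert\mathrm{Aut}(M_{1})\rvert}{\lvert\mathrm{Aut}_{\mathcal{B}r}\rvert}, \]
and equivalently one may count regular subgroups directly and multiply by the index.

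The heart of the argument is the stabiliser computation for a generic $G=\langle\rho,\tau,g\rangle$ with $g=\sigma\alpha_{1}^{a_{1}}\alpha_{2}^{a_{2}}\alpha_{3}^{a_{3}}$. Since $M_{1}\trianglelefteq\mathrm{Hol}(M_{1})$, every $\alpha$ with $\alpha G\alpha^{-1}=G$ fixes $G\cap M_{1}=\langle\rho,\tau\rangle$, so --- exactly as in the last part of the proof of Lemma~\ref{L16} --- I may write $\alpha=\gamma\beta$ with $\gamma=\alpha_{1}^{r_{1}}\alpha_{3}^{r_{3}}$ and $\beta=\begin{psmallmatrix}b_{1}&0\\b_{3}&b_{4}\end{psmallmatrix}\in\mathrm{GL}_{2}(\mathbb{F}_{p})$. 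Applying $\varTheta$ forces $\alpha$ to normalise $\langle\varTheta(g)\rangle$, which through the action~(\ref{E95}) constrains $\beta$ further (for instance $b_{3}=0$ when only $\alpha_{3}$ occurs in $g$, while $b_{2}=0$ is the relevant normalisation once $\alpha_{2}$ occurs). The residual condition is then $\alpha g\alpha^{-1}\in\langle\rho,\tau\rangle\langle g\rangle$, and substituting the conjugation formulas~(\ref{E15}), (\ref{E16}) and~(\ref{E112}) turns it into a short system of congruences in $(r_{1},r_{3},b_{1},b_{3},b_{4})$; counting solutions yields $\lvert\mathrm{Aut}_{\mathcal{B}r}(G)\rvert$ in each case. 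I expect the two $c$-families to give automorphism groups whose order is independent of $c$, so both sums are finite, and one then checks that the arithmetic collapses to $(p^{3}-p^{2}-1)(p+1)$ and $(p^{3}-1)(p+1)p^{2}$.

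The main obstacle is the bookkeeping in this stabiliser computation. One must track the $\langle\rho\rangle$-valued correction terms appearing in~(\ref{E92}) and in the exponent $\widetilde{v}_{1}$ of~(\ref{E112}) precisely enough to decide membership $\alpha g\alpha^{-1}\in G$ rather than merely $\varTheta(\alpha g\alpha^{-1})\in\varTheta(G)$; and the two families containing the factor $\alpha_{2}$, where one must expand $\beta\alpha_{2}^{a_{2}}\beta^{-1}=\alpha_{1}^{\frac{1}{2}a_{2}b_{4}(b_{1}^{-1}-1)}\alpha_{2}^{a_{2}b_{1}^{-1}b_{4}}$ and is forced to set $b_{2}=0$, are appreciably more delicate than the cases where $\varTheta(g)$ lies in the abelian subgroup $\langle\alpha_{1},\alpha_{3}\rangle$. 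A last point, guaranteed by Proposition~\ref{P2} but worth recording, is that passing between the ``$\mathrm{Aut}(M_{1})$-orbit size'' count and the ``number of isomorphism classes'' count introduces no over- or under-counting, so the six contributions are simply added.
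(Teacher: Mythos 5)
Your proposal follows essentially the same route as the paper: it takes the six families of skew braces from~(\ref{E54}), computes $\lvert\mathrm{Aut}_{\mathcal{B}r}\rvert$ as the stabiliser in $\mathrm{Aut}(M_{1})$ (with $b_{2}=0$ forced by preservation of $\langle\rho,\tau\rangle$, and the residual condition $\alpha g\alpha^{-1}\in G$ handled via~(\ref{E15}), (\ref{E16}) and~(\ref{E112})), and then sums via the orbit--stabiliser formula~(\ref{E1}); your expectation that the stabiliser orders are independent of $c$ is confirmed by the paper, and your reformulation $e(C_{p}^{3},M_{1},p)=(p^{3}-1)\sum\lvert\mathrm{Aut}(M_{1})\rvert/\lvert\mathrm{Aut}_{\mathcal{B}r}\rvert$ agrees with the paper's use of $\lvert\mathrm{Aut}(C_{p}^{3})\rvert$ in the numerator. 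The only thing separating this from a complete proof is carrying out the advertised congruence-counting, which the paper does explicitly to obtain stabiliser orders $(p-1)p^{3}$, $(p-1)p^{2}$, $(p-1)^{2}p^{2}$ and $(p-1)p^{2}$ for the four $M_{1}$-families and $(p-1)^{2}p^{2}$, $(p-1)p^{2}$ for the two $C_{p}^{3}$-families.
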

\begin{proof}
	To find the number of Hopf-Galois structures corresponding to the skew braces in (\ref{E54}) of Lemma \ref{L16}, 
	\begin{align*}
	&\left\langle \rho,\tau, \sigma\alpha_{3} \right\rangle, \left\langle\rho, \tau, \sigma\alpha_{2}\alpha_{3}  \right\rangle   \cong C_{p}^{3};\\ &  \left\langle  \rho, \tau, \sigma\alpha_{1}  \right\rangle, \left\langle \rho,\tau, \sigma\alpha_{2} \right\rangle,\nonumber
	\left\langle \rho,\tau, \sigma\alpha_{3}^{c} \right\rangle, \left\langle\rho, \tau, \sigma\alpha_{2}\alpha_{3}^{c}  \right\rangle  \cong M_{1} \ \text{for} \ c=2,...,p-1, 
	\end{align*}
	 we need to find the automorphism groups of these skew braces. 
	 
	 We let \[ \alpha=\gamma\beta \in \mathrm{Aut}(M_{1})\ \text{where}\  \gamma\stackrel{\mathrm{def}}{=} \alpha_{1}^{r_{1}}\alpha_{3}^{r_{3}}, \ \beta \stackrel{\mathrm{def}}{=} \begin{pmatrix} b_{1} & b_{2} \\ b_{3} & b_{4} \end{pmatrix},\]   	
	and since we need $ \alpha\left( \left\langle \rho,\tau \right\rangle \right)=  \left\langle \rho,\tau \right\rangle $, we must set  $ b_{2}=0 $. Now, if $ \alpha \in \mathrm{Aut}_{\mathcal{B}r}(\left\langle \rho,\tau, \sigma\alpha_{3}^{c} \right\rangle) $, since we have
	\[\alpha\left(\sigma\alpha_{3}^{c}\right)^{b_{1}^{-1}}\alpha^{-1}\in\sigma\alpha_{1}^{-cb_{1}^{-1}b_{3}}\alpha_{3}^{c}\left\langle \rho, \tau\right\rangle,\]
	we must have $ b_{3}=0 $, thus we find
	\[\mathrm{Aut}_{\mathcal{B}r}(\left\langle \rho,\tau, \sigma\alpha_{3}^{c} \right\rangle)=\left\{ \alpha \in \mathrm{Aut}(M_{1}) \mid \alpha= \alpha_{1}^{r_{1}}\alpha_{3}^{r_{3}} \begin{psmallmatrix} b_{1} & 0 \\ 0 & b_{4} \end{psmallmatrix} \right\}.\]
	If $ \alpha \in \mathrm{Aut}_{\mathcal{B}r}(\left\langle \rho,\tau, \sigma\alpha_{2}\alpha_{3}^{c} \right\rangle) $, since we have
	\[\alpha\left(\sigma\alpha_{2}\alpha_{3}^{c}\right)^{b_{1}^{-1}}\alpha^{-1}\in\sigma\alpha_{1}^{-cb_{1}^{-1}b_{3}+r_{3}b_{1}^{-2}b_{4}+\frac{1}{2}b_{1}^{-1}b_{4}\left(b_{1}^{-1}-1\right)\left(c+1\right)}\alpha_{2}^{b_{1}^{-2}b_{4}}\alpha_{3}^{c}\left\langle \rho, \tau\right\rangle,\]
	we must have $ b_{4}=b_{1}^{2} $ and 
	\begin{align*}
	&-cb_{1}^{-1}b_{3}+r_{3}b_{1}^{-2}b_{4}+\frac{1}{2}b_{1}^{-1}b_{4}\left(b_{1}^{-1}-1\right)\left(c+1\right)\\
	&=-cb_{1}^{-1}b_{3}+r_{3}+\frac{1}{2}b_{1}\left(b_{1}^{-1}-1\right)\left(c+1\right)=0,
	\end{align*}
	so we find 
	\[\mathrm{Aut}_{\mathcal{B}r}(\left\langle \rho,\tau, \sigma\alpha_{2}\alpha_{3}^{c} \right\rangle)=\left\{ \alpha \in \mathrm{Aut}(M_{1}) \mid \alpha= \alpha_{1}^{r_{1}}\alpha_{3}^{cb_{1}^{-1}b_{3}+\frac{1}{2}\left(b_{1}-1\right)\left(c+1\right)} \begin{psmallmatrix} b_{1} & 0 \\ b_{3} & b_{1}^{2} \end{psmallmatrix} \right\}.\]
	If $ \alpha \in \mathrm{Aut}_{\mathcal{B}r}(\left\langle  \rho, \tau, \sigma\alpha_{1}  \right\rangle) $, since we have
	\[\alpha\left(\sigma\alpha_{1}\right)^{b_{1}^{-1}}\alpha^{-1}\in\sigma\alpha_{1}^{b_{1}^{-1}b_{4}}\left\langle \rho, \tau\right\rangle,\]
	we must have $ b_{1}=b_{4} $, and we find 
	\[\mathrm{Aut}_{\mathcal{B}r}(\left\langle  \rho, \tau, \sigma\alpha_{1}  \right\rangle)=\left\{ \alpha \in \mathrm{Aut}(M_{1}) \mid \alpha= \alpha_{1}^{r_{1}}\alpha_{3}^{r_{3}} \begin{psmallmatrix} b_{1} & 0 \\ b_{3} & b_{1} \end{psmallmatrix} \right\}.\]
	Finally, if $ \alpha \in \mathrm{Aut}_{\mathcal{B}r}(\left\langle \rho,\tau, \sigma\alpha_{2} \right\rangle) $, since we have
	\[\alpha\left(\sigma\alpha_{2}\right)^{b_{1}^{-1}}\alpha^{-1}\in\sigma\alpha_{1}^{r_{3}b_{1}^{-2}b_{4}+\frac{1}{2}b_{1}^{-1}b_{4}\left(b_{1}^{-1}-1\right)}\alpha_{2}^{b_{1}^{-2}b_{4}}\left\langle \rho, \tau\right\rangle,\]
	we must have $ b_{4}=b_{1}^{2} $ and $ r_{3}=\frac{1}{2}(b_{1}-1) $, we find 
	\[\mathrm{Aut}_{\mathcal{B}r}(\left\langle \rho,\tau, \sigma\alpha_{2} \right\rangle)=\left\{ \alpha \in \mathrm{Aut}(M_{1}) \mid \alpha= \alpha_{1}^{r_{1}}\alpha_{3}^{\frac{1}{2}\left(b_{1}-1\right)} \begin{psmallmatrix} b_{1} & 0 \\ b_{3} & b_{1}^{2} \end{psmallmatrix} \right\}.\]
	
	Therefore, we have 
	\begin{align*}
	&e(M_{1},M_{1},p)= \sum_{(M_{1})_{M_{1}}(p)}\dfrac{\left\lvert\mathrm{Aut}(M_{1})\right\rvert}{\left\lvert\mathrm{Aut}_{\mathcal{B}r}((M_{1})_{M_{1}})\right\rvert}=\\
	&\dfrac{\left\lvert\mathrm{Aut}(M_{1})\right\rvert}{\left\lvert\mathrm{Aut}_{\mathcal{B}r}(\left\langle  \rho, \tau, \sigma\alpha_{1}  \right\rangle)\right\rvert}+\dfrac{\left\lvert\mathrm{Aut}(M_{1})\right\rvert}{\left\lvert \mathrm{Aut}_{\mathcal{B}r}(\left\langle \rho,\tau, \sigma\alpha_{2} \right\rangle)\right\rvert }+\sum_{c=2}^{p-1}\dfrac{\left\lvert\mathrm{Aut}(M_{1})\right\rvert}{\left\lvert\mathrm{Aut}_{\mathcal{B}r}(\left\langle \rho,\tau, \sigma\alpha_{3}^{c} \right\rangle)\right\rvert}+\dfrac{\left\lvert\mathrm{Aut}(M_{1})\right\rvert}{ \left\lvert\mathrm{Aut}_{\mathcal{B}r}(\left\langle \rho,\tau, \sigma\alpha_{2}\alpha_{3}^{c} \right\rangle)\right\rvert}\\
	&=(p^{2}-1)(p-1)p^{3}\left(\dfrac{1}{(p-1)p^{3}}+\dfrac{1}{(p-1)p^{2}}+\sum_{c=2}^{p-1}\dfrac{1}{(p-1)^{2}p^{2}}+\dfrac{1}{(p-1)p^{2}}\right)\\
	&=(p^{3}-p^{2}-1)(p+1),
	\end{align*}
	and similarly 
	\begin{align*}
	&e(C_{p}^{3},M_{1},p)= \sum_{(C_{p}^{3})_{M_{1}}(p)}\dfrac{\left\lvert\mathrm{Aut}(C_{p}^{3})\right\rvert}{\left\lvert\mathrm{Aut}_{\mathcal{B}r}((C_{p}^{3})_{M_{1}})\right\rvert}=\\
	&\dfrac{\left\lvert\mathrm{Aut}(C_{p}^{3})\right\rvert}{\left\lvert\mathrm{Aut}_{\mathcal{B}r}(\left\langle \rho,\tau, \sigma\alpha_{3} \right\rangle)\right\rvert}+\dfrac{\left\lvert\mathrm{Aut}(C_{p}^{3})\right\rvert}{ \left\lvert\mathrm{Aut}_{\mathcal{B}r}(\left\langle \rho,\tau, \sigma\alpha_{2}\alpha_{3} \right\rangle)\right\rvert}\\
	&=(p^{3}-1)(p^{3}-p)(p^{3}-p^{2})\left(\dfrac{1}{(p-1)^{2}p^{2}}+\dfrac{1}{(p-1)p^{2}}\right)=(p^{3}-1)(p+1)p^{2}.
	\end{align*}
\end{proof}
\begin{lemma}\label{L17}
	For $ \left\lvert\varTheta(G) \right\rvert=p^{2} $ there are exactly 
	$ (2p-3)p $ $ M_{1} $-skew braces of $ M_{1} $ type and $ 2p-1 $ $ C_{p}^{3} $-skew braces of $ M_{1} $ type.
\end{lemma}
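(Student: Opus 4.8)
The plan is to carry over the strategy used in the proof of Lemma \ref{L16}, now in the regime $\lvert\varTheta(G)\rvert=p^{2}$, so that $G\cap M_{1}$ has order $p$. Since $\varTheta(G)$ must be a maximal subgroup of the chosen Sylow $p$-subgroup $S\stackrel{\mathrm{def}}{=}\left\langle\alpha_{1},\alpha_{2},\alpha_{3}\right\rangle\cong M_{1}$, and every maximal subgroup of $S$ contains the Frattini subgroup $\left\langle\alpha_{1}\right\rangle$, the candidates for $\varTheta(G)$ are $\left\langle\alpha_{1},\alpha_{3}\right\rangle$ and $\left\langle\alpha_{1},\alpha_{2}\alpha_{3}^{d}\right\rangle$ for $d=0,\dots,p-1$. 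Here $\left\langle\alpha_{1},\alpha_{3}\right\rangle=\mathrm{Inn}(M_{1})$ is the characteristic $C_{p}^{2}$-factor of $\mathrm{Aut}(M_{1})$ from Proposition \ref{P3}, it is normal in $\mathrm{Aut}(M_{1})$ and lies in every Sylow $p$-subgroup; the remaining maximal subgroups lie in a unique Sylow and are permuted among themselves only by the normaliser of $S$. So, up to $\mathrm{Aut}(M_{1})$-conjugacy, the analysis splits into essentially two cases for $\varTheta(G)$ --- namely $\varTheta(G)=\mathrm{Inn}(M_{1})$ and $\varTheta(G)$ a non-inner maximal subgroup of $S$ (with a possible further subdivision of the latter) --- and within each we also normalise $G\cap M_{1}$: since $\mathrm{Aut}(M_{1})$ has exactly two orbits on the order-$p$ subgroups of $M_{1}$ (the centre $\left\langle\rho\right\rangle$ and everything else), this gives a \emph{central} and a \emph{non-central} subcase.

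In each case I would write $G=\left\langle\eta,\,w\alpha_{1},\,v\alpha_{j}\right\rangle$, where $\left\langle\eta\right\rangle=G\cap M_{1}$ has order $p$, $\alpha_{j}$ is the second generator of $\varTheta(G)$ (so $\alpha_{j}=\alpha_{3}$ in the inner case and $\alpha_{j}=\alpha_{2}\alpha_{3}^{d}$ otherwise), and $w,v\in M_{1}$ are coset representatives modulo $\left\langle\eta\right\rangle$. First comes the pairing analysis: using (\ref{E66}) one computes the conjugates of $\eta$ by $w\alpha_{1}$ and $v\alpha_{j}$ and imposes that they remain in $\left\langle\eta\right\rangle$; this is what forces, in the non-inner case, that $\eta$ have trivial $\sigma$-component (so $\left\langle\eta\right\rangle\subseteq\left\langle\rho,\tau\right\rangle$) and it ties $w$ and $v$ to $\eta$ via explicit linear relations on their coordinates. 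The power conditions $(w\alpha_{1})^{p},(v\alpha_{j})^{p}\in\left\langle\eta\right\rangle$ hold automatically by (\ref{E110}); the remaining requirement is regularity, i.e. $\left\langle\eta,w,v\right\rangle=M_{1}$ as in Section \ref{SB2}. Because every element of $G$ has the form $w'\alpha_{1}^{c_{1}}\alpha_{2}^{c_{2}}\alpha_{3}^{c_{3}}$ with $w'\in M_{1}$, equation (\ref{E110}) shows $G$ has exponent $p$, so $G\cong C_{p}^{3}$ or $G\cong M_{1}$ (the extraspecial group $M_{2}$ is excluded, having elements of order $p^{2}$); which one occurs is read off from the commutator $[\,w\alpha_{1},\,v\alpha_{j}\,]\in M_{1}$, trivial giving $C_{p}^{3}$ and a nontrivial power of $\eta$ giving $M_{1}$. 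The output of this step is, in each case, an explicit parametrised family of regular subgroups.

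The final step is to pass from regular subgroups to $\mathrm{Aut}(M_{1})$-conjugacy classes, which by Propositions \ref{P1} and \ref{P2} are exactly the isomorphism classes of skew braces. For this I would use the conjugation formulas (\ref{E15}), (\ref{E16}) and (\ref{E112}): conjugating a candidate $G$ by $\alpha=\alpha_{1}^{r_{1}}\alpha_{3}^{r_{3}}\begin{psmallmatrix}b_{1}&b_{2}\\b_{3}&b_{4}\end{psmallmatrix}$ (with $b_{2}=0$ whenever we must preserve a fixed maximal subgroup, and the rest of $\beta$ restricted to the stabiliser of the normalisations already made) moves the generators $w\alpha_{1}$, $v\alpha_{j}$ in a controlled way, which lets one normalise $w$, $v$ (and the parameter $d$) to a short list of representatives and then check that distinct representatives give non-conjugate subgroups. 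Summing the contributions of the cases and sorting by isomorphism type should give exactly $(2p-3)p$ skew braces of $M_{1}$ type and $2p-1$ of $C_{p}^{3}$ type.

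The main obstacle is bookkeeping rather than any single conceptual point: there are several interacting cases (inner versus non-inner $\varTheta(G)$; central versus non-central $G\cap M_{1}$), each constrained by pairing relations tying the coordinates of $w$ and $v$ to $\eta$, and --- most delicately --- in the orbit count one must track precisely which automorphisms remain available after fixing $\varTheta(G)$ and $G\cap M_{1}$. In particular, conjugating $\varTheta(G)$ by an automorphism outside $N_{\mathrm{Aut}(M_{1})}(S)$ pushes it out of $S$, so in the non-inner case only $N_{\mathrm{Aut}(M_{1})}(S)$-conjugacy (which merely rescales the parameter $d$) is at one's disposal, and one must make sure this is reflected correctly in the count. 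A secondary point requiring care is verifying that the initial case list is exhaustive, i.e. that every regular $G$ with $\lvert\varTheta(G)\rvert=p^{2}$ is accounted for.
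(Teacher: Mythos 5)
Your proposal follows essentially the same route as the paper: reduce $\varTheta(G)$ to the maximal subgroups $\left\langle\alpha_{1},\alpha_{3}\right\rangle$ and $\left\langle\alpha_{1},\alpha_{2}\alpha_{3}^{a}\right\rangle$ of the fixed Sylow $p$-subgroup, use the conjugation/pairing conditions from (\ref{E66}) to pin down $G\cap M_{1}$ (the paper shows it is forced to be the centre $\left\langle\rho\right\rangle$ in all cases, which your central/non-central subdivision would discover), read off the isomorphism type from the commutator of the two lifted generators using exponent $p$ via (\ref{E110}), and then sort the resulting families into $\mathrm{Aut}(M_{1})$-conjugacy classes with (\ref{E112})--(\ref{E16}), restricting to lower-triangular $\beta$ in the non-inner case exactly as you indicate. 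The plan is sound and matches the paper's Case I/Case II structure; what remains is the explicit bookkeeping the paper carries out.
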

\begin{proof}
	If $ G \subseteq \mathrm{Hol}(M_{1}) $ with $ \left\lvert\varTheta(G) \right\rvert=p^{2} $ is a regular subgroup, then we can assume, without loss of generality, that we have $ \varTheta(G) \subseteq \left\langle\alpha_{1},\alpha_{2},\alpha_{3}\right\rangle $ a subgroup of order $ p^{2} $. We also have $ G\cap M_{1}  $ a subgroup of order $ p $. Therefore, $ \varTheta(G) $ is one of \[\left\langle\alpha_{1},\alpha_{3}\right\rangle,\left\langle\alpha_{1},\alpha_{2}\alpha_{3}^{a}\right\rangle \ \text{for} \  a=0,...,p-1, \]
	and $ G\cap M_{1} $ is of the form \[  \left\langle\rho^{b}\sigma^{c}\tau^{d}\right\rangle \ \text{for} \ b,c,d=0,...,p-1 \ \text{with} \ \left(b,c,d\right)\neq\left(0,0,0\right), \]
	each occurring $ p-1 $ times. We shall consider all subgroups of order $ p $ in $ M_{1} $ and all ways of pairing them with a subgroup of order $ p^{2} $ of $ \left\langle\alpha_{1},\alpha_{2},\alpha_{3}\right\rangle $.
	
	Let us consider a subgroup of the form 
	\[G=\left\langle u ,v\alpha_{1}, w\alpha_{2}^{a_{2}}\alpha_{3}^{a_{3}}\right\rangle  \ \text{for} \ \left(a_{2},a_{3}\right)\neq \left(0,0\right), \ u,v,w\neq 1. \]
	Suppose $ u=\rho^{u_{1}}\sigma^{u_{2}}\tau^{u_{3}} $, $ v=\rho^{v_{1}}\sigma^{v_{2}}\tau^{v_{3}} $, and $ w=\rho^{w_{1}}\sigma^{w_{2}}\tau^{w_{3}} $. Then, we need the following.
	\begin{align}\label{E17}
	\left(v\alpha_{1}\right)u\left(v\alpha_{1}\right)^{-1}=v\left(\alpha_{1}\cdot u \right)v^{-1}u^{-1}=\rho^{u_{2}+u_{2}v_{3}-u_{3}v_{2}} \in \left\langle u\right\rangle,
	\end{align} 
	\begin{align}\label{E18}
	\left(w\alpha_{2}^{a_{2}}\alpha_{3}^{a_{3}}\right)u\left(w\alpha_{2}^{a_{2}}\alpha_{3}^{a_{3}}\right)^{-1}&=w\left(\alpha_{2}^{a_{2}}\alpha_{3}^{a_{3}}\cdot u \right)w^{-1}u^{-1}= \nonumber \\
	&\rho^{\frac{1}{2}a_{2}u_{2}\left(u_{2}-1\right)+a_{3}u_{3}+u_{2}w_{3}-u_{3}w_{2}-a_{2}u_{2}w_{2}-a_{2}u_{2}^{2}}\tau^{a_{2}u_{2}} \in \left\langle u\right\rangle,
	\end{align}
	\begin{align}\label{E19}
	&\left(v\alpha_{1}\right)\left(w \alpha_{2}^{a_{2}}\alpha_{3}^{a_{3}}\right)\left(\left(w \alpha_{2}^{a_{2}}\alpha_{3}^{a_{3}}\right)\left(v\alpha_{1}\right)\right)^{-1}=\nonumber\\
	&\left(\rho^{w_{2}}vw\alpha_{1}\alpha_{2}^{a_{2}}\alpha_{3}^{a_{3}}\right)\left(\rho^{\frac{1}{2}a_{2}v_{2}\left(v_{2}-1\right)+a_{3}v_{1}-a_{2}v_{2}^{2}+v_{2}w_{1}-v_{1}w_{2}}\tau^{a_{2}v_{2}}vw\alpha_{1}\alpha_{2}^{a_{2}}\alpha_{3}^{a_{3}}\right)^{-1}\nonumber\\
	&=\rho^{w_{2}-\frac{1}{2}a_{2}v_{2}\left(v_{2}-1\right)-a_{3}v_{1}+a_{2}v_{2}^{2}-v_{2}w_{1}+v_{1}w_{2}}\tau^{-a_{2}v_{2}}\in \left\langle u\right\rangle.
	\end{align}
	Now assume $ u_{3}=1 $. Then, multiplying $ v\alpha_{1} $ and $ w \alpha_{2}^{a_{2}}\alpha_{3}^{a_{3}} $ by suitable powers of $ u $ if necessary, we can further assume $ v_{3}=w_{3}=0 $. Now (\ref{E17}) implies that $ u_{2}=v_{2} $ and (\ref{E18}) implies that we need \[\rho^{\frac{1}{2}a_{2}u_{2}\left(u_{2}-1\right)+a_{3}-w_{2}-a_{2}u_{2}w_{2}-a_{2}u_{2}^{2}}\tau^{a_{2}u_{2}} \in \left\langle \rho^{u_{1}}\sigma^{u_{2}}\tau\right\rangle,\]
	so $ u_{2}=v_{2}=0 $ and $ a_{3}=w_{2} $. In such case (\ref{E19}) implies that we need
	\[\rho^{w_{2}}\in \left\langle \rho^{u_{1}}\sigma^{u_{2}}\tau\right\rangle,\]
	so $ w_{2}=0 $, which implies that $ G $ cannot be regular. Thus, we cannot have any pairing with subgroups of the form $ \left\langle\rho^{b}\sigma^{c}\tau\right\rangle $. Similarly, if $ u_{2}=1 $, then we can assume $ v_{2}=w_{2}=0 $. Now (\ref{E17}) gives $ v_{3}=-1 $, also (\ref{E18}) gives $ a_{2}=0 $, and (\ref{E19}) gives $ a_{3}=0 $ which is not possible. Thus, the only possibility for $ u $ is $ u=\rho $ and then (\ref{E19}) implies that we also need $ a_{2}v_{2}=0 $.
	
	Therefore, we may only consider subgroups of the form 
	\[G=\left\langle \rho ,v\alpha_{1}, w \alpha_{2}^{a_{2}}\alpha_{3}^{a_{3}}\right\rangle \ \text{with} \ a_{2}v_{2}=v_{1}=w_{1}=0. \]
	There are two main cases to consider. 
	
	\textbf{Case I:} Let us consider 
	\[G=\left\langle\rho,u\alpha_{1},v\alpha_{3} \right\rangle.\]
	Then $ \left(u\alpha_{1}\right)\rho=\rho\left(u\alpha_{1}\right) $ and $ \left(v\alpha_{3}\right)\rho=\rho\left(v\alpha_{3}\right) $, also we have  
	\begin{align}\label{E58}
	\left(u\alpha_{1}\right)\left(v\alpha_{3}\right)&=\rho^{v_{2}}uv\alpha_{1}\alpha_{3} \ \text{and} \nonumber \\
	\left(v\alpha_{3}\right)\left(u\alpha_{1}\right)&=\rho^{u_{3}}vu\alpha_{1}\alpha_{3}=\rho^{u_{3}+u_{2}v_{3}-u_{3}v_{2}}uv\alpha_{1}\alpha_{3},
	\end{align}
	so $ G $ has order $ p^{3} $ and is abelian if and only if $ v_{2}\equiv u_{3}+u_{2}v_{3}-u_{3}v_{2} \ \mathrm{mod}\ p $; furthermore, for $ G $ to be regular we need $ u_{2}v_{3}-u_{3}v_{2}\not\equiv 0 \ \mathrm{mod}\ p $. 
	
	Therefore, for $ u_{2}v_{3}-u_{3}v_{2}\not\equiv 0 \ \mathrm{mod}\ p  $ we have regular subgroups isomorphic to $ C_{p}^{3} $ of the form 
	\begin{align}\label{E55} 
	&\left\langle \rho, u\alpha_{1}, v\alpha_{3} \right\rangle \cong C_{p}^{3} \\ 
	&\text{for} \ A=\begin{pmatrix} u_{2} & v_{2} \\ u_{3} & v_{3}  \end{pmatrix}\in \mathrm{GL}_{2}(\mathbb{F}_{p}) \ \text{with} \ v_{2}=u_{3}+\mathrm{det}(A).\nonumber
	\end{align}
	For $ v_{2}- u_{3}-u_{2}v_{3}+u_{3}v_{2}\not\equiv0 \ \mathrm{mod}\ p $, we find regular subgroups isomorphic to $ M_{1} $ of the form
	\begin{align}\label{E56}
	&\left\langle \rho, u\alpha_{1}, v\alpha_{3} \right\rangle \cong M_{1}  \\ 
	&\text{for} \ A=\begin{pmatrix} u_{2} & v_{2} \\ u_{3} & v_{3}  \end{pmatrix}\in \mathrm{GL}_{2}(\mathbb{F}_{p}) \ \text{with} \ v_{2}-u_{3}-\mathrm{det}(A)\not\equiv 0 \ \mathrm{mod}\ p\nonumber. 
	\end{align}
	
	To find the non-isomorphic skew braces corresponding to the above regular subgroups, we let $ \beta_{0}\stackrel{\mathrm{def}}{=}\begin{psmallmatrix} u_{2} & v_{2} \\ u_{3} & v_{3}  \end{psmallmatrix} $ and note that considering (\ref{E112}) and (\ref{E16}), it suffices to work with an automorphism corresponding to $ \beta \stackrel{\mathrm{def}}{=} \begin{psmallmatrix} b_{1} & b_{2} \\ b_{3} & b_{4} \end{psmallmatrix}  \in \mathrm{GL}_{2}(\mathbb{F}_{p}) $ with $ b\stackrel{\mathrm{def}}{=} \mathrm{det}(\beta)^{-1} $, and we find
	\begin{align*}
	\beta\left(u\alpha_{1}\right)^{b_{1}b}\left(v\alpha_{3}\right)^{b_{2}b}\beta^{-1}&=\rho^{\kappa_{1}}\left(b\beta\beta_{0}\beta^{T}\right)\cdot \sigma \alpha_{1},\\
	\beta\left(u\alpha_{1}\right)^{b_{3}b}\left(v\alpha_{3}\right)^{b_{4}b}\beta^{-1}&=\rho^{\kappa_{2}}\left(b\beta\beta_{0}\beta^{T}\right)\cdot \tau \alpha_{3} 
	\end{align*}
	for some $ \kappa_{1},\kappa_{2} $, where superscript $ T $ denotes the transpose of a matrix. 
	
	Now if $ u_{2}\neq 0 $, then 
	\[u_{2}^{-1}\begin{pmatrix} 1 & 0 \\  - u_{3} & u_{2} \end{pmatrix}\begin{pmatrix} u_{2} & v_{2} \\  u_{3} & v_{3} \end{pmatrix}\begin{pmatrix} 1 & 0 \\  - u_{3} & u_{2} \end{pmatrix}^{T} =\begin{pmatrix} 1 & v_{2}-u_{3} \\  0 & \mathrm{det}(\beta_{0}) \end{pmatrix}; \]
	if $ v_{3}\neq 0 $, then 
	\[v_{3}^{-1}\begin{pmatrix} 0 & 1 \\  -v_{3} & v_{2} \end{pmatrix} \begin{pmatrix} u_{2} & v_{2} \\  u_{3} & v_{3} \end{pmatrix}\begin{pmatrix} 0 & 1 \\  -v_{3} & v_{2} \end{pmatrix}^{T} =\begin{pmatrix} 1 & v_{2}-u_{3} \\  0 & \mathrm{det}(\beta_{0}) \end{pmatrix}; \]
	if $ u_{2}=v_{3}=0 $ and $ u_{3}\neq -v_{2} $, then 
	\[\left(u_{3}+v_{2}\right)^{-1}\begin{pmatrix} 1 & 1 \\   -u_{3} & v_{2} \end{pmatrix} \begin{pmatrix} 0 & v_{2} \\  u_{3} & 0 \end{pmatrix}\begin{pmatrix} 1 & 1 \\   -u_{3} & v_{2} \end{pmatrix}^{T} =\begin{pmatrix} 1 & v_{2}-u_{3} \\  0 & \mathrm{det}(\beta_{0}) \end{pmatrix}, \]
	and finally if $ u_{2}=v_{3}=0 $ and $ u_{3}= -v_{2} $, then 
	\[bI\beta_{0}I^{T}=\beta_{0}. \]
	Thus every one of our regular subgroups above is conjugate to one of the form 
	\[\left\langle \rho, \sigma\alpha_{1}, \sigma^{t_{2}}\tau^{t_{3}}\alpha_{3} \right\rangle, \left\langle \rho, \tau^{-t_{4}}\alpha_{1}, \sigma^{t_{4}}\alpha_{3} \right\rangle  \ \text{for some} \ t_{2},t_{3},t_{4},   \]
	and these for different values of $ t_{2}, t_{3} $, and $ t_{4} $ are not conjugate to each other. 
	
	Therefore, we find non-isomorphic skew braces
	\begin{align}\label{E57}
	&\left\langle \rho, \sigma\alpha_{1}, \sigma^{u_{2}}\tau^{u_{2}}\alpha_{3} \right\rangle,\left\langle \rho, \tau^{-2}\alpha_{1}, \sigma^{2}\alpha_{3} \right\rangle \cong C_{p}^{3},  \\
	&\left\langle \rho, \sigma\alpha_{1}, \sigma^{u_{3}}\tau^{u_{4}}\alpha_{3} \right\rangle,\left\langle \rho, \tau^{-u_{5}}\alpha_{1}, \sigma^{u_{5}}\alpha_{3} \right\rangle \cong M_{1} \nonumber\\
	&\text{for} \ u_{4}=0,...,p-1,\ u_{2},u_{3},u_{5}=1,...,p-1 \ \text{with} \ u_{5}\neq 2,\ u_{3}-u_{4}\not\equiv 0 \ \mathrm{mod}\ p.\nonumber
	\end{align}
	
	\textbf{Case II:} Next, we consider subgroups of the form \[G=\left\langle\rho,x\alpha_{1},y\alpha_{2}\alpha_{3}^{a} \right\rangle \ \text{with} \ x_{2}=0.\]
	Note, we have 
	\begin{align}\label{E59}
	\left(x\alpha_{1}\right)\left(y\alpha_{2}\alpha_{3}^{a}\right)&=\rho^{y_{2}}xy\alpha_{1}\alpha_{2}\alpha_{3}^{a} \ \text{and} \nonumber \\
	\left(y\alpha_{2}\alpha_{3}^{a}\right)\left(x\alpha_{1}\right)&=\rho^{ax_{3}-x_{3}y_{2}}xy\alpha_{1}\alpha_{2}\alpha_{3}^{a}, 
	\end{align}
	so $ G $ is abelian if and only if $ y_{2}\equiv ax_{3}-x_{3}y_{2}\ \mathrm{mod}\ p $; furthermore, we need $ x_{3},y_{2}\neq 0 $ for $ G $ to be regular. 
	
	Therefore, for $ y_{2}\equiv ax_{3}-x_{3}y_{2}\ \mathrm{mod}\ p $ we find regular subgroups isomorphic to $ C_{p}^{3} $ of the form 
	\begin{align}\label{E60}
	&\left\langle\rho,\tau^{x_{3}}\alpha_{1},\sigma^{y_{2}}\tau^{y_{3}}\alpha_{2}\alpha_{3}^{\left(1+x_{3}\right)y_{2}x_{3}^{-1}} \right\rangle \cong C_{p}^{3}\\
	&\text{for} \ y_{3}=0,...,p-1, \ y_{2},x_{3}=1,...,p-1, \nonumber
	\end{align}
	and for $ ax_{3}\not\equiv y_{2}+x_{3}y_{2} \ \mathrm{mod}\ p $, we find regular subgroups isomorphic to  $ M_{1} $ of the form
	\begin{align}\label{E61}
	&\left\langle\rho,\tau^{x_{3}}\alpha_{1},y\alpha_{2}\alpha_{3}^{a} \right\rangle \cong M_{1} \\
	&\text{for}\ a,y_{3}=0,...,p-1,\ x_{3},y_{2}=1,...,p-1 \ \text{with} \ ax_{3}- y_{2}-x_{3}y_{2}\not\equiv 0 \ \mathrm{mod}\ p.\nonumber
	\end{align}
	
	To find the non-isomorphic skew braces corresponding to the above regular subgroups, it suffices to work with automorphisms corresponding to elements of the form $ \beta \stackrel{\mathrm{def}}{=} \begin{psmallmatrix} b_{1} & 0 \\ b_{3} & b_{4} \end{psmallmatrix}  \in \mathrm{GL}_{2}(\mathbb{F}_{p}) $. Then, using (\ref{E112}) and (\ref{E16}), we have
	\begin{align*}
	&\left(\alpha_{3}^{r_{3}}\beta\right)\left(\tau^{x_{3}}\alpha_{1}\right)^{b_{4}^{-1}}\left(\alpha_{3}^{r_{3}}\beta\right)^{-1}=\rho^{\kappa_{1}}\tau^{x_{3}}\alpha_{1} \ \text{and} \\
	&\left(\alpha_{3}^{r_{3}}\beta\right)\left(\tau^{x_{3}}\alpha_{1}\right)^{ab_{1}b_{3}b_{4}^{-2}-r_{3}b_{4}^{-1}-\frac{1}{2}b_{4}^{-1}\left(1-b_{1}\right)-\frac{1}{2}ab_{1}b_{4}^{-1}\left(b_{1}b_{4}^{-1}-1\right)}\left(y\alpha_{2}\alpha_{3}^{a}\right)^{b_{1}b_{4}^{-1}}\left(\alpha_{3}^{r_{3}}\beta\right)^{-1}\\
	&=\rho^{\kappa_{2}}\sigma^{y_{2}b_{1}^{2}b_{4}^{-1}}\tau^{\left(ab_{1}b_{3}b_{4}^{-2}-r_{3}b_{4}^{-1}-\frac{1}{2}b_{4}^{-1}\left(1-b_{1}\right)-\frac{1}{2}ab_{1}b_{4}^{-1}\left(b_{1}b_{4}^{-1}-1\right)\right)x_{3}+b_{1}y_{3}+\frac{1}{2}b_{1}\left(b_{1}b_{4}^{-1}-1\right)y_{2}}\alpha_{2}\alpha_{3}^{ab_{1}^{2}b_{4}^{-1}},
	\end{align*}
	for some $ \kappa_{1},\kappa_{2} $, and $ r_{3} $. Now conjugating the subgroup $  \left\langle\rho,\tau^{x_{3}}\alpha_{1},y\alpha_{2}\alpha_{3}^{a} \right\rangle $ with the automorphism corresponding to $ \alpha_{3}^{\frac{1}{2}(y_{2}^{-1}-1)-y_{2}x_{3}^{-1}}\begin{psmallmatrix} y_{2}^{-1} & 0 \\ 0 & y_{2}^{-1} \end{psmallmatrix} $ we get $  \left\langle\rho,\tau^{x_{3}}\alpha_{1},\sigma\alpha_{2}\alpha_{3}^{ay_{2}^{-1}} \right\rangle  $, and these subgroups for different values of $ a $ and $ x_{3} $ and $ y_{2} $ are not conjugate to each other. 
	
	Therefore, we find non-isomorphic skew braces
	\begin{align}\label{E62}
	&\left\langle\rho,\tau^{x_{3}}\alpha_{1},\sigma\alpha_{2}\alpha_{3}^{(1+x_{3})x_{3}^{-1}} \right\rangle\cong C_{p}^{3},\ \left\langle\rho,\tau^{x_{3}}\alpha_{1},\sigma\alpha_{2}\alpha_{3}^{a} \right\rangle\cong M_{1}  \\
	& \text{for} \ a=0,...,p-1, \ x_{3}=1,...,p-1 \ \text{with} \ a- (1+x_{3})x_{3}^{-1}\not\equiv 0 \ \mathrm{mod}\ p. \nonumber
	\end{align}
	Thus, the corresponding non-isomorphic skew braces, combining (\ref{E57}) and (\ref{E62}), are \[\left\langle \rho, \sigma\alpha_{1}, \sigma^{u_{3}}\tau^{u_{4}}\alpha_{3} \right\rangle,\left\langle \rho, \tau^{-u_{5}}\alpha_{1}, \sigma^{u_{5}}\alpha_{3} \right\rangle,\left\langle \rho, \tau^{x_{3}}\alpha_{1}, \sigma\alpha_{2}\alpha_{3}^{a} \right\rangle \cong M_{1}, \]
	\[\left\langle \rho, \sigma\alpha_{1}, \sigma^{u_{2}}\tau^{u_{2}}\alpha_{3} \right\rangle,\left\langle \rho, \tau^{-2}\alpha_{1}, \sigma^{2}\alpha_{3} \right\rangle,\left\langle \rho, \tau^{x_{3}}\alpha_{1}, \sigma\alpha_{2}\alpha_{3}^{\left(1+x_{3}\right)x_{3}^{-1}} \right\rangle \cong C_{p}^{3} \ \text{for} \]
	\[a,u_{3}=0,...,p-1, \  u_{2},u_{4},u_{5},x_{3},=1,...,p-1 \]
	\[\text{with}  \ u_{5}\neq2, \ u_{3}-u_{4},\ ax_{3}-\left(1+x_{3}\right)\not\equiv 0 \ \mathrm{mod}\ p.  \]
	Therefore, there are
	\[(p-1)p-(p-1)+(p-2)+(p-1)p-(p-1)=(2p-3)p\]
	$ M_{1} $-skew braces of $ M_{1} $ type and \[(p-1)+1+(p-1)=2p-1\] $ C_{p}^{3} $-skew braces of $ M_{1} $ type.
\end{proof}
\begin{lemma}\label{L17H}
	There are \[ (p^{4}-p^{3}-2p^{2}+2p+1)p \] Hopf-Galois structures of $ M_{1} $ type on Galois extensions of fields with Galois group $ G\cong M_{1} $ and $ \left\lvert\varTheta(G) \right\rvert=p^{2} $, and exactly \[(p^{3}-1)(p^{2}-2)p^{2}\] Hopf-Galois structures of $ M_{1} $ type on Galois extensions of fields with Galois group $ G\cong C_{p}^{3} $ and $ \left\lvert\varTheta(G) \right\rvert=p^{2} $.
\end{lemma}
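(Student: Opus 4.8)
The plan is to evaluate formula (\ref{E1}) on the stratum $\lvert\varTheta(G)\rvert=p^{2}$, i.e. to compute
\[
e(G,M_{1},p^{2})=\sum_{G_{M_{1}}(p^{2})}\frac{\lvert\mathrm{Aut}(G)\rvert}{\lvert\mathrm{Aut}_{\mathcal{B}r}(G_{M_{1}})\rvert},
\]
where the sum ranges over the isomorphism classes of skew braces produced in Lemma \ref{L17}: for $G\cong M_{1}$ the three families
\[
\langle\rho,\sigma\alpha_{1},\sigma^{u_{3}}\tau^{u_{4}}\alpha_{3}\rangle,\qquad\langle\rho,\tau^{-u_{5}}\alpha_{1},\sigma^{u_{5}}\alpha_{3}\rangle,\qquad\langle\rho,\tau^{x_{3}}\alpha_{1},\sigma\alpha_{2}\alpha_{3}^{a}\rangle,
\]
and for $G\cong C_{p}^{3}$ the three corresponding families. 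By (\ref{E2}), $\mathrm{Aut}_{\mathcal{B}r}$ of such a brace is the stabiliser of the associated regular subgroup of $\mathrm{Hol}(M_{1})$ under conjugation by $\mathrm{Aut}(M_{1})$, so everything reduces to determining these stabilisers and then doing the arithmetic, using $\lvert\mathrm{Aut}(M_{1})\rvert=(p^{2}-1)(p-1)p^{3}$ from Proposition \ref{P3} and $\lvert\mathrm{Aut}(C_{p}^{3})\rvert=(p^{3}-1)(p^{3}-p)(p^{3}-p^{2})$.

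To compute a stabiliser I would, exactly as in the model case of Lemma \ref{L16H}, write a general automorphism as $\alpha=\alpha_{1}^{r_{1}}\alpha_{3}^{r_{3}}\begin{psmallmatrix}b_{1}&b_{2}\\b_{3}&b_{4}\end{psmallmatrix}$ and conjugate the three generators of the regular subgroup using the formulas (\ref{E15}), (\ref{E16}), (\ref{E112}). Since $\langle\rho\rangle$ is characteristic in $M_{1}$ it is automatically preserved, so "$\alpha$ stabilises the subgroup" reduces to two conditions: that the $\mathrm{GL}_{2}(\mathbb{F}_{p})$-datum $\beta_{0}=\begin{psmallmatrix}u_{2}&v_{2}\\u_{3}&v_{3}\end{psmallmatrix}$ (or the pair $(x_{3},a)$ in Case II) be fixed under the twisted congruence action $\beta_{0}\mapsto\det(\beta)^{-1}\beta\beta_{0}\beta^{T}$ recorded in the proof of Lemma \ref{L17}; and that the residual $\alpha_{1},\alpha_{3}$-exponents then vanish. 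Each of these is a short system of linear or quadratic equations over $\mathbb{F}_{p}$ in $b_{1},\dots,b_{4},r_{1},r_{3}$, whose solution set has order a product of powers of $p$ and factors $(p-1)$; carrying this out family by family gives $\lvert\mathrm{Aut}_{\mathcal{B}r}\rvert$ for every brace in the list.

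Finally I would substitute these orders into (\ref{E1}), sum over the parameter ranges of Lemma \ref{L17} (respecting the exclusions $u_{5}\neq 2$, $u_{3}\not\equiv u_{4}$, $ax_{3}\not\equiv 1+x_{3}$, and isolating those special members of a family whose stabiliser is strictly larger than the generic one), and check that the totals collapse to $(p^{4}-p^{3}-2p^{2}+2p+1)p$ for $G\cong M_{1}$ and to $(p^{3}-1)(p^{2}-2)p^{2}$ for $G\cong C_{p}^{3}$; this is consistent with $\widetilde e(G,M_{1},p^{2})$ from Lemma \ref{L17} via $e(G,N)=\tfrac{\lvert\mathrm{Aut}(G)\rvert}{\lvert\mathrm{Aut}(N)\rvert}e'(G,N)$. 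The main obstacle is the middle step: the twisted congruence action on $2\times 2$ matrices has several orbit types, governed by the vanishing of $u_{2}$, $v_{3}$, or $u_{3}+v_{2}$ as in the normal-form analysis of Lemma \ref{L17}, and the stabiliser jumps on the degenerate strata, so those members must be treated separately; moreover in the $\alpha_{2}$-family the identity $\beta\alpha_{2}^{a_{2}}\beta^{-1}=\alpha_{1}^{\frac{1}{2}a_{2}b_{4}(b_{1}^{-1}-1)}\alpha_{2}^{a_{2}b_{1}^{-1}b_{4}}$ couples the inner $C_{p}^{2}$-exponents to $\beta$, and this has to be unwound carefully to extract the constraints $b_{4}=b_{1}^{2}$ and the forced value of $r_{3}$.
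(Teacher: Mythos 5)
Your overall strategy is exactly the paper's: take the orbit representatives from Lemma \ref{L17}, compute each stabiliser $\mathrm{Aut}_{\mathcal{B}r}$ inside $\mathrm{Aut}(M_{1})$ via the conjugation formulas (\ref{E112}), (\ref{E15}), (\ref{E16}), and sum $\lvert\mathrm{Aut}(G)\rvert/\lvert\mathrm{Aut}_{\mathcal{B}r}\rvert$ as in (\ref{E1}); your treatment of the Case II family (the constraint $b_{4}=b_{1}^{2}$ and the forced value of $r_{3}$ coming from $\beta\alpha_{2}\beta^{-1}$) matches the paper's. The one place your outline goes astray is Case I: the invariant governing the stabiliser of $\left\langle\rho,\sigma\alpha_{1},\sigma^{u_{2}}\tau^{u_{3}}\alpha_{3}\right\rangle$ is not the vanishing of $u_{2}$, $v_{3}$ or $u_{3}+v_{2}$ (those only entered the normal-form reduction in Lemma \ref{L17}), but the discriminant $u_{2}^{2}-4u_{3}$ of the fixed-point equation $b\,\beta\begin{psmallmatrix}1&u_{2}\\0&u_{3}\end{psmallmatrix}\beta^{T}=\begin{psmallmatrix}1&u_{2}\\0&u_{3}\end{psmallmatrix}$. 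Solving that system forces $b_{3}=-b_{2}u_{3}$ and $b_{4}=b_{1}+b_{2}u_{2}$ subject to $b_{1}^{2}+b_{1}b_{2}u_{2}+b_{2}^{2}u_{3}\neq0$, and the number of admissible pairs $(b_{1},b_{2})$ is $(p-1)p$, $(p-1)^{2}$, or $p^{2}-1$ according as $u_{2}^{2}-4u_{3}$ is zero, a nonzero square, or a nonsquare modulo $p$. In particular your expectation that every stabiliser order is a product of powers of $p$ and factors $(p-1)$ fails in the nonsquare case, where a factor $(p+1)$ appears; those factors are essential to making the sums collapse to $(p^{4}-p^{3}-2p^{2}+2p+1)p$ and $(p^{3}-1)(p^{2}-2)p^{2}$, and the summation over the family must accordingly be split over the roughly $\tfrac{p-1}{2}$ parameters in each residue class of the discriminant (plus the exceptional representative $\left\langle\rho,\tau^{-v_{2}}\alpha_{1},\sigma^{v_{2}}\alpha_{3}\right\rangle$, whose stabiliser is all of $\mathrm{Aut}(M_{1})$). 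With that correction the computation closes as you describe.
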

\begin{proof}
	To find the number of  Hopf-Galois structures corresponding to the skew braces of Lemma \ref{L17}, we need to find the automorphism groups of the skew braces \[\left\langle \rho, \sigma\alpha_{1}, \sigma^{u_{3}}\tau^{u_{4}}\alpha_{3} \right\rangle,\left\langle \rho, \tau^{-u_{5}}\alpha_{1}, \sigma^{u_{5}}\alpha_{3} \right\rangle,\left\langle \rho, \tau^{x_{3}}\alpha_{1}, \sigma\alpha_{2}\alpha_{3}^{a} \right\rangle \cong M_{1}, \]
	\[\left\langle \rho, \sigma\alpha_{1}, \sigma^{u_{2}}\tau^{u_{2}}\alpha_{3} \right\rangle,\left\langle \rho, \tau^{-2}\alpha_{1}, \sigma^{2}\alpha_{3} \right\rangle,\left\langle \rho, \tau^{x_{3}}\alpha_{1}, \sigma\alpha_{2}\alpha_{3}^{\left(1+x_{3}\right)x_{3}^{-1}} \right\rangle \cong C_{p}^{3} \ \text{for} \]
	\[a,u_{3}=0,...,p-1, \  u_{2},u_{4},u_{5},x_{3},=1,...,p-1 \]
	\[\text{with}  \ u_{5}\neq2, \ u_{3}-u_{4},\ ax_{3}-\left(1+x_{3}\right)\not\equiv 0 \ \mathrm{mod}\ p.  \] We let \[ \alpha=\gamma\beta \in \mathrm{Aut}(M_{1})\ \text{where}\  \gamma\stackrel{\mathrm{def}}{=} \alpha_{1}^{r_{1}}\alpha_{3}^{r_{3}}, \ \beta \stackrel{\mathrm{def}}{=} \begin{pmatrix} b_{1} & b_{2} \\ b_{3} & b_{4} \end{pmatrix},\]
	and set $ b \stackrel{\mathrm{def}}{=} \mathrm{det}(\beta)^{-1} $. 
	
	\textbf{For skew braces of Case I of Lemma \ref{L17}}: If $ \alpha \in \mathrm{Aut}_{\mathcal{B}r}(\left\langle \rho, \sigma\alpha_{1}, \sigma^{u_{2}}\tau^{u_{3}}\alpha_{3} \right\rangle) $, since we have
	\begin{align*}
	\alpha\left(\sigma\alpha_{1}\right)^{b_{1}b}\left(\sigma^{u_{2}}\tau^{u_{3}}\alpha_{3}\right)^{b_{2}b}\alpha^{-1}&=\rho^{\kappa_{1}}\left(b\beta\begin{psmallmatrix} 1 & u_{2} \\ 0 & u_{2} \end{psmallmatrix}\beta^{T}\right)\cdot \sigma \alpha_{1},\\
	\alpha\left(\sigma\alpha_{1}\right)^{b_{3}b}\left(\sigma^{u_{2}}\tau^{u_{3}}\alpha_{3}\right)^{b_{4}b}\alpha^{-1}&=\rho^{\kappa_{2}}\left(b\beta\begin{psmallmatrix} 1 & u_{2} \\ 0 & u_{2} \end{psmallmatrix}\beta^{T}\right)\cdot \tau \alpha_{3}, 
	\end{align*}
	we must have \[ b\beta\begin{pmatrix} 1 & u_{2} \\ 0 & u_{3} \end{pmatrix}\beta^{T}= b\begin{pmatrix} b_{1}^{2}+b_{2}(b_{1}u_{2}+b_{2}u_{3}) & b_{1}(b_{3}+b_{4}u_{2})+b_{2}b_{4}u_{3} \\ b_{1}b_{3}+b_{2}(b_{3}u_{2}+b_{4}u_{3}) & b_{3}^{2}+b_{4}(b_{3}u_{2}+b_{4}u_{3}) \end{pmatrix} =\begin{pmatrix} 1 & u_{2} \\ 0 & u_{3} \end{pmatrix} .\]
	Thus we need   
	\begin{align*}
	b_{1}^{2}+b_{2}(b_{1}u_{2}+b_{2}u_{3})&=b_{1}b_{4}-b_{2}b_{3}\\
	b_{1}b_{3}+b_{2}(b_{3}u_{2}+b_{4}u_{3})&=0\\
	b_{3}^{2}+b_{4}(b_{3}u_{2}+b_{4}u_{3})&=(b_{1}b_{4}-b_{2}b_{3})u_{3}.
	\end{align*}
	The second and third equations give
	\begin{align*}
	b_{1}b_{3}b_{4}+b_{2}b_{4}(b_{3}u_{2}+b_{4}u_{3})&=0\\
	b_{2}b_{3}^{2}+b_{2}b_{4}(b_{3}u_{2}+b_{4}u_{3})&=b_{2}(b_{1}b_{4}-b_{2}b_{3})u_{3},
	\end{align*}
	so we must have \[-b_{1}b_{3}b_{4}+b_{2}b_{3}^{2}=b_{2}(b_{1}b_{4}-b_{2}b_{3})u_{3}, \]
	which implies that we must set $ b_{3}=-b_{2}u_{3} $ and $ b_{4}=b_{1}+b_{2}u_{2} $ which satisfies all three equations. Thus we must have 
	\[\mathrm{Aut}_{\mathcal{B}r}(\left\langle \rho, \sigma\alpha_{1}, \sigma^{u_{2}}\tau^{u_{3}}\alpha_{3} \right\rangle)=\left\{ \alpha \in \mathrm{Aut}(M_{1}) \mid \alpha= \alpha_{1}^{r_{1}}\alpha_{3}^{r_{3}} \begin{psmallmatrix} b_{1} & b_{2} \\ -b_{2}u_{3} & b_{1}+b_{2}u_{2} \end{psmallmatrix} \right\},\]
	where we need $ b_{1}^{2}+b_{1}b_{2}u_{2}+b_{2}^{2}u_{3}\neq 0 $, i.e., \[ \left(b_{1}u_{2}+2b_{2}u_{3}\right)^{2}\neq b_{1}^{2}\left(u_{2}^{2}-4u_{3}\right). \]
	We now need to consider three cases for $ u_{2}^{2}-4u_{3}=0 $ and when $ u_{2}^{2}-4u_{3} $ is a square modulo $ p $ or not. We find 
	\begin{align*}
	\left\lvert\mathrm{Aut}_{\mathcal{B}r}\left(\left\langle \rho, \sigma\alpha_{1}, \sigma^{u_{2}}\tau^{u_{2}^{2}/4}\alpha_{3} \right\rangle\right)\right\rvert&=(p-1)p^{3} \ \text{for} \ u_{2}\neq 0,\\
	\left\lvert\mathrm{Aut}_{\mathcal{B}r}\left(\left\langle \rho, \sigma\alpha_{1}, \sigma^{u_{2}}\tau^{u_{3}}\alpha_{3} \right\rangle\right)\right\rvert&=(p-1)^{2}p^{2} \ \text{if} \ u_{3}\neq 0\ \text{and} \ u_{2}^{2}-4u_{3}\neq 0 \ \text{is a square}, \\
	\left\lvert\mathrm{Aut}_{\mathcal{B}r}\left(\left\langle \rho, \sigma\alpha_{1}, \sigma^{u_{2}}\tau^{u_{3}}\alpha_{3} \right\rangle\right)\right\rvert&=(p^{2}-1)p^{2} \ \text{if} \ u_{3}\neq 0 \ \text{and}\ u_{2}^{2}-4u_{3}\neq 0 \ \text{is not a square}.
	\end{align*}
	We also have \[\mathrm{Aut}_{\mathcal{B}r}(\left\langle \rho, \tau^{-v_{2}}\alpha_{1}, \sigma^{v_{2}}\alpha_{3} \right\rangle)=\left\{ \alpha \in \mathrm{Aut}(M_{1}) \mid \alpha= \alpha_{1}^{r_{1}}\alpha_{3}^{r_{3}} \begin{psmallmatrix} b_{1} & b_{2} \\ b_{3} & b_{4} \end{psmallmatrix} \right\}.\]
	
	\textbf{For skew braces of Case II of Lemma \ref{L17}}: If $ \alpha \in \mathrm{Aut}_{\mathcal{B}r}(\left\langle\rho,\tau^{x_{3}}\alpha_{1},\sigma\alpha_{2}\alpha_{3}^{a} \right\rangle) $, we need to set $ b_{2}=0 $, now since we have
	\begin{align*}
	&\alpha\left(\tau^{x_{3}}\alpha_{1}\right)^{b_{4}^{-1}}\alpha^{-1}=\rho^{\kappa_{1}}\tau^{x_{3}}\alpha_{1} \ \text{and} \\
	&\alpha\left(\tau^{x_{3}}\alpha_{1}\right)^{ab_{1}b_{3}b_{4}^{-2}-r_{3}b_{4}^{-1}-\frac{1}{2}b_{4}^{-1}\left(1-b_{1}\right)-\frac{1}{2}ab_{1}b_{4}^{-1}\left(b_{1}b_{4}^{-1}-1\right)}\left(y\alpha_{2}\alpha_{3}^{a}\right)^{b_{1}b_{4}^{-1}}\alpha^{-1}\\
	&=\rho^{\kappa_{2}}\sigma^{b_{1}^{2}b_{4}^{-1}}\tau^{\left(ab_{1}b_{3}b_{4}^{-2}-r_{3}b_{4}^{-1}-\frac{1}{2}b_{4}^{-1}\left(1-b_{1}\right)-\frac{1}{2}ab_{1}b_{4}^{-1}\left(b_{1}b_{4}^{-1}-1\right)\right)x_{3}+\frac{1}{2}b_{1}\left(b_{1}b_{4}^{-1}-1\right)}\alpha_{2}\alpha_{3}^{ab_{1}^{2}b_{4}^{-1}},
	\end{align*}
	we must have $ b_{4}=b_{1}^{2} $ and 
	\[r_{3}=ab_{1}^{-1}b_{3}+\frac{1}{2}\left(b_{1}-1\right)\left(1+a\right)+\frac{1}{2}b_{1}^{2}x_{3}^{-1}\left(b_{1}+1\right);\]
	thus we must have
	\[\mathrm{Aut}_{\mathcal{B}r}(\left\langle\rho,\tau^{x_{3}}\alpha_{1},\sigma\alpha_{2}\alpha_{3}^{a} \right\rangle)=\left\{ \alpha \in \mathrm{Aut}(M_{1}) \mid \alpha= \alpha_{1}^{r_{1}}\alpha_{3}^{ab_{1}^{-1}b_{3}+\frac{1}{2}\left(b_{1}-1\right)\left(1+a\right)+\frac{1}{2}b_{1}^{2}x_{3}^{-1}\left(b_{1}+1\right)} \begin{psmallmatrix} b_{1} & 0 \\ b_{3} & b_{1}^{2} \end{psmallmatrix} \right\}.\]
	
	Therefore, we have 
	\begin{align*}
	&e(M_{1},M_{1},p^{2})= \sum_{(M_{1})_{M_{1}}(p^{2})}\dfrac{\left\lvert\mathrm{Aut}(M_{1})\right\rvert}{\left\lvert\mathrm{Aut}_{\mathcal{B}r}((M_{1})_{M_{1}})\right\rvert}=\\
	&\sum_{u_{2}\neq 0, 4}\dfrac{\left\lvert\mathrm{Aut}(M_{1})\right\rvert}{\left\lvert\mathrm{Aut}_{\mathcal{B}r}(\left\langle \rho, \sigma\alpha_{1}, \sigma^{u_{2}}\tau^{\frac{u_{2}^{2}}{4}}\alpha_{3} \right\rangle)\right\rvert}+\sum_{\substack{u_{2}-u_{3}, u_{3}, u_{2}^{2}-4u_{3}\neq 0 \\u_{2}^{2}-4u_{3} \ \text{is a square}} }\dfrac{\left\lvert\mathrm{Aut}(M_{1})\right\rvert}{ \left\lvert\mathrm{Aut}_{\mathcal{B}r}(\left\langle \rho, \sigma\alpha_{1}, \sigma^{u_{2}}\tau^{u_{3}}\alpha_{3} \right\rangle)\right\rvert}+\\
	&\sum_{\substack{u_{2}-u_{3}, u_{3}, u_{2}^{2}-4u_{3}\neq 0 \\u_{2}^{2}-4u_{3} \ \text{is not a square}} }\dfrac{\left\lvert\mathrm{Aut}(M_{1})\right\rvert}{ \left\lvert\mathrm{Aut}_{\mathcal{B}r}(\left\langle \rho, \sigma\alpha_{1}, \sigma^{u_{2}}\tau^{u_{2}}\alpha_{3} \right\rangle)\right\rvert}+\sum_{v_{2}\neq 0, 2}\dfrac{\left\lvert\mathrm{Aut}(M_{1})\right\rvert}{\left\lvert\mathrm{Aut}_{\mathcal{B}r}(\left\langle \rho, \tau^{-v_{2}}\alpha_{1}, \sigma^{v_{2}}\alpha_{3} \right\rangle)\right\rvert}\\
	&+\sum_{\substack{x_{3}\neq 0, \ a\\ (1+x_{3})x_{3}^{-1}\neq a}}\dfrac{\left\lvert\mathrm{Aut}(M_{1})\right\rvert}{ \left\lvert\mathrm{Aut}_{\mathcal{B}r}(\left\langle\rho,\tau^{x_{3}}\alpha_{1},\sigma\alpha_{2}\alpha_{3}^{a} \right\rangle)\right\rvert}\\
	&=(p^{2}-1)(p^{2}-p)p^{2}\times\\
	&\left(\dfrac{p-2}{(p-1)p^{3}}+\dfrac{\frac{p-1}{2}+\left(\frac{p-1}{2}-1\right)(p-2)}{(p-1)^{2}p^{2}}+\dfrac{\frac{p-1}{2}+\left(\frac{p-1}{2}\right)(p-2)}{(p^{2}-1)p^{2}}+ \dfrac{p-2}{(p^{2}-1)(p^{2}-p)p^{2}}+\dfrac{(p-1)^{2}}{(p-1)p^{2}}\right)\\
	&=(p^{4}-p^{3}-2p^{2}+2p+1)p,
	\end{align*}
	and similarly 
	\begin{align*}
	&e(C_{p}^{3},M_{1},p^{2})= \sum_{(C_{p}^{3})_{M_{1}}(p^{2})}\dfrac{\left\lvert\mathrm{Aut}(C_{p}^{3})\right\rvert}{\left\lvert\mathrm{Aut}_{\mathcal{B}r}((C_{p}^{3})_{M_{1}})\right\rvert}=\\
	&\dfrac{\left\lvert\mathrm{Aut}(C_{p}^{3})\right\rvert}{\left\lvert\mathrm{Aut}_{\mathcal{B}r}(\left\langle \rho, \sigma\alpha_{1}, \sigma^{4}\tau^{4}\alpha_{3} \right\rangle)\right\rvert}+\sum_{\substack{ u_{2}^{2}-4u_{2}\neq 0 \\ \text{is a square}} }\dfrac{\left\lvert\mathrm{Aut}(C_{p}^{3})\right\rvert}{ \left\lvert\mathrm{Aut}_{\mathcal{B}r}(\left\langle \rho, \sigma\alpha_{1}, \sigma^{u_{2}}\tau^{u_{2}}\alpha_{3} \right\rangle)\right\rvert}+\\
	&\sum_{\substack{u_{2}^{2}-4u_{2}\neq 0 \\ \text{is not a square}} }\dfrac{\left\lvert\mathrm{Aut}(C_{p}^{3})\right\rvert}{ \left\lvert\mathrm{Aut}_{\mathcal{B}r}(\left\langle \rho, \sigma\alpha_{1}, \sigma^{u_{2}}\tau^{u_{2}}\alpha_{3} \right\rangle)\right\rvert}+\dfrac{\left\lvert\mathrm{Aut}(C_{p}^{3})\right\rvert}{\left\lvert\mathrm{Aut}_{\mathcal{B}r}(\left\langle \rho, \tau^{-2}\alpha_{1}, \sigma^{2}\alpha_{3} \right\rangle\right\rvert}\\
	&+\sum_{x_{3}\neq 0}\dfrac{\left\lvert\mathrm{Aut}(C_{p}^{3})\right\rvert}{ \left\lvert\mathrm{Aut}_{\mathcal{B}r}(\left\langle\rho,\tau^{x_{3}}\alpha_{1},\sigma\alpha_{2}\alpha_{3}^{(1+x_{3})x_{3}^{-1}} \right\rangle)\right\rvert}\end{align*}
	\begin{align*}
	&=(p^{3}-1)(p^{3}-p)(p^{3}-p^{2})\times\\
	&\left(\dfrac{1}{(p-1)p^{3}}+\dfrac{\frac{p-1}{2}-1}{(p-1)^{2}p^{2}}+\dfrac{\frac{p-1}{2}}{(p^{2}-1)p^{2}}+ \dfrac{1}{(p^{2}-1)(p^{2}-p)p^{2}}+\dfrac{p-1}{(p-1)p^{2}}\right)\\
	&=(p^{3}-1)(p^{2}-2)p^{2}.
	\end{align*}
\end{proof}
\begin{lemma}\label{L18}
	For $ \left\lvert\varTheta(G) \right\rvert=p^{3} $ there are exactly four  $ M_{1} $-skew braces of $ M_{1} $ type and no other. Furthermore, there are only \[ (p^{2}-1)p^{3} \] Hopf-Galois structures of $ M_{1} $ type on Galois extensions of fields with Galois group $ G \cong M_{1} $ and $ \left\lvert\varTheta(G) \right\rvert=p^{3} $. 
\end{lemma}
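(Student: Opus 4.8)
The plan is to enumerate the regular subgroups $G\subseteq\mathrm{Hol}(M_1)$ with $\lvert\varTheta(G)\rvert=p^{3}$ up to $\mathrm{Aut}(M_1)$-conjugacy, by the scheme of Subsection~\ref{SB2}. Since $\lvert\varTheta(G)\rvert=p^{3}=\lvert M_1\rvert$, the map $\varTheta|_G$ is an isomorphism onto $\varTheta(G)$, so $G\cap M_1=1$ and $G\cong\varTheta(G)$; as $\varTheta(G)$ is then a Sylow $p$-subgroup of $\mathrm{Aut}(M_1)$ we may assume $\varTheta(G)=\langle\alpha_1,\alpha_2,\alpha_3\rangle\cong M_1$, whence $G\cong M_1$ — in particular no skew brace of type other than $M_1$ occurs in this range ("no other"). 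Writing $g_i$ for the unique element of $G$ with $\varTheta(g_i)=\alpha_i$, we get $g_i=v_i\alpha_i$ with $v_i=\rho^{x_i}\sigma^{y_i}\tau^{z_i}$ determined by $G$, and $G=\langle g_1,g_2,g_3\rangle$.

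Pulling the relations of $M_1$ back through $\varTheta|_G$ forces $g_2g_1=g_1g_2$, $g_3g_1=g_1g_3$, $g_3g_2=g_1g_2g_3$ (while $g_i^{p}=1$ is automatic by (\ref{E110})), and I would solve these three identities for $v_1,v_2,v_3$ using (\ref{E66}) and the multiplication rule $\sigma^{a}\tau^{b}\sigma^{c}\tau^{d}=\rho^{bc}\sigma^{a+c}\tau^{b+d}$. This bookkeeping pins down $y_1=y_2=0$, $z_1=-y_3$, $y_3(y_3-2)=0$, plus one further linear relation among the remaining exponents coming from the $\rho$-component of $g_3g_2=g_1g_2g_3$. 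If $y_3=0$ then $v_1,v_2,v_3\in\langle\rho,\tau\rangle$; since each $\alpha_i$ maps the subgroup $\langle\rho,\tau\rangle$ into itself, each $g_i$, and hence $G$, preserves the subset $\langle\rho,\tau\rangle\subseteq M_1$, so $\mathrm{Orb}(1)$ has size $\le p^{2}$ and $G$ is not regular. If $y_3=2$ one gets the family $G=G(x_2,x_3,z_2,z_3)$ with $v_1=\rho^{1-z_2}\tau^{-2}$, $v_2=\rho^{x_2}\tau^{z_2}$, $v_3=\rho^{x_3}\sigma^{2}\tau^{z_3}$; here $\langle g_1,g_2\rangle$ stabilises each coset $\sigma^{b}\langle\rho,\tau\rangle$ acting by translations, while $g_3$ shifts the $\sigma$-exponent by $2$ and so cycles the $p$ cosets, so $G$ is regular iff $\langle g_1,g_2\rangle$ is transitive on one coset, i.e.\ iff $2x_2+z_2-z_2^{2}\not\equiv0\pmod p$. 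This yields $(p-1)p^{3}$ regular subgroups with $\varTheta$-image $\langle\alpha_1,\alpha_2,\alpha_3\rangle$.

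To obtain the four skew braces, observe that any automorphism conjugating one such $G$ to another must normalise $\langle\alpha_1,\alpha_2,\alpha_3\rangle$, hence lie in $H=\{\alpha_1^{r_1}\alpha_3^{r_3}\begin{psmallmatrix}b_1 & 0\\ b_3 & b_4\end{psmallmatrix}\}$, of order $(p-1)^{2}p^{3}$; using (\ref{E112}) and (\ref{E16}) with $b_2=0$ I would compute the transformation of $(x_2,x_3,z_2,z_3)$ under $H$-conjugation and check it has exactly four orbits, these being the desired skew braces (and since $\mathrm{Aut}_{\mathcal{B}r}(G_i)$ must normalise the box it is precisely the point stabiliser in $H$). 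Finally, for the Hopf-Galois count: each of the $p+1$ Sylow $p$-subgroups of $\mathrm{Aut}(M_1)$ is the $\varTheta$-image of exactly $(p-1)p^{3}$ regular subgroups isomorphic to $M_1$ (transport the box by an automorphism), and these families are disjoint, so by Theorem~\ref{T1} there are $(p+1)(p-1)p^{3}=(p^{2}-1)p^{3}$ Hopf-Galois structures in all; equivalently, $\lvert\mathrm{Aut}(M_1)\rvert/\lvert\mathrm{Aut}_{\mathcal{B}r}(G_i)\rvert=(p+1)s_i$ for the $H$-orbit size $s_i$, and summing via (\ref{E1}) with $\sum_i s_i=(p-1)p^{3}$ gives the same total. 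The main obstacle is the orbit classification: the $H$-conjugation formula carries the quadratic term from (\ref{E112})/(\ref{E16}), so pinning down exactly four orbits (not three or five) requires a careful case split on $2x_2+z_2-z_2^{2}$ and related quantities; everything else is a bounded if lengthy computation.
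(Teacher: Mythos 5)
Your reduction is the same as the paper's: fix $\varTheta(G)=\left\langle\alpha_{1},\alpha_{2},\alpha_{3}\right\rangle$, pull the relations of $M_{1}$ back through $\varTheta|_{G}$ to force $u_{2}=v_{2}=0$, $u_{3}=-w_{2}$ and $w_{2}(w_{2}-2)=0$, discard the $w_{2}=0$ branch by non-regularity, and parametrise the $w_{2}=2$ family by four exponents subject to one non-vanishing condition. Your regularity criterion $2x_{2}+z_{2}-z_{2}^{2}\not\equiv 0$ coincides with the paper's $v_{1}+\tfrac{1}{2}u_{1}(1-u_{1})\not\equiv 0$ under the change of parameters, and your block-system justification (on each coset of $\left\langle\rho,\tau\right\rangle$ the subgroup $\left\langle g_{1},g_{2}\right\rangle$ acts by pure translations by $v_{1},v_{2}$, while $g_{3}$ cycles the $p$ cosets) is cleaner than the paper's one-line check. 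The subgroup count $(p+1)(p-1)p^{3}$, the identification of the relevant normaliser $H$ of order $(p-1)^{2}p^{3}$, and both of your routes to the Hopf-Galois number are likewise exactly what the paper does.

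The one thing you have not done is the one thing the lemma actually asserts, namely that the $H$-action on this family has \emph{exactly four} orbits; you explicitly defer this as ``the main obstacle,'' so as written the argument is incomplete at its central point. For the record, the computation closes as follows (and your instinct about where the case split lives is correct): choosing $r_{1},r_{3}$ suitably one normalises $g_{1}\mapsto\tau^{-2}\alpha_{1}$, after which $g_{2}\mapsto\rho^{\left(v_{1}+\frac{1}{2}u_{1}(1-u_{1})\right)b_{1}^{2}}\tau\alpha_{2}$ and $g_{3}\mapsto\sigma^{2}\tau^{w_{3}b_{1}^{-1}b_{4}}\alpha_{3}$. As $b_{1}$ runs over $\mathbb{F}_{p}^{\times}$ the $\rho$-exponent of $g_{2}$ sweeps out precisely the square class of the (nonzero, by regularity) discriminant $2x_{2}+z_{2}-z_{2}^{2}$, and since $b_{4}$ remains free the $\tau$-exponent of $g_{3}$ can be normalised to $0$ or $1$ according as $z_{3}=0$ or not. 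These two quantities --- the square class of the discriminant and the vanishing of $z_{3}$ --- are manifestly invariant under $H$, so there are exactly $2\times 2=4$ orbits, with representatives as in (\ref{E3}). Until you exhibit these two invariants and check they are complete, ``exactly four (not three or five)'' is an assertion rather than a conclusion.
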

\begin{proof}
	If $ G \subseteq \mathrm{Hol}(M_{1}) $ with $ \left\lvert\varTheta(G) \right\rvert=p^{3} $, then we can assume, without loss of generality, that $ \varTheta(G) = \left\langle\alpha_{1},\alpha_{2},\alpha_{3}\right\rangle $, and so \[G=\left\langle u\alpha_{1},v\alpha_{2},w\alpha_{3}\right\rangle \]
	where $ u=\rho^{u_{1}}\sigma^{u_{2}}\tau^{u_{3}} $, $ v=\rho^{v_{1}}\sigma^{v_{2}}\tau^{v_{3}} $, $ w=\rho^{w_{1}}\sigma^{w_{2}}\tau^{w_{3}} $, and $ G $ is isomorphic to $ \varTheta(G)\cong M_{1} $. Now
	\begin{align*}
	\left(u\alpha_{1}\right)\left(v\alpha_{2}\right)&=\rho^{v_{2}}uv\alpha_{1}\alpha_{2}\ \text{and} \\
	\left(v\alpha_{2}\right)\left(u\alpha_{1}\right)&=\rho^{\frac{1}{2}u_{2}\left(u_{2}-1\right)+v_{3}u_{2}-u_{3}v_{2}-u_{2}^{2}-u_{2}v_{2}}\tau^{u_{2}}uv\alpha_{1}\alpha_{2}, 
	\end{align*}
	so we need $ u_{2}=0 $ and $ v_{2}\equiv-u_{3}v_{2}\ \mathrm{mod} \ p $. We have 
	\begin{align*}
	\left(u\alpha_{1}\right)\left(w\alpha_{3}\right)&=\rho^{w_{2}}uw\alpha_{1}\alpha_{3}\ \text{and} \\
	\left(w\alpha_{3}\right)\left(u\alpha_{1}\right)&=\rho^{u_{3}+w_{3}u_{2}-u_{3}w_{2}}uw\alpha_{1}\alpha_{3},
	\end{align*}
	so, since $ u_{2}=0 $, we need $ w_{2}\equiv u_{3}-u_{3}w_{2} \ \mathrm{mod} \ p $. Finally, we have 
	\begin{align*}
	\left(u\alpha_{1}\right)\left(v\alpha_{2}\right)\left(w\alpha_{3}\right)&=\left(\rho^{v_{2}}uv\alpha_{1}\alpha_{2}\right)\left(w\alpha_{3}\right)\\
	&=\rho^{u_{1}-v_{2}(w_{2}-1)-\frac{1}{2}w_{2}(w_{2}-1)}\tau^{u_{3}+w_{2}}vw\alpha_{1}\alpha_{2}\alpha_{3} \ \text{and}\\
	\left(w\alpha_{3}\right)\left(v\alpha_{2}\right)&=\rho^{v_{3}+w_{3}v_{2}-v_{3}w_{2}}vw\alpha_{3}\alpha_{2}, 
	\end{align*}
	so we need $ u_{3}+w_{2}\equiv 0 \ \mathrm{mod} \ p $ and 
	\[u_{1}-v_{2}(w_{2}-1)-\frac{1}{2}w_{2}(w_{2}-1) \equiv v_{3}+w_{3}v_{2}-v_{3}w_{2} \ \mathrm{mod} \ p.\]
	Combining the above information, for $ G $ to be a group of order $ p^{3} $, we need, modulo $ p $,
	\begin{align}\label{E63}
	&u_{2}=0, \ v_{2}=-u_{3}v_{2}, \ w_{2}=u_{3}-u_{3}w_{2}, \ u_{3}=-w_{2}, \nonumber\\
	&u_{1}-v_{2}(w_{2}-1)-\frac{1}{2}w_{2}(w_{2}-1) = v_{3}+w_{3}v_{2}-v_{3}w_{2}.
	\end{align}
	Now the equations $ w_{2}=u_{3}-u_{3}w_{2} $ and $ u_{3}=-w_{2} $ imply that \[ u_{3}=-w_{2}=0,-2. \] Given this, the equation $ v_{2}=-u_{3}v_{2} $ implies that $ v_{2}=0 $. Now the final equation in (\ref{E63}) reduces to 
	\[u_{1}-\frac{1}{2}w_{2}(w_{2}-1) = v_{3}-v_{3}w_{2}.\]
	
	Thus, we can consider two cases for $ w_{2}=0 $ and $ w_{2}=2 $. If $ w_{2}=0 $, then $ u,v $ and $ w $ are of the form
	\[u=\rho^{u_{1}}, \ v=\rho^{v_{1}}\tau^{u_{1}}, \ w=\rho^{w_{1}}\tau^{w_{3}}, \]
	and in this case $ G $ cannot be regular. Therefore, we must set $ w_{2}=2 $, hence $ u,v $, and $ w $ are of the form
	\[u=\rho^{u_{1}}\tau^{-2}, \ v=\rho^{v_{1}}\tau^{1-u_{1}}, \ w=\rho^{w_{1}}\sigma^{2}\tau^{w_{3}}. \]
	Now for $ G $ to be regular we need 
	\[\left(u\alpha_{1}\right)^{\frac{1}{2}(1-u_{1})}\left(w\alpha_{3}\right)=\rho^{v_{1}+\frac{1}{2}u_{1}(1-u_{1})}\alpha_{1}^{\frac{1}{2}(1-u_{1})}\alpha_{3}\not\in \mathrm{Aut}(M_{1}),\]
	so we need $ v_{1}+\frac{1}{2}u_{1}(1-u_{1})\not\equiv 0  \ \mathrm{mod} \ p $. Therefore, $ G $ is conjugate to 
	\[\left\langle \rho^{u_{1}}\tau^{-2}\alpha_{1},\rho^{v_{1}}\tau^{1-u_{1}}\alpha_{2},\rho^{w_{1}}\sigma^{2}\tau^{w_{3}}\alpha_{3}\right\rangle\cong M_{1} \]\[ \text{for} \ u_{1},v_{1},w_{1},w_{3}=0,...,p-1 \ \text{with} \ v_{1}+\frac{1}{2}u_{1}(1-u_{1})\not\equiv 0  \ \mathrm{mod} \ p,\]
	and there are (taking into account the $ p+1 $ conjugates) 
	\[(p+1)(p-1)p^{3}\]
	of these. 
	
	To find the non-isomorphic skew braces corresponding to the above regular subgroups, it suffices to conjugate by automorphisms of the form $ \alpha\stackrel{\mathrm{def}}{=}\beta\gamma \in \mathrm{Aut}(M_{1}) $, where $ \beta \stackrel{\mathrm{def}}{=} \begin{psmallmatrix} b_{1} & 0 \\ b_{3} & b_{4} \end{psmallmatrix}  \in \mathrm{GL}_{2}(\mathbb{F}_{p}) $ and $ \gamma\stackrel{\mathrm{def}}{=} \alpha_{1}^{r_{1}}\alpha_{3}^{r_{3}} \in C_{p}^{2}  $. Now using (\ref{E112}) and (\ref{E16}) we have 
	\begin{align*}
	\alpha\left(u\alpha_{1}\right)^{b_{4}^{-1}}\alpha^{-1}&=\left(\alpha\cdot u^{b_{4}^{-1}}\right)\alpha_{1},\\
	\alpha\left(v\alpha_{2}\right)^{b_{1}b_{4}^{-1}}\alpha^{-1}&=\left(\alpha\cdot v^{b_{1}b_{4}^{-1}}\right)\alpha_{1}^{r_{3}+\frac{1}{2}\left(1-b_{1}\right)}\alpha_{2},\\
	\alpha\left(w\alpha_{3}\right)^{b_{1}^{-1}}\alpha^{-1}&=\left(\alpha\cdot\left(\rho^{\frac{1}{2}w_{3}b_{1}^{-1}\left(b_{1}^{-1}-1\right)}w^{b_{1}^{-1}}\right)\right)\alpha_{1}^{-b_{1}^{-1}b_{3}}\alpha_{3}, 
	\end{align*}
	so we have 
	\begin{align*}
	&\alpha\left(u\alpha_{1}\right)^{b_{4}^{-1}}\alpha^{-1}=\left(\alpha\cdot u^{b_{4}^{-1}}\right)\alpha_{1},\\
	&\alpha\left(u\alpha_{1}\right)^{-r_{3}b_{4}^{-1}-\frac{1}{2}b_{4}^{-1}(1-b_{1})}\left(v\alpha_{2}\right)^{b_{1}b_{4}^{-1}}\alpha^{-1}=\left(\alpha\cdot \left(u^{-r_{3}b_{4}^{-1}-\frac{1}{2}b_{4}^{-1}(1-b_{1})}v^{b_{1}b_{4}^{-1}}\right)\right)\alpha_{2},\\
	&\alpha\left(u\alpha_{1}\right)^{b_{1}^{-1}b_{3}b_{4}^{-1}}\left(w\alpha_{3}\right)^{b_{1}^{-1}}\alpha^{-1}=\left(\left(\alpha\cdot u^{b_{1}^{-1}b_{3}b_{4}^{-1}}\right)\alpha\alpha_{1}^{b_{1}^{-1}b_{3}b_{4}^{-1}}\cdot\left(\rho^{\frac{1}{2}w_{3}b_{1}^{-1}\left(b_{1}^{-1}-1\right)}w^{b_{1}^{-1}}\right)\right)\alpha_{3}. 
	\end{align*}
	Note that we have \[ \alpha=\begin{bsmallmatrix} b_{1}b_{4} & \frac{1}{2}b_{1}b_{3}+r_{1}b_{1}+r_{3}b_{3} & r_{3}b_{4} \\ 0 & b_{1} & 0 \\ 0 & b_{3} & b_{4} \end{bsmallmatrix},\] 
	We let $ b_{5}\stackrel{\mathrm{def}}{=}\frac{1}{2}b_{1}b_{3}+r_{1}b_{1}+r_{3}b_{3} $. Now  
	\begin{align*}
	&\alpha\cdot u^{b_{4}^{-1}}=\rho^{u_{1}b_{1}-2r_{3}}\tau^{-2},\\
	\alpha\cdot &\left(u^{-r_{3}b_{4}^{-1}-\frac{1}{2}b_{4}^{-1}(1-b_{1})}v^{b_{1}b_{4}^{-1}}\right)=\rho^{r_{3}\left(2r_{3}+1\right)+v_{1}b_{1}^{2}+\frac{1}{2}u_{1}b_{1}(b_{1}-1)-2r_{3}u_{1}b_{1}}\tau^{1+2r_{3}-u_{1}b_{1}},\\
	&\left(\alpha\cdot u^{b_{1}^{-1}b_{3}b_{4}^{-1}}\right)\left(\alpha\alpha_{1}^{b_{1}^{-1}b_{3}b_{4}^{-1}}\cdot\left(\rho^{\frac{1}{2}w_{3}b_{1}^{-1}\left(b_{1}^{-1}-1\right)}w^{b_{1}^{-1}}\right)\right)=\rho^{b_{3}u_{1}-2r_{3}b_{1}^{-1}b_{3}}\tau^{-2b_{1}^{-1}b_{3}}\\
	&\rho^{\frac{3}{2}w_{3}b_{4}\left(b_{1}^{-1}-1\right)+b_{4}w_{1}+2b_{1}^{-1}b_{3}+2b_{1}^{-1}b_{5}+b_{3}(2b_{1}^{-1}-1)}\sigma^{2}\tau^{2b_{1}^{-1}b_{3}+w_{3}b_{1}^{-1}b_{4}}\\
	&=\rho^{2r_{1}+\frac{3}{2}w_{3}b_{4}\left(b_{1}^{-1}-1\right)+b_{4}w_{1}+u_{1}b_{3}}\sigma^{2}\tau^{w_{3}b_{1}^{-1}b_{4}}.
	\end{align*}
	We let 
	\begin{align*}
	r_{1}&=-\frac{3}{4}w_{3}b_{4}\left(b_{1}^{-1}-1\right)-\frac{1}{2}b_{4}w_{1}-\frac{1}{2}u_{1}b_{3},\\
	r_{3}&=\frac{1}{2}u_{1}b_{1},
	\end{align*}
	which gives us 
	\begin{align*}
	&\alpha\cdot u^{b_{4}^{-1}}=\tau^{-2},\\
	\alpha\cdot &\left(u^{-r_{3}b_{4}^{-1}-\frac{1}{2}b_{4}^{-1}(1-b_{1})}v^{b_{1}b_{4}^{-1}}\right)=\rho^{\left(v_{1}+\frac{1}{2}u_{1}\left(1-u_{1}\right)\right)b_{1}^{2}}\tau,\\
	&\left(\alpha\cdot u^{b_{1}^{-1}b_{3}b_{4}^{-1}}\right)\left(\alpha\alpha_{1}^{b_{1}^{-1}b_{3}b_{4}^{-1}}\cdot\left(\rho^{\frac{1}{2}w_{3}b_{1}^{-1}\left(b_{1}^{-1}-1\right)}w^{b_{1}^{-1}}\right)\right)=\sigma^{2}\tau^{w_{3}b_{1}^{-1}b_{4}}.
	\end{align*}
	Next, for a fixed $ \delta \in \mathbb{F}_{p}^{\times} $ which is not a square, we can write \[ \left(v_{1}+\frac{1}{2}u_{1}\left(1-u_{1}\right)\right)=s_{1}^{2}s \] where $ s_{1}\in \mathbb{F}_{p}^{\times}  $ and $ s=1,\delta $. Letting $ b_{1}=\pm s_{1}^{-1} $ we get 
	\begin{align*}
	&\alpha\cdot u^{b_{4}^{-1}}=\tau^{-2},\\
	&\alpha\cdot \left(u^{-r_{3}b_{4}^{-1}-\frac{1}{2}b_{4}^{-1}(1-b_{1})}v^{b_{1}b_{4}^{-1}}\right)=\rho^{s}\tau,\\
	&\left(\alpha\cdot u^{b_{1}^{-1}b_{3}b_{4}^{-1}}\right)\left(\alpha\alpha_{1}^{b_{1}^{-1}b_{3}b_{4}^{-1}}\cdot\left(\rho^{\frac{1}{2}w_{3}b_{1}^{-1}\left(b_{1}^{-1}-1\right)}w^{b_{1}^{-1}}\right)\right)=\sigma^{2}\tau^{\pm s_{1}w_{3}b_{4}}.
	\end{align*}
	
	Therefore, every such regular subgroup is conjugate to 
	\begin{align}\label{E3}
	\left\langle\tau^{-2}\alpha_{1},\rho^{s}\tau\alpha_{2},\sigma^{2}\tau^{t_{3}}\alpha_{3}\right\rangle\cong M_{1} \ \text{for} \ t_{3}=0,1, \ s=1,\delta,
	\end{align}
	and these subgroups are not further conjugate to each other, so they give us four non-isomorphic skew braces.
	
	To find the number of corresponding Hopf-Galois structures, we need to find the automorphism groups of above skew braces. We let \[ \alpha=\gamma\beta \in \mathrm{Aut}(M_{1})\ \text{where}\  \gamma\stackrel{\mathrm{def}}{=} \alpha_{1}^{r_{1}}\alpha_{3}^{r_{3}}, \ \beta \stackrel{\mathrm{def}}{=} \begin{pmatrix} b_{1} & b_{2} \\ b_{3} & b_{4} \end{pmatrix},\]
	and set $ b_{2}=0 $. If $ \alpha \in \mathrm{Aut}_{\mathcal{B}r}(\left\langle\tau^{-2}\alpha_{1},\rho^{s}\tau\alpha_{2},\sigma^{2}\tau^{t_{3}}\alpha_{3}\right\rangle) $, since by our notation above we have
	\begin{align*}
	&\alpha\cdot u^{b_{4}^{-1}}=\rho^{-2r_{3}}\tau^{-2},\\
	\alpha\cdot &\left(u^{-r_{3}b_{4}^{-1}-\frac{1}{2}b_{4}^{-1}(1-b_{1})}v^{b_{1}b_{4}^{-1}}\right)=\rho^{r_{3}\left(2r_{3}+1\right)+sb_{1}^{2}}\tau^{1+2r_{3}},\\
	&\left(\alpha\cdot u^{b_{1}^{-1}b_{3}b_{4}^{-1}}\right)\left(\alpha\alpha_{1}^{b_{1}^{-1}b_{3}b_{4}^{-1}}\cdot\left(\rho^{\frac{1}{2}t_{3}b_{1}^{-1}\left(b_{1}^{-1}-1\right)}w^{b_{1}^{-1}}\right)\right)=\rho^{2r_{1}+\frac{3}{2}t_{3}b_{4}\left(b_{1}^{-1}-1\right)}\sigma^{2}\tau^{t_{3}b_{1}^{-1}b_{4}}.
	\end{align*}
	we must have $ r_{3}=0 $, $ b_{1}^{2}=1 $, $ r_{1}=\frac{3}{4}t_{3}b_{4}\left(1-b_{1}^{-1}\right) $, further $ b_{1}=b_{4} $ if $ t_{3}=1 $. Therefore, we have
	\begin{align*}
	\mathrm{Aut}_{\mathcal{B}r}(\left\langle\tau^{-2}\alpha_{1},\rho^{s}\tau\alpha_{2},\sigma^{2}\alpha_{3}\right\rangle)&=\left\{ \alpha \in \mathrm{Aut}(M_{1}) \mid \alpha= \begin{psmallmatrix} \pm 1 & 0 \\ b_{3} & b_{4} \end{psmallmatrix} \right\},\\
	\mathrm{Aut}_{\mathcal{B}r}(\left\langle\tau^{-2}\alpha_{1},\rho^{s}\tau\alpha_{2},\tau\sigma^{2}\alpha_{3}\right\rangle)&=\left\{ \alpha \in \mathrm{Aut}(M_{1}) \mid \alpha= \alpha_{1}^{\frac{3}{4}\left(\pm 1-1\right)}\begin{psmallmatrix} \pm 1 & 0 \\ b_{3} & \pm 1 \end{psmallmatrix} \right\}.
	\end{align*} 
	Now again we find 
	\begin{align*}
	 &e(M_{1},M_{1},p^{3})= \sum_{(M_{1})_{M_{1}}(p^{3})}\dfrac{\left\lvert\mathrm{Aut}(M_{1})\right\rvert}{\left\lvert\mathrm{Aut}_{\mathcal{B}r}((M_{1})_{M_{1}}(p^{3})\right\rvert}=\\
	 &\dfrac{2\left\lvert\mathrm{Aut}(M_{1})\right\rvert}{\left\lvert\mathrm{Aut}_{\mathcal{B}r}(\left\langle\tau^{-2}\alpha_{1},\rho\tau\alpha_{2},\sigma^{2}\alpha_{3}\right\rangle)\right\rvert}+\dfrac{2\left\lvert\mathrm{Aut}(M_{1})\right\rvert}{ \left\lvert\mathrm{Aut}_{\mathcal{B}r}(\left\langle\tau^{-2}\alpha_{1},\rho\tau\alpha_{2},\tau\sigma^{2}\alpha_{3}\right\rangle)\right\rvert}\\
	 &=\dfrac{2(p^{2}-1)(p-1)p^{3}}{2(p-1)p}+\dfrac{2(p^{2}-1)(p-1)p^{3}}{2p}=(p^{2}-1)p^{3}.
	 \end{align*} 
\end{proof}
\subsection{Socle and annihilator of skew braces of $ M_{1} $ type}
	Finally, we note that from our classification of skew braces we are also able to determine their \textit{socle} and \textit{annihilator}. Let 
	$ B=\left(B,\oplus,\odot\right) $ be a skew brace. As before we let 
	\begin{align*}
	m: \left(B,\odot\right) &\longrightarrow \mathrm{Hol}\left(B,\oplus\right)\\
	a&\longmapsto \left(m_{a} : b \longmapsto a\odot b\right)
	\end{align*}
	and set
	\begin{align*}
	\varTheta : \mathrm{Hol}\left(B,\oplus\right)& \longrightarrow \mathrm{Aut}\left(B,\oplus\right)\\
	\eta\alpha&\longmapsto \alpha.
	\end{align*} 
	We shall denote by $ \lambda = \varTheta m $. Then $ \Ker \lambda = \Ima m \cap \left(B,\oplus\right) $ inside $ \mathrm{Hol}\left(B,\oplus\right) $.
	 
	 First we note that \cite[cf.][p.~ 23]{MR3763907} an \textit{ideal} of a skew brace $ B=\left(B,\oplus,\odot\right) $ is defined to be a subset $ I \subseteq B $, such that $ I $ is a normal subgroup with respect to both operations $ \oplus $ and $ \odot $, and $ \lambda_{a}(I)\subseteq I $ for all $ a \in B $.  The \textit{socle} of $ B $ is defined to be
	\[\mathrm{Soc}(B)\stackrel{\mathrm{def}}{=}\{a\in B\mid a\oplus b=a\odot b, \ b\oplus(b\odot a)=(b\odot a)\oplus b\ \text{for all} \ b \in B \},\]
	which is an ideal of $ B $, and one has $ \mathrm{Soc}(B)=\Ker \lambda \cap \mathrm{Z} \left(B,\oplus\right) $. Finally, \cite[cf.][Definition 7]{doi:10.1142/S0219498819500336}, the \textit{annihilator} of $ B $ is defined to be 
	\[\mathrm{Ann}(B)\stackrel{\mathrm{def}}{=}\mathrm{Soc}(B)\cap \mathrm{Z} \left(B,\odot\right)=\Ker \lambda \cap \mathrm{Z} \left(B,\oplus\right)\cap\mathrm{Z} \left(B,\odot\right),\]
	which is also an ideal of $ B $.
	
	Now we aim to explain what each of these terms, ideal, socle, and annihilator, correspond to if we are given a regular subgroup $ H \subseteq \mathrm{Hol}\left(N\right) $ and we consider it as a skew brace. Recall first from Subsection \ref{SB2}, given a regular subgroup $ H \subseteq \mathrm{Hol}\left(N\right) $, it can be represented as
	\[H=\left\langle \eta_{1},...,\eta_{r},v_{1}\alpha_{1},...,v_{s}\alpha_{s} \right\rangle,\]
	for $ H_{1}\stackrel{\mathrm{def}}{=}\left\langle \eta_{1},...,\eta_{r}\right\rangle\subseteq N $ and $ H_{2}\stackrel{\mathrm{def}}{=}\left\langle \alpha_{1},...,\alpha_{s} \right\rangle \subseteq \mathrm{Aut}\left(N\right) $ and some $ v_{1},...,v_{s} \in N $. Note also that we have a bijection 
	\begin{align*}
	\psi:H&\longrightarrow N \\
	g&\longmapsto g_{1}\stackrel{\mathrm{def}}{=}g(1_{N}).
	\end{align*} 
	To get a skew brace we can set $ \left(H,\odot\right)=H $ and define $ \oplus $ on $ H $ by 
	\[g\oplus h=\psi^{-1}\left(g_{1}h_{1}\right), \] 
	which makes $ \left(H,\oplus,\odot\right) $ into a skew brace with $ \left(H,\oplus\right)\stackrel{\psi}{\cong}N $. Note the map $ \psi $ now induces an isomorphism 
	\begin{align*}
	\mathrm{Hol}\left(H,\oplus\right)&\longrightarrow \mathrm{Hol}\left(N\right) \\
	g\beta&\longmapsto g_{1}\psi\beta\psi^{-1},
	\end{align*} 
	which maps $ \Ker \lambda $ to $ H_{1} $, and $ \Ima \lambda $ to $ H_{2} $.
	
	Now for a subset $ I \subseteq H $ to be an ideal of $ H $ considered as a skew brace, we need $ I \subseteq \left(H,\odot\right)  $ to be a normal subgroup, $ \psi\left(I\right) \subseteq N $ to be a normal subgroup (so $ I \subseteq \psi^{-1}\left(N\right)=\left(H,\oplus\right) $ is a normal subgroup) and $ H_{2}\left(\psi\left(I\right)\right)\subseteq \psi\left(I\right)  $. Furthermore, one has \[ \mathrm{Soc}(H)= \Ker \lambda \cap \mathrm{Z} \left(H,\oplus\right)=H_{1} \cap \psi^{-1}\left(Z\left(N\right)\right), \]
	and 
	\[\mathrm{Ann}(H)=H_{1} \cap \psi^{-1}\left(Z\left(N\right)\right)\cap\mathrm{Z} \left(H\right).\]
	
	Recall the skew braces of $ M_{1} $ type, apart from the trivial skew brace $ \langle\rho,\sigma,\tau\rangle $, as found in Lemmas \ref{L16}, \ref{L17}, \ref{L18} are as follows. 
	\begin{itemize}
		\item For $ \left\lvert\Ker \lambda \right\rvert=p^{2} $ form Lemma \ref{L16}, (\ref{E54}) we have non-isomorphic skew braces
		\begin{align*}
		&\left\langle \rho,\tau, \sigma\alpha_{3} \right\rangle, \left\langle\rho, \tau, \sigma\alpha_{2}\alpha_{3}  \right\rangle   \cong C_{p}^{3} , \  \left\langle  \rho, \tau, \sigma\alpha_{1}  \right\rangle, \left\langle \rho,\tau, \sigma\alpha_{2} \right\rangle,\nonumber\\
		&\left\langle \rho,\tau, \sigma\alpha_{3}^{c} \right\rangle, \left\langle\rho, \tau, \sigma\alpha_{2}\alpha_{3}^{c}  \right\rangle  \cong M_{1} \ \text{for} \ c=2,...,p-1, 
		\end{align*}
		so in all these cases we have 
		\[\mathrm{Soc}(H)=\mathrm{Ann}(H)=\left\langle \rho \right\rangle.\] 
		\item For $ \left\lvert\Ker \lambda \right\rvert=p $ from Lemma \ref{L17}, (\ref{E57}) and (\ref{E62}), we have non-isomorphic skew braces \[\left\langle \rho, \sigma\alpha_{1}, \sigma^{u_{3}}\tau^{u_{4}}\alpha_{3} \right\rangle,\left\langle \rho, \tau^{-u_{5}}\alpha_{1}, \sigma^{u_{5}}\alpha_{3} \right\rangle,\left\langle \rho, \tau^{x_{3}}\alpha_{1}, \sigma\alpha_{2}\alpha_{3}^{a} \right\rangle \cong M_{1}, \]
		\[\left\langle \rho, \sigma\alpha_{1}, \sigma^{u_{2}}\tau^{u_{2}}\alpha_{3} \right\rangle,\left\langle \rho, \tau^{-2}\alpha_{1}, \sigma^{2}\alpha_{3} \right\rangle,\left\langle \rho, \tau^{x_{3}}\alpha_{1}, \sigma\alpha_{2}\alpha_{3}^{\left(1+x_{3}\right)x_{3}^{-1}} \right\rangle \cong C_{p}^{3} \ \text{for} \]
		\[a,u_{3}=0,...,p-1, \  u_{2},u_{4},u_{5},x_{3},=1,...,p-1 \]
		\[\text{with}  \ u_{5}\neq2, \ u_{3}-u_{4},\ ax_{3}-\left(1+x_{3}\right)\not\equiv 0 \ \mathrm{mod}\ p,  \]
		so in all these cases we also have 
		\[\mathrm{Soc}(H)=\mathrm{Ann}(H)=\left\langle \rho \right\rangle.\] 
		\item For $ \left\lvert\Ker \lambda \right\rvert=1 $ from Lemma \ref{L18}, (\ref{E3}) we have non-isomorphic skew braces \[\left\langle\tau^{-2}\alpha_{1},\rho^{s}\tau\alpha_{2},\sigma^{2}\tau^{t_{3}}\alpha_{3}\right\rangle\cong M_{1} \ \text{for} \ t_{3}=0,1, \ s=1,\delta,\]
		so in all these cases have
		\[\mathrm{Soc}(H)=\mathrm{Ann}(H)=1.\] 
	\end{itemize}	
\begin{center}
	\large\textbf{Acknowledgements}
\end{center} The author is ever indebted to Prof Nigel Byott and Prof Agata  Smoktunowicz for their continued support and useful suggestions. The author is ever grateful for the referee's comments which lead to numerous improvements to the manuscript.

This research was partially supported by the ERC Advanced grant 320974. The author obtained part of the results in this paper while studying for a PhD degree at the University of Exeter funded by an EPSRC Doctoral Training Grant.
\section{References}
\bibliographystyle{elsarticle-num} 
\bibliography{mybibfile}

\begin{thebibliography}{10}
\expandafter\ifx\csname url\endcsname\relax
  \def\url#1{\texttt{#1}}\fi
\expandafter\ifx\csname urlprefix\endcsname\relax\def\urlprefix{URL }\fi
\expandafter\ifx\csname href\endcsname\relax
  \def\href#1#2{#2} \def\path#1{#1}\fi

\bibitem{MR2278047}
W.~Rump, \href{http://dx.doi.org/10.1016/j.jalgebra.2006.03.040}{Braces,
  radical rings, and the quantum {Y}ang-{B}axter equation}, J. Algebra 307~(1)
  (2007) 153--170.
\newblock \href {http://dx.doi.org/10.1016/j.jalgebra.2006.03.040}
  {\path{doi:10.1016/j.jalgebra.2006.03.040}}.
\newline\urlprefix\url{http://dx.doi.org/10.1016/j.jalgebra.2006.03.040}

\bibitem{MR3177933}
F.~Ced\'o, E.~Jespers, J.~Okni\'nski,
  \href{http://dx.doi.org/10.1007/s00220-014-1935-y}{Braces and the
  {Y}ang-{B}axter equation}, Comm. Math. Phys. 327~(1) (2014) 101--116.
\newblock \href {http://dx.doi.org/10.1007/s00220-014-1935-y}
  {\path{doi:10.1007/s00220-014-1935-y}}.
\newline\urlprefix\url{http://dx.doi.org/10.1007/s00220-014-1935-y}

\bibitem{MR3527540}
D.~Bachiller, F.~Ced\'o, E.~Jespers,
  \href{http://dx.doi.org/10.1016/j.jalgebra.2016.05.024}{Solutions of the
  {Y}ang-{B}axter equation associated with a left brace}, J. Algebra 463 (2016)
  80--102.
\newblock \href {http://dx.doi.org/10.1016/j.jalgebra.2016.05.024}
  {\path{doi:10.1016/j.jalgebra.2016.05.024}}.
\newline\urlprefix\url{http://dx.doi.org/10.1016/j.jalgebra.2016.05.024}

\bibitem{MR3647970}
L.~Guarnieri, L.~Vendramin, \href{http://dx.doi.org/10.1090/mcom/3161}{Skew
  braces and the {Y}ang-{B}axter equation}, Math. Comp. 86~(307) (2017)
  2519--2534.
\newblock \href {http://dx.doi.org/10.1090/mcom/3161}
  {\path{doi:10.1090/mcom/3161}}.
\newline\urlprefix\url{http://dx.doi.org/10.1090/mcom/3161}

\bibitem{MR3763907}
A.~Smoktunowicz, L.~Vendramin, \href{https://doi.org/10.4171/JCA/2-1-3}{On skew
  braces (with an appendix by {N}. {B}yott and {L}. {V}endramin)}, J. Comb.
  Algebra 2~(1) (2018) 47--86.
\newblock \href {http://dx.doi.org/10.4171/JCA/2-1-3}
  {\path{doi:10.4171/JCA/2-1-3}}.
\newline\urlprefix\url{https://doi.org/10.4171/JCA/2-1-3}

\bibitem{MR0260724}
S.~U. Chase, M.~E. Sweedler, Hopf algebras and {G}alois theory, Lecture Notes
  in Mathematics, Vol. 97, Springer-Verlag, Berlin-New York, 1969.

\bibitem{MR878476}
C.~Greither, B.~Pareigis,
  \href{http://dx.doi.org/10.1016/0021-8693(87)90029-9}{Hopf-{G}alois theory
  for separable field extensions}, J. Algebra 106~(1) (1987) 239--258.
\newblock \href {http://dx.doi.org/10.1016/0021-8693(87)90029-9}
  {\path{doi:10.1016/0021-8693(87)90029-9}}.
\newline\urlprefix\url{http://dx.doi.org/10.1016/0021-8693(87)90029-9}

\bibitem{MR1402555}
N.~P. Byott, \href{http://dx.doi.org/10.1080/00927879608825743}{Uniqueness of
  {H}opf-{G}alois structure for separable field extensions}, Comm. Algebra
  24~(10) (1996) 3217--3228.
\newblock \href {http://dx.doi.org/10.1080/00927879608825743}
  {\path{doi:10.1080/00927879608825743}}.
\newline\urlprefix\url{http://dx.doi.org/10.1080/00927879608825743}

\bibitem{MR2030805}
N.~P. Byott, \href{http://dx.doi.org/10.1016/j.jpaa.2003.10.010}{Hopf-{G}alois
  structures on {G}alois field extensions of degree {$pq$}}, J. Pure Appl.
  Algebra 188~(1-3) (2004) 45--57.
\newblock \href {http://dx.doi.org/10.1016/j.jpaa.2003.10.010}
  {\path{doi:10.1016/j.jpaa.2003.10.010}}.
\newline\urlprefix\url{http://dx.doi.org/10.1016/j.jpaa.2003.10.010}

\bibitem{MR2363137}
N.~P. Byott,
  \href{http://dx.doi.org/10.1016/j.jalgebra.2007.04.010}{Hopf-{G}alois
  structures on almost cyclic field extensions of 2-power degree}, J. Algebra
  318~(1) (2007) 351--371.
\newblock \href {http://dx.doi.org/10.1016/j.jalgebra.2007.04.010}
  {\path{doi:10.1016/j.jalgebra.2007.04.010}}.
\newline\urlprefix\url{http://dx.doi.org/10.1016/j.jalgebra.2007.04.010}

\bibitem{MR3715201}
A.~A. Alabdali, N.~P. Byott,
  \href{https://doi.org/10.1016/j.jalgebra.2017.09.009}{Counting
  {H}opf-{G}alois structures on cyclic field extensions of squarefree degree},
  J. Algebra 493 (2018) 1--19.
\newblock \href {http://dx.doi.org/10.1016/j.jalgebra.2017.09.009}
  {\path{doi:10.1016/j.jalgebra.2017.09.009}}.
\newline\urlprefix\url{https://doi.org/10.1016/j.jalgebra.2017.09.009}

\bibitem{MR1704676}
S.~Carnahan, L.~Childs,
  \href{http://dx.doi.org/10.1006/jabr.1999.7861}{Counting {H}opf-{G}alois
  structures on non-abelian {G}alois field extensions}, J. Algebra 218~(1)
  (1999) 81--92.
\newblock \href {http://dx.doi.org/10.1006/jabr.1999.7861}
  {\path{doi:10.1006/jabr.1999.7861}}.
\newline\urlprefix\url{http://dx.doi.org/10.1006/jabr.1999.7861}

\bibitem{MR1644203}
T.~Kohl, \href{http://dx.doi.org/10.1006/jabr.1998.7479}{Classification of the
  {H}opf-{G}alois structures on prime power radical extensions}, J. Algebra
  207~(2) (1998) 525--546.
\newblock \href {http://dx.doi.org/10.1006/jabr.1998.7479}
  {\path{doi:10.1006/jabr.1998.7479}}.
\newline\urlprefix\url{http://dx.doi.org/10.1006/jabr.1998.7479}

\bibitem{CS}
T.~Crespo, M.~Salguero, \href{https://arxiv.org/abs/1807.11409}{{H}opf {G}alois
  structures on separable field extensions of odd prime power degree}, Preprint
  on ArXiv.org.
\newline\urlprefix\url{https://arxiv.org/abs/1807.11409}

\bibitem{MR2298848}
W.~Rump, \href{http://dx.doi.org/10.1016/j.jpaa.2006.07.001}{Classification of
  cyclic braces}, J. Pure Appl. Algebra 209~(3) (2007) 671--685.
\newblock \href {http://dx.doi.org/10.1016/j.jpaa.2006.07.001}
  {\path{doi:10.1016/j.jpaa.2006.07.001}}.
\newline\urlprefix\url{http://dx.doi.org/10.1016/j.jpaa.2006.07.001}

\bibitem{MR3320237}
D.~Bachiller,
  \href{http://dx.doi.org/10.1016/j.jpaa.2014.12.013}{Classification of braces
  of order {$p^3$}}, J. Pure Appl. Algebra 219~(8) (2015) 3568--3603.
\newblock \href {http://dx.doi.org/10.1016/j.jpaa.2014.12.013}
  {\path{doi:10.1016/j.jpaa.2014.12.013}}.
\newline\urlprefix\url{http://dx.doi.org/10.1016/j.jpaa.2014.12.013}

\bibitem{doi:10.1142/S0219498819500336}
F.~Catino, I.~Colazzo, P.~Stefanelli,
  \href{https://www.worldscientific.com/doi/abs/10.1142/S0219498819500336}{Skew
  left braces with non-trivial annihilator}, Journal of Algebra and Its
  Applications\href {http://dx.doi.org/10.1142/S0219498819500336}
  {\path{doi:10.1142/S0219498819500336}}.
\newline\urlprefix\url{https://www.worldscientific.com/doi/abs/10.1142/S0219498819500336}

\bibitem{CD}
C.~Dietzel, \href{https://arxiv.org/abs/1801.06911}{Braces of order $ p^{2}q
  $}, Preprint on ArXiv.org.
\newline\urlprefix\url{https://arxiv.org/abs/1801.06911}

\bibitem{KSV}
A.~Konovalov, A.~Smoktunowicz, V.~L.,
  \href{https://arxiv.org/abs/1804.04106}{On skew braces and their ideals},
  Preprint on ArXiv.org.
\newline\urlprefix\url{https://arxiv.org/abs/1804.04106}

\bibitem{SS}
A.~Smoktunowicz~(Agata), A.~Smoktunowicz~(Alicia),
  \href{https://arxiv.org/abs/1704.03558}{Set-theoretic solutions of the
  {Y}ang-{B}axter equation and new classes of {R}-matrices}, Preprint on
  ArXiv.org.
\newline\urlprefix\url{https://arxiv.org/abs/1704.03558}

\bibitem{LV}
L.~Vendramin, \href{https://arxiv.org/abs/1807.06411}{{P}roblems on skew left
  braces}, Preprint on ArXiv.org.
\newline\urlprefix\url{https://arxiv.org/abs/1807.06411}

\bibitem{KNZ}
K.~Nejabati~Zenouz,
  \href{https://ore.exeter.ac.uk/repository/handle/10871/32248}{On Hopf-Galois
  Structures and Skew Braces of Order $ p^{3} $}, The University of Exeter, PhD
  Thesis, Supervised by Prof N. Byott, Funded by EPSRC DTG, January 2018.
\newline\urlprefix\url{https://ore.exeter.ac.uk/repository/handle/10871/32248}

\bibitem{MR1879021}
N.~P. Byott, \href{https://doi.org/10.1006/jabr.2001.9053}{Integral
  {H}opf-{G}alois structures on degree {$p^2$} extensions of {$p$}-adic
  fields}, J. Algebra 248~(1) (2002) 334--365.
\newblock \href {http://dx.doi.org/10.1006/jabr.2001.9053}
  {\path{doi:10.1006/jabr.2001.9053}}.
\newline\urlprefix\url{https://doi.org/10.1006/jabr.2001.9053}

\bibitem{MR1767499}
L.~N. Childs, \href{http://dx.doi.org/10.1090/surv/080}{Taming wild extensions:
  {H}opf algebras and local {G}alois module theory}, Vol.~80 of Mathematical
  Surveys and Monographs, American Mathematical Society, Providence, RI, 2000.
\newblock \href {http://dx.doi.org/10.1090/surv/080}
  {\path{doi:10.1090/surv/080}}.
\newline\urlprefix\url{http://dx.doi.org/10.1090/surv/080}

\bibitem{DVO}
D.~V. Osipov, \href{https://arxiv.org/abs/1505.00348}{{D}iscrete {H}eisenberg
  group and its automorphism group}, Preprint on ArXiv.org.
\newline\urlprefix\url{https://arxiv.org/abs/1505.00348}

\end{thebibliography}
\end{document}